\providecommand{\U}[1]{\protect\rule{.1in}{.1in}}
\def\div{\mathop{\rm div}\nolimits}
\def\u{\underline}
\def\1{{1\hskip-0.25em{\rm l}}}
\def\p{\partial}
\def\ep{\varepsilon}
\def\div{{\rm div}}
\def\u2{{u^\ep \over \ep^2 }}
\def\u3{{\displaystyle {\bar u}^\ep \over \ep^2 }}
\def\div{{\rm div}}
\def\ep{\varepsilon}
\def\p{\partial}
\def\u2{{u^\ep \over \ep^2 }}
\def\u3{{\displaystyle {\bar u}^\ep \over \ep^2 }}
\def\dsp{\displaystyle}
\def\Cc{{\cal C}}
\newtheorem{theorem}{Theorem}
\newtheorem{corollary}[theorem]{Corollary}
\newtheorem{lemma}[theorem]{Lemma}
\newtheorem{proposition}[theorem]{Proposition}
\newtheorem{remark}[theorem]{Remark}
\begin{document}

\title{Asymptotic analysis of the Poisson-Boltzmann equation describing electrokinetics in  porous media\thanks{The research  was partially supported by the GNR MOMAS CNRS-2439
(Mod\'elisation Math\'ematique et Simulations num\'eriques
 li\'ees aux probl\`emes de gestion des d\'echets nucl\'eaires) (PACEN/CNRS, ANDRA,
 BRGM, CEA, EDF, IRSN) and GNR PARIS (Propri\'et\'es des actinides et des radionucleids aux interfaces et aux solutions). The authors would like to thank O. Bernard, V. Marry, B. Rotenberg et P. Turq  from the Mod\'elisation et Dynamique Multi-\'echelles team from the laboratory Physicochimie des Electrolytes, Collo\"ides et Sciences Analytiques (PECSA), UMR CNRS 7195, Universit\'e P. et M. Curie, for the helpful discussions. G. A. is a member of the DEFI project at INRIA Saclay Ile-de-France.}}

\author{Gr\'egoire Allaire$^1$
 \and Jean-Fran\c{c}ois Dufr\^eche$^2$
 \and Andro Mikeli\'c $^3$ \and      Andrey Piatnitski $^4$
 \\  $^1$ CMAP, Ecole Polytechnique,
F-91128 Palaiseau, France \\
[2pt] $^2$ Universit\'e de Montpellier 2, Laboratoire Mod\'elisation \\ M\'esoscopique et Chimie Th\'eorique (LMCT),\\
Institut de Chimie S\'eparative de Marcoule ICSM
UMR 5257,\\ CEA / CNRS / Universit\'e de Montpellier 2 / ENSCM\\
Centre de Marcoule, B\^at. 426, BP 17171,
30207 Bagnols sur C\`eze Cedex, France \\
[2pt] $^3$ Universit\'e de Lyon, CNRS UMR 5208,\\
  Universit\'e Lyon 1, Institut Camille Jordan, \\   43, blvd. du 11 novembre 1918,
 69622 Villeurbanne Cedex, France\\
[2pt] $^4$ Narvik University College, Norway and  Lebedev Physical Institute, Moscow, Russia  }
\maketitle

\begin{abstract}
We consider the Poisson-Boltzmann equation in a periodic cell, representative of a porous
medium. It is a model for the electrostatic distribution of $N$ chemical species diluted
in a liquid at rest, occupying the pore space with charged solid boundaries.
We study the asymptotic behavior of its solution depending on a parameter $\beta$
which is the square of the ratio between a characteristic pore length and the Debye length.
For small $\beta$ we identify the limit problem which is still a nonlinear Poisson equation
involving only one species with maximal valence, opposite to the average of the given
surface charge density. This result justifies the {\it Donnan effect}, observing that the ions for which the charge is the one of the solid phase are expelled from the pores. For large $\beta$ we prove that the solution behaves like a
boundary layer near the pore walls and is constant far away in the bulk. Our analysis
is valid for Neumann boundary conditions (namely for imposed surface charge densities) and establishes rigorously  that solid interfaces are uncoupled from the bulk fluid, so that the
simplified additive theories, such as the one of the popular Derjaguin, Landau, Verwey and Overbeek
(DLVO) approach, can be used.
We show that the asymptotic behavior is completely different in the case of Dirichlet
boundary conditions (namely for imposed surface potential).
\end{abstract}

\bigskip

{\bf Keywords}:\, Poisson-Boltzmann equation, electro-osmosis, singular perturbations, boundary layers.

\bigskip

{\bf MSC classification}:\, 35B25, 35B40

\bigskip

{\bf PACS}:\, 02.60.Lj, 31.15.-p, 31.15.xp

\section{Introduction}
\label{preintro}
Originally proposed at the dawn of the XX$^\mathrm{th}$ century by Gouy and Chapman \cite{Gouy,Chapman},
the Poisson-Boltzmann (PB) equation is still the corner stone of most of the theoretical descriptions
of electrokinetic phenomena. Many works emphasized the limitation of such a model in
the last decades, though. The ions are only represented by their charge, they do not have any volume,
the correlation are neglected. The molecular nature of the solvent and further
specific forces (such as the London dispersion) are completely
ignored \cite{Dispersion,Clayjardat}. Thus the domain of validity appears to be relatively narrow,
typically in the regime of dilute simple (most of the time monovalent) electrolytes.
Nevertheless, because of its simplicity, most of the theories of equilibrium and transport
in charged diphasic media are still direct generalization of the PB approach. For example, geological
media (such as clays) \cite{Moyne,Trizac}, electrochemistry \cite{Bard,KBA:05}, and colloidal
physics \cite{Lyklema} are still based on the original concepts described by the Poisson-Boltzmann equation.

The success of such an approach is due to several aspects. It justifies the popular Derjaguin, Landau, Verwey and Overbeek
(DLVO) theory
 \cite{Verwey} that explains the stability of charged suspensions.
 In the case of charged porous media,
the Poisson-Boltzmann approach is also particularly significant because it yields the equilibrium electrostatic
properties of the materials and it can be easily coupled to
further equations in order to provide a global model of the system.
Indeed, for the transport properties, the Poisson-Boltzmann equation can be extended in order
to give the Poisson-Nernst-Planck (PNP) formalism which describes
non-equilibrium processes in complex systems \cite{Zheng}. For example, in
the case of clays, the description of electrokinetic processes in the large pores (meso and macroporosities)
can be performed thanks to the PB equation \cite{Alder}. The molecular nature of the system are found to
be important only for micropores (typically for distances less than 2~nm) \cite{Marry}.

In porous media, the PB exhibit two different regimes, depending on the value of the salt concentration or the pore
sizes on the system.
\begin{itemize}
\item If the pore size $L$ is much larger than the Debye length $\lambda_D$ of the electrolyte, the solid charge
is screened by the microscopic ions. Thus, the local charge density is globally zero, but at the interface.
Because of the relatively small value $\lambda_D$, this case corresponds to numerous applications.
The solid interfaces are uncoupled so that the DLVO approach is valid. Far away from the interface,
the coulombic forces can be modelled by effective parameters, such as the effective charge \cite{Eff}
or the zeta potential.
\item Conversely, if the pore size $L$ is much smaller than $\lambda_D$, the charge of the solid surface is not screened. It means that
the resulting electrostatic force is important anywhere in the material. Ions for which the charge is
the same than the one of the solid phase are expelled from the material (Donnan effect) \cite{Clayjardat}.
The electro-osmotic flux becomes especially important. This case is significant because it corresponds to
nanoporosities at low salt concentration.
\end{itemize}
The two asymptotic limits can be taken into account thanks to the coupling parameter $\beta=(L/\lambda_D)^2$.
The large pore size domain (large $\beta$) corresponds to most of the porous systems. Nevertheless, many
porous materials exhibit microscopic pores for which the opposite limit (small $\beta$) is
relevant. For example, montmorillonite clays have different porosities, and the smallest ones, which are
obtained at very low hydration, are even of the order of molecular distances.

The goal of the present paper is to give a rigorous mathematical analysis of these two 
opposite asymptotic limits. The paper is organized as follows. Section \ref{introd} introduces 
the model and defines the relevant reduced units. Section \ref{PerioPB} describes precisely 
the geometry of the porous cell and discusses the issue of the existence and uniqueness of 
the solution to PB equation. Section \ref{small} studies the limit case of very small pores, 
i.e., when $\beta$ goes to zero. Section \ref{Lb} is concerned with the opposite situation 
of very large pores, when $\beta$ goes to infinity. Eventually Section \ref{dirichlet} 
investigates the case of Dirichlet boundary conditions (namely for imposed surface potential) 
instead of Neumann boundary condition (namely imposed surface charge). 
A brief description of our main results is given in the next section after introducing 
the necessary notations.

\section{The model and our main results}
\label{introd}

We consider the Poisson-Boltzmann system which describes the electrostatic distribution
of $N$ chemical species diluted in a liquid at rest, occupying a porous medium with
charged solid boundaries.
The electrostatic potential $\Psi^*$ is calculated from the Poisson equation
\begin{gather}
\label{poisson}
\mathcal{E} \Delta \Psi^* = - e \sum^N_{j=1} z_j n_j^* \quad \mbox{ in the bulk}  ,
\end{gather}
where $\mathcal{E} = \mathcal{E}_0 \mathcal{E}_r$ is the dielectric constant
of the solvent, $e$ is the electron charge and $n_j^*$, $1\leq j\leq N$, are
the species concentrations. Since the pore walls are charged, the corresponding
boundary condition is of Neumann type
\begin{gather}
\label{poissonbc}
\mathcal{E} \nabla \Psi^* \cdot {\bf n} = -\sigma^* \quad \mbox{ on the surface} ,
\end{gather}
where $\sigma^*$ is a given surface charge and ${\bf n}$ is the unit exterior normal.

At equilibrium the species concentrations $n_j^*$ are given by the Boltzmann distribution
which corresponds to a balance between the chemical potential and the electrical
field
\begin{gather}
\label{boltzmann}
\nabla (k_B T \ln n_j^*) = - \nabla (z_j e \Psi^*) .
\end{gather}
where $z_j$ is the valence of the $j$-th species, $k_B$ the Boltzmann constant
and $T$ the temperature. It follows from (\ref{boltzmann}) that there exist
positive constants $n_j^*(\infty)$ (called infinite dilution concentrations)
such that
\begin{equation}
\label{boltzmann2}
n_j^* =  n_j^*(\infty) \exp \left\{ - \frac{z_j e \Psi^*}{k_B T}  \right\} .
\end{equation}
The Poisson-Boltzmann system is the combination of (\ref{poisson}) and (\ref{boltzmann2}),
together with the boundary condition (\ref{poissonbc}).

In order to make an asymptotic analysis of the Poisson-Boltzmann system, we first
adimensionalize equations (\ref{poisson}), (\ref{poissonbc}), (\ref{boltzmann2}).
We denote by $L$ the characteristic pore size and by $n_c$ the characteristic concentration.
We introduce the Debye length defined by
$$
\lambda_D = \sqrt{\frac{\mathcal{E} k_B T}{ e^2 n_c}},
$$
and define a parameter
\begin{equation}
\label{def.beta}
\beta = (\frac{L}{\lambda_D})^2 .
\end{equation}
The parameter $\beta$ is the fundamental physical characteristic which drives the transport properties of an electrolyte solution in a porous media.
For large $\beta$ the electrical potential is concentrated in a diffuse layer next to the liquid/solid interface. Co-ions, for which the charge is the same as the one of the solid phase are able to go everywhere in the pores because the repelling electrostatic force of the solid phase is screened by the counterions. The electrostatic phenomena are mainly surfacic, and the interfaces are globally independent.  For small $\beta$, co-ions do not have access to the very small pores (Donnan effect). The local electroneutrality condition is not valid anymore and the electric fields of the solid interfaces are coupled.

Furthermore, we define a characteristic surface charge density $\sigma_c$ by
$$
\sigma_c = \frac{\mathcal{E} k_B T}{e L} ,
$$
and adimensionalized quantities
$$
\sigma = \frac{\sigma^*}{\sigma_c} , \quad
\Psi = \frac{e\Psi^*}{k_B T} , \quad n_j = \frac{n_j^*}{n_c}  ,
\quad n_j^0(\infty) = \frac{n_j^*(\infty)}{n_c} .
$$
Rescaling the space variable $y=x/L$,
this yields the adimensionalized Poisson-Boltzmann system
\begin{equation}
\label{PB1}
\Delta_y \Psi = - \beta \sum^N_{j=1} z_j n_j(\Psi) \quad \mbox{ with } \
n_j(\Psi) =  n_j^0(\infty) e^{ - z_j \Psi } \ \mbox{ in the bulk, }
\end{equation}
and
\begin{equation}
\label{PB2}
\nabla_y \Psi \cdot {\bf n} = -\sigma \quad \mbox{ on the surface.}
\end{equation}
The goal of the present paper is to study the limit of equations
(\ref{PB1}) and (\ref{PB2}) when the parameter $\beta$ is either
very small or very large. 
Section \ref{PBE} gives a precise mathematical framework for
the Poisson-Boltzmann system.
To discuss our results we sort the valences by increasing order
and we assume that there are both anions and cations
$$
z_1 < z_2 < ... < z_N \quad \mbox{ and } \quad z_1<0<z_N .
$$
Section \ref{small} is devoted to the asymptotic analysis of (\ref{PB1})-(\ref{PB2})
when $\beta$ goes to zero. This case corresponds to very small pores,
$L<<\lambda_D$. In view of the definition of the Debye length
$\lambda_D$, a small value of $\beta$ corresponds also to a
small characteristic concentration $n_c$. The asymptotic
regime depends on the sign of the averaged charge $\int_S \sigma \, dS$.
If it is negative (which means that the surface is positively charged),
then Theorem \ref{th1} states that only the anion with the most negative
valence ($z_1$) is important and that the potential behaves as
$$
\Psi \approx \frac{\log\beta}{z_1} + \varphi_0 ,
$$
where $\varphi_0$ is the solution of the reduced system, involving
only the species $1$,
$$
\left\{ \begin{array}{l}
\dsp \Delta \varphi_0 = - z_1 n^0_1(\infty) e^{-z_1 \varphi_0} \ \mbox{ in the bulk, }  \\
\dsp \nabla \varphi_0 \cdot {\bf n} = -\sigma \ \mbox{ on  the surface.}
\end{array} \right.
$$
Note that the constant $\log\beta /z_1$ is going to $+\infty$ since $z_1<0$:
it is a manifestation of the singularly perturbed character of the asymptotic
analysis.
As a consequence, the cation concentrations goes to zero while the ion
concentrations blow up as $n_j= O(\beta^{-z_j/z_1})$ and $n_1>>n_j$ for
$j\neq1$. Of course, a symmetric behavior (involving only the species
with most positive valence $z_N$) holds true when the averaged charge
is positive. On the other hand, when the averaged charge is zero, then
the limit problem is much simpler (there is no singular perturbation)
and it is given by Theorem \ref{th2}.

Section \ref{Lb} is concerned with the opposite situation when
$\beta$ goes to infinity. This case corresponds to very large pores,
$L>>\lambda_D$ or to large values of the characteristic concentration $n_c$.
Such a situation is well understood in the physical and mathematical literature:
it gives rise to a boundary layer (the so-called Debye's layer) close to the surface.
For example, it has been analyzed in \cite{BCEJ:97}, \cite{PJ:97} but the
analysis is restricted to 1-d or similar simplified geometry. Our main
contribution, Theorem \ref{estlargebet}, derives this boundary layer
in a general geometric setting and gives a rigorous error estimate.
Locally, close to the surface, the potential behaves as
$$
\Psi(y) \approx \frac{-\sigma}{\sqrt{\beta \sum^N_{j=1} z_j^2 n_j^0(\infty)} }
\exp \left\{ - d(y) \sqrt{\beta \sum^N_{j=1} z_j^2 n_j^0(\infty)} \right\}  ,
$$
where $d(y)$ is the distance between the point $y$ and the surface.
Away from the surface, the concentrations $n_j$ are constant and
satisfy the so-called bulk electroneutrality condition.
Some numerical simulations for varying $\beta$ can be found in 
our other work \cite{ABDMP}.

In the literature, one may find instead of the Neumann boundary
condition (\ref{poissonbc}) a Dirichlet one
\begin{equation}
\label{PB3}
\Psi = \zeta \quad \mbox{ on the surface,}
\end{equation}
where $\zeta$ is an imposed potential.
Eventually Section \ref{dirichlet} investigates the case when
the Neumann boundary condition (\ref{poissonbc}) is replaced
by the Dirichlet boundary condition (\ref{PB3}). Lemma \ref{Dir.small}
gives the asymptotic behavior for small $\beta$: the analysis is
quite obvious since there is no more singular perturbation.
The potential now behaves as
$$
\Psi \approx \zeta + \beta\Psi_1 ,
$$
where $\Psi_1$ is the solution of the corrector problem
$$
\left\{ \begin{array}{l}
\dsp \Delta \Psi_1 = -\sum^N_{j=1} z_j n_j^0(\infty) e^{ - z_j \zeta } \ \mbox{ in the bulk, }  \\
\dsp \Psi_1 = 0 \ \mbox{ on  the surface.}
\end{array} \right.
$$
Theorem \ref{estlargebetZ} gives the asymptotic behavior for large
$\beta$. It is again a boundary layer but with a totally different
profile. More precisely we establish
$$
\Psi(y) \approx \Psi_{0,\zeta} \Big( \sqrt\beta d(y) \Big)
$$
where $d(y)$ is the distance between the point $y$ and the surface
and $\Psi_{0,\zeta}$ is the solution of the nonlinear ordinary differential
equation (\ref{Cauchy}), solution which, starting from the boundary value
$\zeta$ on the surface, is exponentially decaying at infinity.

\section{Geometry and existence theory for the Poisson-Boltzmann equation}
\label{PerioPB}

In this section we define the geometry and recall a classical result on the
existence and uniqueness of the solution of the Poisson-Boltzmann equation.
Furthermore we establish $L^\infty$-bounds for the solution.

\subsection{Geometry of the porous cell}
\label{Geo}

To simplify the presentation we consider a sample of a porous medium
which occupies the periodic unit cell $Y = (0,1)^d$ (identified with
the unit torus $\mathbb T^d$). The space dimension is typically $d=2,3$.
The pore space $Y_F$, containing the electrolyte, is a $1$-periodic smooth
connected open subset of $Y$. More precisely, we consider a smooth partition
$Y=Y_S\cup Y_F$ where $Y_S$ is the solid part and $Y_F$ is the fluid part.
The liquid/solid interface is $S=\partial Y_S \setminus \p Y$.
It is known that for a general $Y_F$ with non-empty boundary $S$,
the distance function $d$, defined by $d(y)= $ dist $(y, S)$, $y\in Y_F$,
is uniformly Lipschitz continuous. Let $Y_F^\mu = \{ y\in Y_F \ : \, d(y) < \mu \}$.
If we assume $S$ to be smooth of class $C^3$, then, for sufficiently small $\mu$,
the distance function
has the same regularity, $d\in C^3 (Y_F^\mu)$. Furthermore, by the Collar Neighborhood
Theorem (see e.g. \cite{spivak}), there exists a tubular neighborhoods $U$,
$S \subset U$, isomorphic to $S \times I$, for $I\subset \mathbb{R}$ an open interval.

For $y_0\in S$, let $\mathbf{n} (y_0)$ and $\mathbf{T} (y_0)$ denote respectively
the unit normal to $S$ at $y_0$ (exterior to $Y_F$) and the tangent hyperplane to $S$ at $y_0$.
Without loosing generality, we can suppose that in some neighborhood
$\mathcal{N} = \mathcal{N}(y_0)$ of $y_0$, $S$ is then given by $y_d = \gamma(y')$,
where $y'= (y_1,\dots, y_{d-1} )$, $\gamma\in C^3 (\mathbf{T} (y_0) \cap \mathcal{N})$
and $\nabla_{y'} \gamma (y'_0 )=0$. The unit normal on $S$, at the point $y= (y', \gamma(y'))$
is
$$
\mathbf{n} (y)=\frac{(\nabla_{y'} \gamma (y'), -1)}{\sqrt{1+| \nabla_{y'} \gamma (y')|^2 }} .
$$
Here $Y_S$ corresponds to $y_d > \gamma (y')$.

Following \cite{GT}, page 355, for each point $y \in Y_F^\mu $, there exists a unique point
$z=z(y) \in S$ such that $|y -z| = d(y)$. The points $y$ and its closest point $z\in S$ are
related by
\begin{equation}\label{close}
y = z - \mathbf{n} (z) d .
\end{equation}
By the Inverse Mapping Theorem applied in a neighborhood of $y_0\in S$
($S$ can be covered with a finite number of such neighbourhoods),
(\ref{close}) defines uniquely a principal coordinate system
$(q',q_d)$ with $q'=z'$ and $q_d=d(y)$ which are $C^2$ functions of $y$.
The first coordinates $q'$ are the tangential coordinates to $S$ while
the last coordinate $q_d$ is the signed distance to $S$ taken with the sign $+$
in the interior of $Y_F$ and with $-$ in its exterior. We denote by $q_d=0$
the local representation of the boundary $S$ in the neighborhood of $y_0$.
Of course, the distance function $d\in C^3$ satisfies
$\nabla d(y) = -\mathbf{n} (q'(y))$ and $|\nabla d(y)| =1$.
Furthermore, the tangential coordinates $q'= (q_1,\dots, q_{d-1})$ can be
chosen in such a way that the Hessian matrix $[D^2 \gamma (q' (y_0))]$
is diagonal at $y_0$.
For more details we refer to \cite{GT}, pages 354-356.

\subsection{Mathematical framework for the Poisson-Boltzmann equation}
\label{PBE}

We start from the adimensionalized Poisson-Boltzmann system (\ref{PB1})
and (\ref{PB2}) (where we drop the index $y$ for the spatial derivatives)
that we complement with periodic boundary conditions on the outer boundary
of the unit cell $Y$. More precisely, the potential $\Psi$ is a solution
of the Poisson-Boltzmann equation
\begin{equation}
\label{BP1}
\left\{ \begin{array}{l}
        \dsp - \Delta \Psi + \beta  \Phi ( \Psi  ) =0  \ \mbox{ in } \ Y_F , \\
        \dsp   \nabla \Psi \cdot \mathbf{n} = -\sigma \ \mbox{ on } \, S , \\
 \Psi \; \mbox{ is } 1-\mbox{periodic},
      \end{array}
    \right.
\end{equation}
where $\sigma(y)$ is the adimensionalized charge distribution on the pore surface,
$\beta>0$ is a parameter defined by (\ref{def.beta}) as the square of the ratio of
the pore size and the Debye length,
and $\Phi(\Psi)$ is the bulk density of charges, a nonlinear function defined by
\begin{equation}\label{BPPsi}
     \Phi (\Psi) = -  \sum_{j=1}^N z_j n_j(\Psi)
\end{equation}
where $n_j$ is the concentration of species $j$ given, at equilibrium, as a function
of the potential
\begin{equation}\label{BPPsi2}
n_j (\Psi) =  n_j^0(\infty) e^{ - z_j \Psi  }
\end{equation}
with a constant $n_j^0(\infty) > 0$.
We assume that all valencies $z_j$ are different. If not, we lump together
different ions with the same valence. Of course, for physical reasons,
all valencies $z_j$ are integers. We rank them by increasing order,
$$
z_1 < z_2 < ... < z_N ,
$$
and we denote by $j^+$ and $j^-$ the sets of positive and negative valences.
We also assume that they are both anions and cations, namely positive and negative
valences, i.e., $z_1<0<z_N$. The function $\Phi$ is monotone since it is the
derivative of a convex function
\begin{equation}\label{conv}
\Phi (\Psi) = \Cc^\prime(\Psi) \quad \mbox{ with } \quad
\Cc(\Psi) = \sum_{j=1}^N n_j^0(\infty) e^{- z_j \Psi} .
\end{equation}
The boundary value problem (\ref{BP1}) is equivalent to the following minimization problem:
\begin{equation}
\label{Minpsi}
\inf_{\varphi \in H^1_\#(Y_F)} \left\{ J(\varphi) =
\frac{1}{2} \int_{Y_F} |\nabla \varphi|^2 \, dy + \beta \int_{Y_F} \Cc(\phi) \, dy
+ \int_{S} \sigma \varphi \, dS \right\} ,
\end{equation}
with $H^1_\#(Y_F) = \{ \varphi \in H^1(Y_F) , \; \varphi \; \mbox{is } \, 1-\mbox{periodic} \}$.
The functional $J$ is strictly convex, which gives the uniqueness of the minimizer. Nevertheless, for arbitrary non-negative $\beta , n_j^c$, $J$ may be not coercive on $H^1_\#(Y_F)$ if all $z_j$'s have the same sign. Therefore, we must put a condition on the $z_j$'s so that the minimization problem (\ref{Minpsi}) admits a solution. Following the literature, we impose the bulk electroneutrality condition
\begin{equation}\label{Neutrality}
- \Phi (0) = \sum_{j=1}^N z_j n^0_j(\infty) =0,
\end{equation}
which guarantees that for $\sigma=0$, the unique solution of (\ref{BP1}) is $\Psi =0$.
Under assumption (\ref{Neutrality}) it is easy to see that $J$ is coercive on $H^1_\#(Y_F)$.

\begin{remark}
The bulk electroneutrality condition (\ref{Neutrality}) is not
a restriction. Actually all our results hold under
the much weaker assumption that all valences $z_j$
do not have the same sign. Indeed, if (\ref{Neutrality}) is
not satisfied, we can make a change of variables in the
Poisson-Boltzmann equation (\ref{BP1}), defining a new
potential $\tilde\Psi=\Psi+\Psi^0$ where $\Psi^0$ is a
constant reference potential. Since the function $\Phi(\Psi^0)$
is continuous and admits the following limits at infinity
$$
\lim_{\Psi^0\to\pm\infty} \Phi(\Psi^0) = \pm\infty ,
$$
there exists at least one value $\Psi^0$ such that $\Phi(\Psi^0) =0$.
This change of variables for the potential leaves (\ref{BP1})
invariant if we change the constants $n_j^0 (\infty)$ in new
constants $\tilde n_j^{0} (\infty) =  n_j^0 (\infty) e^{ -  z_j \Psi^0  }$.
These new constants satisfy the bulk electroneutrality condition (\ref{Neutrality}).
\end{remark}

In the sequel we assume that the charge density is a continuous periodic function
\begin{equation}
\label{Bds1}
\sigma (y)\in C_\# (S) .
\end{equation}
The well-posedness of the boundary value problem (\ref{BP1}) is a classical
result.

\begin{lemma}[\cite{L:06}]
\label{lem.pb}
Assume (\ref{Bds1}) and that the electroneutrality condition (\ref{Neutrality}) holds true.
Then problem (\ref{Minpsi}) has a unique solution $\Psi \in H^1_\#(Y_F)$ such that
$$
\sum_{j=1}^N z_j e^{- z_j \Psi}  \; \mbox{and} \; \Psi \sum_{j=1}^N z_j e^{- z_j \Psi }
$$
are absolutely integrable.
\end{lemma}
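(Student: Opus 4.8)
The plan is to obtain $\Psi$ as the unique minimizer of the functional $J$ of the minimization problem (\ref{Minpsi}) by the direct method of the calculus of variations, and then to read off the two integrability statements from the minimality relation. First I would check that $J\colon H^1_\#(Y_F)\to(-\infty,+\infty]$ is proper, since $J(0)=\beta\,|Y_F|\,\Cc(0)<\infty$ by (\ref{Bds1}) and (\ref{conv}); that it is strictly convex, the strict convexity coming from the term $\beta\int_{Y_F}\Cc(\varphi)\,dy$ because $\Cc$ in (\ref{conv}) satisfies $\Cc''>0$; and that it is sequentially weakly lower semicontinuous on $H^1_\#(Y_F)$, the Dirichlet term being weakly lower semicontinuous, the term $\int_{Y_F}\Cc(\varphi)\,dy$ being controlled by Fatou's lemma after passing to an a.e.\ convergent subsequence (here $\Cc\geq 0$), and the boundary term $\int_S\sigma\varphi\,dS$ being even weakly continuous because the trace map $H^1(Y_F)\to L^2(S)$ is compact.

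The key analytic point, and the place where the electroneutrality condition (\ref{Neutrality}) is used, is the coercivity of $J$. I would split $\varphi=\bar\varphi+\tilde\varphi$ with $\bar\varphi=\frac{1}{|Y_F|}\int_{Y_F}\varphi\,dy$; the Poincar\'e--Wirtinger inequality on the connected cell $Y_F$ bounds $\|\tilde\varphi\|_{L^2(Y_F)}$, hence also $\|\tilde\varphi\|_{L^2(S)}$ via the trace inequality, by $\|\nabla\varphi\|_{L^2(Y_F)}$, so that $\int_S\sigma\tilde\varphi\,dS$ is absorbed into $\tfrac14\|\nabla\varphi\|_{L^2(Y_F)}^2+C$. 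Jensen's inequality for the convex function $\Cc$ gives $\int_{Y_F}\Cc(\varphi)\,dy\geq |Y_F|\,\Cc(\bar\varphi)$, and because $z_1<0<z_N$ one has $\Cc(s)\geq n_1^0(\infty)e^{-z_1 s}+n_N^0(\infty)e^{-z_N s}$, which grows faster than $s^2$ as $|s|\to\infty$ on both sides. Altogether
\[
J(\varphi)\ \geq\ \tfrac14\|\nabla\varphi\|_{L^2(Y_F)}^2+\beta\,|Y_F|\,\Cc(\bar\varphi)+\bar\varphi\int_S\sigma\,dS-C\ \geq\ c\,\|\varphi\|_{H^1(Y_F)}^2-C',
\]
the linear term in $\bar\varphi$ being dominated by $\Cc(\bar\varphi)$. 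Coercivity together with weak lower semicontinuity gives a minimizer $\Psi$ along any minimizing sequence, and strict convexity makes it unique; for $\sigma=0$ it is $\Psi=0$ by (\ref{Neutrality}).

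It remains to extract the integrability. From $J(\Psi)<\infty$ we get $\Cc(\Psi)\in L^1(Y_F)$, and since all summands in (\ref{conv}) are nonnegative, each $e^{-z_j\Psi}\in L^1(Y_F)$, so $\sum_j z_j e^{-z_j\Psi}$ is absolutely integrable. Next I would establish the weak form of (\ref{BP1}) for bounded test functions: for $\phi\in H^1_\#(Y_F)\cap L^\infty(Y_F)$ the convex function $t\mapsto J(\Psi+t\phi)$ is finite and differentiable near $0$ — differentiation under the integral being justified by the domination $|\Cc'(\Psi+t\phi)\phi|\leq\|\phi\|_\infty\sum_j n_j^0(\infty)|z_j|\,e^{|z_j|\|\phi\|_\infty}\,e^{-z_j\Psi}\in L^1(Y_F)$ for $|t|\leq 1$ — so minimality yields
\[
\int_{Y_F}\nabla\Psi\cdot\nabla\phi\,dy+\beta\int_{Y_F}\Cc'(\Psi)\,\phi\,dy+\int_S\sigma\,\phi\,dS=0 .
\]
Taking $\phi\equiv 1$ already gives the global balance $\beta\int_{Y_F}\Cc'(\Psi)\,dy=-\int_S\sigma\,dS$.

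Finally, for the second integrability I would take $\phi=T_k(\Psi):=\max(-k,\min(k,\Psi))\in H^1_\#(Y_F)\cap L^\infty(Y_F)$ in the weak formulation. Then $\nabla\Psi\cdot\nabla T_k(\Psi)=|\nabla T_k(\Psi)|^2\geq 0$, the boundary term $\int_S\sigma\,T_k(\Psi)\,dS$ is bounded uniformly in $k$ because $\Psi|_S\in L^2(S)$ and $\sigma\in L^\infty(S)$, hence $\beta\int_{Y_F}\Cc'(\Psi)\,T_k(\Psi)\,dy\leq C$ uniformly in $k$. Since $\Cc'(0)=0$ — which is again (\ref{Neutrality}) — the functions $\Cc'(\Psi)$ and $T_k(\Psi)$ share the sign of $\Psi$, so $0\leq\Cc'(\Psi)\,T_k(\Psi)\nearrow \Psi\,\Cc'(\Psi)$ pointwise, and the monotone convergence theorem gives $\Psi\,\Cc'(\Psi)=-\,\Psi\sum_j z_j n_j^0(\infty)\,e^{-z_j\Psi}\in L^1(Y_F)$; hence $\Psi\sum_j z_j e^{-z_j\Psi}$ is absolutely integrable. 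The step I expect to be the main obstacle is precisely this last one: one would like to test the equation directly with $\phi=\Psi$, but $\Psi$ need not be bounded (the $L^\infty$-bound is only obtained afterwards), so the truncation and monotone-convergence argument, which relies crucially on the sign structure furnished by the electroneutrality condition (\ref{Neutrality}), is the way around it; a secondary difficulty is the coercivity of $J$, which genuinely fails when all valences have the same sign.
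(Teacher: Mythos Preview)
Your argument is correct. The paper does not actually supply a proof of this lemma: it merely cites \cite{L:06} and, in the preceding paragraph, records informally that $J$ is strictly convex and becomes coercive on $H^1_\#(Y_F)$ once the electroneutrality condition (\ref{Neutrality}) is imposed. Your write-up fills in exactly those details along the expected lines --- direct method, Poincar\'e--Wirtinger plus Jensen for coercivity, Fatou for lower semicontinuity --- and then handles carefully the one point the paper's discussion leaves implicit, namely the integrability of $\Psi\,\Phi(\Psi)$. Your truncation $\phi=T_k(\Psi)$ together with the sign structure $\Phi(\Psi)T_k(\Psi)\geq 0$ coming from $\Phi(0)=0$ and the monotone convergence theorem is precisely the right device here, since the $L^\infty$-bound of Proposition~\ref{Linft} is established only \emph{after} this lemma and therefore cannot be invoked.

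In short: there is nothing to compare against in the paper itself, and your proof is a clean and complete version of the standard argument the citation stands in for.
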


The issue of the solution boundedness was not correctly addressed in \cite{L:06},
where it was merely proved that $\Psi \in L^p (Y_F)$ for every finite $p$.
By using elementary comparison arguments
we will prove a $L^\infty$-estimate in Proposition \ref{Linft} 
(a similar result is also proved in \cite{EJL}).
To this end we introduce the following auxiliary Neumann problem
\begin{equation}
\label{BU1}
\left\{  \begin{array}{ll}
        \dsp - \Delta U =\frac{1}{| Y_F |} \int_S \sigma \ dS \ \mbox{ in } \ Y_F , &  \\
        \dsp   \nabla U  \cdot \mathbf{n} = - \sigma\ \mbox{ on } \, S ,& \\
 U \; \mbox{ is } 1-\mbox{periodic} , \quad \int_{Y_F}  U (y) \ dy =0.&
      \end{array}
    \right.
\end{equation}
Remark that (\ref{BU1}) admits a solution $U\in H^1_\#(Y_F)$ since the
bulk and surface source terms are in equilibrium. Furthermore, the zero
average condition of the solution gives its uniqueness. Under
condition (\ref{Bds1}) it is known that $U$ is continuous and achieves its
minimum and maximum in $\overline{Y_F}$.

Then our $L^\infty$-bound reads as follows
\begin{proposition}
\label{Linft}
The solution $\Psi$ of (\ref{BP1}) satisfies the following bounds
\begin{gather}
U(y) - U_m - \frac{1}{z_1} \log \max \left( 1 , \,
\frac{\overline{\sigma}}{\beta z_1 n^0_1(\infty)}
- \sum_{j\in j^+} \frac{z_j n^0_j(\infty)}{z_1 n^0_1(\infty)}  \right)
\geq \Psi (y)  \geq  \notag \\
U (y) - U_M  - \frac{1}{z_N} \log \max \left( 1 , \,
\frac{\overline{\sigma}}{\beta z_N n^0_N(\infty)} -
\sum_{j\in j^-} \frac{z_j n^0_j(\infty)}{z_N n^0_N(\infty)}  \right) ,
\label{Linfty}
\end{gather}
where the sets $j^+$ and $j^-$ denote the sets of positive and negative valences, respectively,
and
$$
\overline{\sigma}= \frac{1}{| Y_F |} \int_S \sigma \ dS \ , \quad
U_m =\min_{y\in \overline{Y_F}} U(y) \ \mbox{ and } \quad   U_M =\max_{y\in \overline{Y_F}} U(y) .
$$
\end{proposition}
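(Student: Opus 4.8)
The plan is to squeeze $\Psi$ between two explicit comparison functions built from the linear profile $U$ of (\ref{BU1}). For constants $K_1,K_N\ge 0$ to be determined, set
\begin{equation*}
\overline{v}(y) = U(y) - U_m + K_1 , \qquad \underline{v}(y) = U(y) - U_M - K_N .
\end{equation*}
Since $U_m\le U\le U_M$ on $\overline{Y_F}$ we have $\overline{v}\ge K_1\ge 0$ and $\underline{v}\le -K_N\le 0$; both functions are bounded, $1$-periodic, and by (\ref{BU1}) satisfy $-\Delta\overline{v}=-\Delta\underline{v}=\overline{\sigma}$ in $Y_F$ together with the same Neumann data $\nabla(\cdot)\cdot\mathbf{n}=-\sigma$ on $S$ as $\Psi$. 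The two bounds in (\ref{Linfty}) amount to $\Psi\le\overline{v}$ and $\Psi\ge\underline{v}$.

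Next I would prove the relevant comparison principle for (\ref{BP1}). If $v\in H^1_\#(Y_F)\cap L^\infty(Y_F)$ has $-\Delta v=\overline{\sigma}$ in $Y_F$, $\nabla v\cdot\mathbf{n}=-\sigma$ on $S$, and is a supersolution in the sense that $\beta\sum_{j=1}^N z_j n^0_j(\infty)e^{-z_j v}\le\overline{\sigma}$ pointwise, then $\Psi\le v$; the reversed inequality makes $v$ a subsolution and gives $\Psi\ge v$. For the first assertion, subtract the weak formulation of (\ref{BP1}) from that of the supersolution inequality and test with $(\Psi-v)^+\in H^1_\#(Y_F)$; the boundary term disappears because $\nabla(\Psi-v)\cdot\mathbf{n}=0$ on $S$, leaving
\begin{equation*}
\int_{Y_F}|\nabla(\Psi-v)^+|^2\,dy + \beta\int_{Y_F}\big(\Phi(\Psi)-\Phi(v)\big)(\Psi-v)^+\,dy \le 0 .
\end{equation*}
By (\ref{conv}), $\Phi=\Cc'$ with $\Cc''(\Psi)=\sum_j z_j^2 n^0_j(\infty)e^{-z_j\Psi}>0$, so $\Phi$ is strictly increasing and the second integral is nonnegative; hence $(\Psi-v)^+$ is constant, and it cannot be a positive constant (that would force the second integral to be strictly positive). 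Thus $(\Psi-v)^+\equiv 0$.

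The one point that is not completely routine is that $(\Psi-v)^+$, although in $H^1_\#(Y_F)$, is not known a priori to be bounded, so the nonlinear term must be justified; I expect this to be the main obstacle. It can be dealt with by first testing with the bounded truncations $\min\big((\Psi-v)^+,k\big)$ and then letting $k\to\infty$: on $\{\Psi>v\}$ one has $\Psi>\min v>-\infty$, and there $\Phi(\Psi)(\Psi-v)^+$ is dominated using the absolute integrability of $\sum_j z_j e^{-z_j\Psi}$ and $\Psi\sum_j z_j e^{-z_j\Psi}$ provided by Lemma \ref{lem.pb}, while $\Phi(v)$ is bounded; the gradient term only improves under truncation.

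It remains to pick $K_1,K_N$. For $\overline{v}$, the supersolution condition $\beta\sum_j z_j n^0_j(\infty)e^{-z_j\overline{v}}\le\overline{\sigma}$ is checked pointwise using $\overline{v}\ge K_1\ge 0$: terms with $z_j>0$ satisfy $z_j n^0_j(\infty)e^{-z_j\overline{v}}\le z_j n^0_j(\infty)$; terms with $z_j<0$ and $j\neq 1$ are negative and are dropped; and the $j=1$ term, decreasing in $\overline{v}$, is at most $z_1 n^0_1(\infty)e^{-z_1 K_1}$. Hence it suffices that $z_1 n^0_1(\infty)e^{-z_1 K_1}+\sum_{j\in j^+}z_j n^0_j(\infty)\le\overline{\sigma}/\beta$; dividing by the negative quantity $z_1 n^0_1(\infty)$, combining with $e^{-z_1K_1}\ge 1$ and taking logarithms gives precisely $K_1=-\frac{1}{z_1}\log\max\big(1,\,\frac{\overline{\sigma}}{\beta z_1 n^0_1(\infty)}-\sum_{j\in j^+}\frac{z_j n^0_j(\infty)}{z_1 n^0_1(\infty)}\big)$, so $\overline{v}$ is exactly the upper bound in (\ref{Linfty}). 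Symmetrically, for $\underline{v}$ one uses $\underline{v}\le -K_N\le 0$, keeps the $j=N$ term, drops the positive-valence terms with $j\neq N$, bounds the negative-valence terms below by their values at $\underline{v}=0$, and solves for $K_N=\frac{1}{z_N}\log\max\big(1,\,\frac{\overline{\sigma}}{\beta z_N n^0_N(\infty)}-\sum_{j\in j^-}\frac{z_j n^0_j(\infty)}{z_N n^0_N(\infty)}\big)$, which is the lower bound in (\ref{Linfty}). The structural monotonicity (\ref{conv}) of $\Phi$ and the matching of the Neumann data are exactly what make the two comparisons go through.
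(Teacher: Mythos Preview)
Your proposal is correct and follows essentially the same route as the paper: both arguments test the weak formulation of $\Psi-U$ against $(\Psi-U\mp C)^\pm$, use the strict monotonicity of $\Phi$ to make the nonlinear term nonnegative, and then choose the constant $C$ (your $-U_m+K_1$ and $U_M+K_N$) so that the remaining zero-order term has the right sign, with the same one-sided estimates on the exponential sums. The paper handles the integrability of $\Phi(\Psi)(\Psi-v)^+$ by a direct appeal to Lemma~\ref{lem.pb} rather than your truncation, but otherwise the mechanics and the resulting constants coincide.
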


\begin{proof}
We use the variational formulation for $\Psi- U$, which reads, for any
smooth 1-periodic function $\varphi$,
\begin{gather}
\int_{Y_F} \nabla (\Psi - U) \cdot\nabla \varphi  \ dy +
\beta \int_{Y_F} \Phi (\Psi) \varphi \ dy  + \overline{\sigma} \int_{Y_F} \varphi \ dy =0  .
\label{VVP}
\end{gather}
We take $\varphi (y) = (\Psi (y) -U(y) + C)^-$, where $C$ is a constant to be determined
and, as usual, the function $f^-=\min(f,0)$ is the negative part of $f$. We note that
by Lemma  \ref{lem.pb}, $\varphi \in H^1_\#(Y_F)$ and $\Phi (\Psi) \varphi$ is integrable. It yields
\begin{gather*}
\int_{Y_F} | \nabla \varphi |^2 \, dy + \beta \int_{Y_F}
\left( \Phi (\Psi) - \Phi (U-C) \right) (\Psi - U + C)^- \, dy  \\
+ \int_{Y_F} \left( \beta \Phi (U -C) + \overline{\sigma} \right) \varphi \, dy =0.
\end{gather*}
By monotonicity of $\Phi$ the second term is nonnegative. Hence we should choose $C$ in
such a way that the coefficient in front of $\varphi$ in the third term is nonpositive.
If $C\geq U_M$, we have
\begin{gather*}
\beta \Phi (U -C) +  \overline{\sigma} \leq -\beta z_N n^0_N(\infty) e^{z_N (C - U_M )}
- \beta \sum_{j\in j^-} z_j n^0_j(\infty)  + \overline{\sigma} .
\end{gather*}
Thus, if it happens that
\begin{gather}
-\beta \sum_{j\in j^-} z_j n^0_j(\infty)  + \overline{\sigma} < \beta z_N n^0_N(\infty) ,
\label{condC}
\end{gather}
we indeed take $C=U_M$ and the corresponding term is nonpositive.
If (\ref{condC}) is not true, then our choice is
$$
C= U_M + \frac{1}{z_N} \log \left( \frac{\overline{\sigma}}{\beta z_N n^0_N(\infty)}
- \sum_{j\in j^-} \frac{z_j n^0_j(\infty)}{z_N n^0_N(\infty)} \right) \geq U_M .
$$
This choice of the constant $C$ implies that $\varphi (y) =0$ and it yields
the lower bound in (\ref{Linfty}).

Let us switch to the upper bound.
We now take $\varphi (y) = (\Psi (y) -U(y) -C)^+$, where $C$ is another constant
to be determined. We note that by Lemma  \ref{lem.pb}, $\varphi \in H^1_\#(Y_F)$ and
$\Phi (\Psi) \varphi$ is integrable. It yields
\begin{gather*}
\int_{Y_F} | \nabla \varphi |^2 \ dy + \beta \int_{Y_F}
( \Phi (\Psi) - \Phi (U+C) ) (\Psi - U- C)^+ \ dy  \\
+ \int_{Y_F} ( \beta \Phi (U +C) + \overline{\sigma} ) \varphi \ dy =0.
\end{gather*}
By monotonicity the second term is nonnegative. Again we should choose $C$ in
such a way that the coefficient in front of $\varphi$ in the third term is nonnegative.
If $C+U_m \geq 0$, we have
\begin{gather*}
\beta \Phi (U(y) +C) + \overline{\sigma} \geq - \beta z_1 n^0_1(\infty) e^{-z_1 (C + U_m )}
- \beta \sum_{j\in j^+} z_j n^0_j(\infty) + \overline{\sigma} .
\end{gather*}
Thus, if it happens that
\begin{gather}
\beta \sum_{j\in j^+} z_j n^0_j(\infty) - \overline{\sigma} < - \beta z_1 n^0_1(\infty) ,
\label{condC2}
\end{gather}
we take $C=-U_m$. If (\ref{condC2}) is not true, then our choice is
$$
C=- U_m - \frac{1}{z_1} \log \left(
\frac{\overline{\sigma}}{\beta z_1 n^0_1(\infty)}
- \sum_{j\in j^+} \frac{z_j n^0_j(\infty)}{z_1 n^0_1(\infty)} \right) \geq -U_m .
$$
This choice of the constant $C$ gives the upper bound in (\ref{Linfty}) and the Proposition is proved.
\end{proof}

By classical regularity theory for elliptic partial differential equations,
we easily deduce from Proposition \ref{Linft} that the solution of the
Poisson-Boltzmann equation is as smooth as the data are.

\begin{corollary}
Suppose $S\in C^\infty$ and $\sigma \in C^\infty_\# (S)$.
Then $\Psi \in C^\infty ({\bar Y}_F)$.
\end{corollary}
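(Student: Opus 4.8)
The plan is a by-now-standard elliptic bootstrap whose starting fuel is the $L^\infty$-bound of Proposition \ref{Linft}. First I would record that, by Lemma \ref{lem.pb}, the minimizer $\Psi$ belongs to $H^1_\#(Y_F)$, and, by Proposition \ref{Linft}, it is moreover in $L^\infty(Y_F)$. Hence $f := -\beta\,\Phi(\Psi) = \beta\sum_{j=1}^N z_j n_j^0(\infty) e^{-z_j\Psi}$ is a bounded function on $Y_F$, so $f \in L^p(Y_F)$ for every finite $p$, and equation (\ref{BP1}) now reads simply as the \emph{linear} Neumann problem $-\Delta\Psi = f$ in $Y_F$, $\nabla\Psi\cdot\mathbf{n} = -\sigma$ on $S$, with $\Psi$ $1$-periodic.

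\textbf{First gain of regularity.} I would apply the $L^p$ (Agmon--Douglis--Nirenberg, see e.g. \cite{GT}) estimates to this linear problem: interior estimates away from $S$, together with boundary estimates in a tubular neighbourhood of $S$ (which is of class $C^\infty$ by hypothesis), patched with a finite partition of unity subordinate to the covering of $S$ used in Section \ref{Geo}; periodicity disposes of the artificial boundary $\partial Y$. Since $\sigma \in C^\infty_\#(S)$, this gives $\Psi \in W^{2,p}_\#(Y_F)$ for every $p<\infty$, and the Sobolev embedding then yields $\Psi \in C^{1,\alpha}(\overline{Y_F})$ for every $\alpha \in (0,1)$.

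\textbf{Bootstrap.} Each map $t\mapsto e^{-z_j t}$ is real-analytic, so the Nemytskii operator $u\mapsto \Phi(u)$ sends $C^{k,\alpha}(\overline{Y_F})$ into itself for every $k\ge 0$. Therefore, once $\Psi \in C^{k,\alpha}(\overline{Y_F})$ with $k\ge 1$, the right-hand side $f=-\beta\Phi(\Psi)$ lies in $C^{k,\alpha}(\overline{Y_F})$, and the Schauder estimates for the Neumann problem with $C^\infty$ interface and $C^\infty$ Neumann datum give $\Psi \in C^{k+2,\alpha}(\overline{Y_F})$. Iterating from the $C^{1,\alpha}$ regularity just obtained shows $\Psi \in C^{k,\alpha}(\overline{Y_F})$ for all $k$, i.e. $\Psi \in C^\infty(\overline{Y_F})$.

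There is no genuine obstacle in this argument; the only points deserving care are that the elliptic estimates must be applied to the \emph{linearized} problem with the bounded source $f$ (rather than quoting nonlinear estimates directly), and the routine bookkeeping ensuring that the periodic geometry of $Y_F$ confines every boundary-regularity question to the smooth interface $S$, the outer boundary $\partial Y$ being absorbed by periodicity.
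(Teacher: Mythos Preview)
Your proof is correct and is precisely the standard elliptic bootstrap the paper has in mind: the paper gives no detailed argument for this corollary, merely stating that it follows from classical elliptic regularity theory together with the $L^\infty$-bound of Proposition~\ref{Linft}. Your write-up supplies exactly those details (boundedness of $\Phi(\Psi)$, $L^p$ estimates to reach $C^{1,\alpha}$, then Schauder bootstrap), so there is nothing to add or correct.
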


\section{The limit case of small $\beta$}
\label{small}

In this section we investigate the case of small values of $\beta$ which occurs
for small pore sizes. Equivalently, in the case of very dilute concentrations,
we can scale all concentration coefficients $n_j^0 (\infty)$ by a small parameter
which is multiplied to $\beta$. We shall prove that the solution $\Psi=\Psi_\beta$
of the Poisson-Boltzmann equation (\ref{BP1}) (with a subscript $\beta$ to indicate
that we study the behavior when $\beta \to 0^+$) is uniformly bounded up to an
additive constant which may blow up.

\begin{lemma}
\label{unif}
Let $\Psi_\beta$ be the unique solution of (\ref{BP1}). There exist a constant $C$,
which does not depend on $\beta$, such that
\begin{gather}
\label{Poin}
\| \Psi_\beta - \mathcal{M} (\Psi_\beta) \|_{H^1 (Y_F)} \leq C \|\sigma\|_{L^2(S)} .
\end{gather}
\end{lemma}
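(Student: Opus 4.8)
The plan is to test the weak formulation of (\ref{BP1}) against $\varphi = \Psi_\beta - \mathcal{M}(\Psi_\beta)$, where $\mathcal{M}(\Psi_\beta) = |Y_F|^{-1}\int_{Y_F}\Psi_\beta\,dy$ is the mean value, and to combine the resulting identity with the $L^\infty$-bound of Proposition \ref{Linft} to absorb the nonlinear term. Writing $\overline\Psi_\beta := \mathcal{M}(\Psi_\beta)$ and $v := \Psi_\beta - \overline\Psi_\beta$, which has zero mean, the weak formulation reads
\begin{equation*}
\int_{Y_F}|\nabla v|^2\,dy + \beta\int_{Y_F}\Phi(\Psi_\beta)\,v\,dy + \int_S \sigma v\,dS = 0,
\end{equation*}
since $\nabla\Psi_\beta = \nabla v$ and $\int_S\sigma\,\overline\Psi_\beta\,dS$ cancels against... actually one must be slightly careful: testing against the constant $\overline\Psi_\beta$ gives $\beta\int_{Y_F}\Phi(\Psi_\beta)\,dy + \int_S\sigma\,dS = 0$, so $\beta\int_{Y_F}\Phi(\Psi_\beta)\,v\,dy = \beta\int_{Y_F}\Phi(\Psi_\beta)\,\Psi_\beta\,dy - \overline\Psi_\beta\beta\int_{Y_F}\Phi(\Psi_\beta)\,dy = \beta\int_{Y_F}\Phi(\Psi_\beta)\Psi_\beta\,dy + \overline\Psi_\beta\int_S\sigma\,dS$. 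The cleanest route, though, is simply to keep the identity above and bound each term.

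The key point is the sign and size of the nonlinear term $\beta\int_{Y_F}\Phi(\Psi_\beta)\,v\,dy$. Here I would subtract and add $\Phi(\overline\Psi_\beta)$: since $\Phi$ is monotone (it is $\Cc'$ with $\Cc$ convex, by (\ref{conv})), we have $\int_{Y_F}\big(\Phi(\Psi_\beta)-\Phi(\overline\Psi_\beta)\big)(\Psi_\beta-\overline\Psi_\beta)\,dy \geq 0$, so
\begin{equation*}
\int_{Y_F}|\nabla v|^2\,dy + \beta\Phi(\overline\Psi_\beta)\int_{Y_F}v\,dy + \int_S\sigma v\,dS \leq 0,
\end{equation*}
and the middle term vanishes because $v$ has zero mean. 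Hence $\int_{Y_F}|\nabla v|^2\,dy \leq -\int_S\sigma v\,dS \leq \|\sigma\|_{L^2(S)}\|v\|_{L^2(S)}$. Now one invokes the trace inequality $\|v\|_{L^2(S)} \leq C\|v\|_{H^1(Y_F)}$ together with the Poincaré–Wirtinger inequality $\|v\|_{H^1(Y_F)} \leq C\|\nabla v\|_{L^2(Y_F)}$ valid for zero-mean periodic functions on the connected set $Y_F$; combining these yields $\|\nabla v\|_{L^2(Y_F)}^2 \leq C\|\sigma\|_{L^2(S)}\|\nabla v\|_{L^2(Y_F)}$, so $\|\nabla v\|_{L^2(Y_F)} \leq C\|\sigma\|_{L^2(S)}$, and one more application of Poincaré–Wirtinger gives the claim.

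The main thing to get right — and the reason Proposition \ref{Linft} is cited just before — is the justification that the integrals above are finite and that testing against $v$ (which is only in $H^1$, hence a priori only that $\Phi(\Psi_\beta)v$ is integrable via Lemma \ref{lem.pb}) is legitimate; the $L^\infty$-bound makes $\Phi(\Psi_\beta)$ bounded, so all manipulations are clean, though strictly one does not even need the $L^\infty$-bound for this particular estimate since the monotonicity trick discards the nonlinear term entirely. I expect no serious obstacle: the only subtlety is the bookkeeping with the mean-value constant and confirming that the Poincaré constant for $H^1_\#(Y_F)$ depends only on the fixed geometry $Y_F$, not on $\beta$ — which is immediate since $Y_F$ is fixed. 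Note the estimate (\ref{Poin}) is exactly what is needed to then extract, as $\beta\to 0$, a limit of $\Psi_\beta - \mathcal{M}(\Psi_\beta)$, the renormalized potential, even though $\mathcal{M}(\Psi_\beta)$ itself may diverge like $\log\beta/z_1$.
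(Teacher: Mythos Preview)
Your proof is correct and follows essentially the same approach as the paper: test the variational formulation against $\Psi_\beta - \mathcal{M}(\Psi_\beta)$, use the monotonicity of $\Phi$ (after subtracting the constant $\Phi(\mathcal{M}(\Psi_\beta))$, which integrates to zero against the zero-mean test function) to discard the nonlinear term, and conclude with the trace and Poincar\'e--Wirtinger inequalities. Your observation that the $L^\infty$-bound of Proposition~\ref{Linft} is not actually needed here is accurate; the paper does not invoke it in this proof either.
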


\begin{proof}
We recall that the variational formulation (or the virtual work formulation)
corresponding to (\ref{BP1}) is, for any smooth 1-periodic function $\varphi$,
\begin{gather}
\int_{Y_F} \nabla \Psi_\beta\cdot\nabla \varphi \, dy +
\beta \int_{Y_F} \Phi (\Psi_\beta) \varphi \, dy + \int_S \sigma\varphi \, dS =0 ,
\label{VV}
\end{gather}
where the nonlinear function $\Phi$ is defined by (\ref{BPPsi}).
Let $\mathcal{M}$ be the averaging operator defined by
$\mathcal{M}(g) =  \frac{1}{|Y_F |} \int_{Y_F} g (y) \ dy$.
Taking the test function $\dsp \varphi = \Psi_\beta-  \mathcal{M} (\Psi_\beta )$ in (\ref{VV}) we get
\begin{gather*}
\int_{Y_F} | \nabla\Psi_\beta |^2 \, dy +
\beta \int_{Y_F} \left(\Phi (\Psi_\beta) - \Phi (\mathcal{M} (\Psi_\beta) \right)
\left( \Psi_\beta - \mathcal{M} (\Psi_\beta ) \right) \, dy  \notag \\
  + \int_S \sigma(\Psi_\beta -  \mathcal{M} (\Psi_\beta ) )\ dS =0 .
\end{gather*}
By monotonicity of $\Phi$ the second term is nonnegative and Poincar\'e inequality
yields the a priori estimate (\ref{Poin}).

\end{proof}

When $\int_S \sigma \, dS\neq0$, we expect that $\mathcal{M} (\Psi_\beta)$ blows
up as $\beta$ tends to zero. This is already indicated by the $L^\infty$-bounds
from Proposition \ref{Linft}. More convincingly, choosing $\varphi =1$ in
(\ref{VV}) leads to
\begin{equation}\label{Balance}
\beta \int_{Y_F} \Phi (\Psi_\beta)  \ dy  =- \int_S \sigma \ dS,
\end{equation}
which shows that $\mathcal{M} (\Phi (\Psi_\beta))$ blows up, at least.
Remark also that, for $\beta =0$, the corresponding boundary value problem (\ref{BP1}) has no solution. However, in the case $\int_S \sigma \, dS=0$, $\Psi_\beta$ is bounded
as will be proved in Section \ref{ss=0}.

\subsection{Formal asymptotics}
\label{ss.formal}
We first obtain by a formal method of asymptotic expansions the boundary value problem,
corresponding to the limit $\beta \to 0$.  There are 3 possibilities.
  \vskip2pt
  {\bf Case 1: $\int_S \sigma <0$.} In this case it is the most negative valence $z_1$ which matters.
Since $\Psi_\beta -  \mathcal{M} (\Psi_\beta ) $ is bounded, we look for an asymptotic formula
\begin{equation}
\label{eq.ansatz}
\Psi_\beta(y) = a_\beta +  \psi_0 (y) + o(1) ,
\end{equation}
with a constant $a_\beta \to +\infty $ and $\psi_0$ a function independent of $\beta$
and of zero mean in $Y_F$.
We will see that further terms in the asymptotic expansion come in general with
fractional powers of $\beta$.

The equality (\ref{Balance}) becomes
\begin{equation*}
    -\beta \sum_{j=1}^N z_j n^0_j (\infty) \int_{Y_F} e^{-z_j (a_\beta + \psi_0 (x) + o(1))}  \ dy = - \int_S \sigma \ dS .
\end{equation*}
Therefore, we have
\begin{gather}
z_1 n^0_1(\infty) \beta e^{-z_1 a_\beta}\int_{Y_F} e^{z_1 (\psi_0(y) + o(1))}  \ dy \notag \\
+ \beta \sum_{j=2}^N z_j n^0_j(\infty) e^{-z_j a_\beta } \int_{Y_F} e^{-z_j  (\psi_0 (y) + o(1))}  \ dy = \int_S \sigma \ dS ,
\label{Balance0}
\end{gather}
and, since $a_\beta \to +\infty $, at the leading order (\ref{Balance0}) reads
\begin{equation}
\label{Balance1}
z_1 n^0_1(\infty) \beta e^{-z_1 a_\beta} \int_{Y_F} e^{-z_1 (\psi_0(y) + o(1))} \ dy = \int_S \sigma \ dS .
\end{equation}
From (\ref{Balance1}) we deduce
\begin{equation}
\label{alg1}
    a_\beta = \frac{\log\beta}{z_1} + C_0 ,
\end{equation}
where $C_0$ is a constant which may depend on $\beta$ but is bounded as a function of $\beta\to0$. This decomposition allows us to eliminate the singular part $a_\beta$ in the expansion.
We thus get the following nonlinear equation for $\psi_0(y)$
\begin{equation}
\label{BP1ren}
\left\{ \begin{array}{ll}
\dsp  -\Delta \psi_{0} (y) -\int_S \sigma \ dS \frac{e^{-z_1 \psi_0} }{\int_{Y_F} e^{-z_1 \psi_{0} (y)} \ dy} =0 \ \mbox{ in } \ Y_F , &  \\
        \dsp   \nabla \psi_{0} \cdot {\bf n} = -\sigma\ \mbox{ on } \, S ,& \\
\dsp \psi_{0} \; \mbox{ is } 1-\mbox{periodic and } \int_{Y_F} \psi_{0} \ dy=0.&
      \end{array}
    \right.
\end{equation}
In our approximate formula for $\Psi_\beta$ we have neglected terms of order $O (\beta^{1- z_2 /z_1 } )$. In the canonical case of 2 opposite valencies ($N=2$, $z_1=-z_2$), the neglected term is of order $O(\beta ^2)$.

Equation (\ref{BP1ren}) does not contain $\beta$.
Rather than using $\psi_0$, it is more practical to use $\varphi_0 (y)= \psi_0 (y) + C_0$. Then we have
\begin{equation}
\label{Smallbeta1}
    \Psi_\beta (y)  = \frac{\log\beta}{z_1} + \varphi_0 (y) + O(\beta^{1/z^-} ),
\end{equation}
and $\varphi_0$  is the solution to the boundary value problem
\begin{equation}\label{BP0}
\left\{ \begin{array}{ll}
\dsp  -\Delta \varphi_0 (y) - z_1 n^0_1(\infty) e^{-z_1 \varphi_0(y)} =0  \ \mbox{ in } \ Y_F , &  \\
        \dsp   \nabla  \varphi_0 \cdot {\bf n} = -\sigma\ \mbox{ on } \, S ,& \\
 \varphi_0 \; \mbox{ is } 1-\mbox{periodic}.&
      \end{array}
    \right.
\end{equation}
We note that by testing (\ref{BP0}) by a constant and integrating, we get
$$
z_1 n^0_1(\infty) \int_{Y_F} e^{-z_1 \varphi_0(y)} \ dy = \int_S \sigma \ dS.
$$
Consequently, $\varphi_0$ solves (\ref{BP1ren}) except that it is not of mean zero.
We have the following simple result.

\begin{lemma}
\label{L11}
Assume that $\sigma$ is a smooth bounded function such that $\int_S \sigma \ dS <0$.
Then problem (\ref{BP0}) has a unique solution $\varphi_{0} \in H^1_\#(Y_F)$ such that
$$
 e^{-z_1 \varphi_{0}}  \; \mbox{and} \;  e^{-z_1 \varphi_{0}} \varphi_{0}
$$
are absolutely integrable.
\end{lemma}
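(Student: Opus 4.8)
The plan is to realize $\varphi_0$ as the unique minimizer of the strictly convex energy functional
\[
J_0(\varphi) = \frac12 \int_{Y_F} |\nabla\varphi|^2 \, dy + n_1^0(\infty) \int_{Y_F} e^{-z_1\varphi}\, dy + \int_S \sigma\varphi\, dS
\]
over $H^1_\#(Y_F)$, exactly as in Lemma \ref{lem.pb}. A direct computation of the first variation shows that a critical point of $J_0$ is a weak solution of (\ref{BP0}), and strict convexity of $J_0$ (the map $t\mapsto e^{-z_1 t}$ is pointwise strictly convex, the Dirichlet term is convex, the surface term is linear) yields uniqueness once existence is established. So everything reduces to coercivity and weak lower semicontinuity of $J_0$ on $H^1_\#(Y_F)$, plus the claimed integrability of $e^{-z_1\varphi_0}$ and $\varphi_0 e^{-z_1\varphi_0}$ at the minimizer.

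The heart of the matter is coercivity, and this is where the hypothesis $\int_S\sigma\, dS<0$ is used. Writing $\varphi = m + \tilde\varphi$ with $m = \mathcal{M}(\varphi)$ and $\mathcal{M}(\tilde\varphi)=0$, Jensen's inequality gives $\int_{Y_F} e^{-z_1\tilde\varphi}\, dy \geq |Y_F|$, while the trace and Poincar\'e--Wirtinger inequalities allow one to absorb $\int_S\sigma\tilde\varphi\, dS$ into $\frac14\|\nabla\tilde\varphi\|_{L^2(Y_F)}^2 + C\|\sigma\|_{L^2(S)}^2$, leaving
\[
J_0(\varphi) \geq \frac14 \int_{Y_F}|\nabla\tilde\varphi|^2\, dy + \Big( n_1^0(\infty)|Y_F|\, e^{-z_1 m} + m \int_S\sigma\, dS \Big) - C\|\sigma\|_{L^2(S)}^2 .
\]
Since $z_1<0$, the term $e^{-z_1 m}$ dominates as $m\to +\infty$; and as $m\to -\infty$ the linear term $m\int_S\sigma\, dS$ tends to $+\infty$ precisely because $\int_S\sigma\, dS<0$. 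Hence the bracketed function of $m$ is convex, bounded below and coercive on $\RR$, which together with the Dirichlet term gives coercivity of $J_0$ on $H^1_\#(Y_F)$. This sign condition plays here the role that bulk electroneutrality (\ref{Neutrality}) plays in Lemma \ref{lem.pb}. Lower semicontinuity is standard: along a minimizing sequence, which is bounded in $H^1_\#(Y_F)$ by coercivity, extract a subsequence converging weakly in $H^1$, strongly in $L^2(S)$ (compactness of the trace map) and a.e.\ in $Y_F$; weak lower semicontinuity of the Dirichlet integral, convergence of the surface term, and Fatou's lemma applied to the nonnegative integrand $e^{-z_1\varphi}$ give $J_0(\varphi_0)\leq\liminf J_0(\varphi_n)$. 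As $J_0(0)<\infty$, the minimizer $\varphi_0$ is a genuine element of $H^1_\#(Y_F)$.

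It remains to upgrade integrability, for which I would follow the argument of Lemma \ref{lem.pb} and \cite{L:06}. Finiteness of $J_0(\varphi_0)$ already gives $e^{-z_1\varphi_0}\in L^1(Y_F)$. For the weighted term, note that $s\mapsto s\,e^{-z_1 s}$ is bounded on $(-\infty,0]$ (it tends to $0$ as $s\to-\infty$ because $z_1<0$), so $\varphi_0 e^{-z_1\varphi_0}$ is automatically integrable on $\{\varphi_0\leq 0\}$; on $\{\varphi_0>0\}$ one tests the Euler--Lagrange equation against the truncations $T_k(\varphi_0)=\max(-k,\min(k,\varphi_0))$, which are admissible since $e^{-z_1\varphi_0}T_k(\varphi_0)\in L^1$, observes that the resulting Dirichlet term $\int_{\{|\varphi_0|<k\}}|\nabla\varphi_0|^2$ is nonnegative and that the surface term is bounded uniformly in $k$ by $\|\sigma\|_{L^\infty(S)}\|\varphi_0\|_{L^1(S)}$ (the trace $\varphi_0|_S$ lies in $L^1(S)$), and concludes by monotone convergence that $\int_{\{\varphi_0>0\}}\varphi_0 e^{-z_1\varphi_0}\, dy<\infty$. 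The same truncation and density argument shows that the weak formulation of (\ref{BP0}) holds against every test function in $H^1_\#(Y_F)$, so $\varphi_0$ is indeed a weak solution. I expect the coercivity step — in particular, isolating that it is the combination of $z_1<0$ and $\int_S\sigma\, dS<0$ that makes $J_0$ coercive — and the tail estimate for $\varphi_0 e^{-z_1\varphi_0}$ on $\{\varphi_0>0\}$ to be the only non-routine points; the remainder is a verbatim adaptation of the proof of Lemma \ref{lem.pb}.
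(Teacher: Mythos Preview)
Your proposal is correct and follows essentially the same approach as the paper: both introduce the functional $J_0$, note its strict convexity, identify that the hypothesis $\int_S\sigma\,dS<0$ (together with $z_1<0$) is precisely what makes $J_0$ coercive, and then defer the remaining integrability and weak-solution details to the argument of Lemma~\ref{lem.pb} and \cite{L:06}. You have simply fleshed out the coercivity step and the tail-integrability argument that the paper leaves implicit.
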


\begin{proof}
The corresponding functional, to be minimized, is
\begin{gather*}
J_0(\varphi) = \frac{1}{2} \int_{Y_F} | \nabla \varphi |^2 \ dy + n^0_1(\infty)
\int_{Y_F} e^{-z_1 \phi} \  dy +  \int_{S} \sigma \varphi \ dS .
\end{gather*}
It is strictly convex and the condition $\int_S \sigma \ dS <0$ insures the coercivity.
The rest of the proof follows that of Lemma \ref{lem.pb}.
\end{proof}

\vskip2pt
  {\bf Case 2: $\int_S \sigma >0$.} In this case it is the largest positive valence $z_N$
which matters. At the leading order, (\ref{Balance}) reads
\begin{equation}\label{Balance2}
\beta z_N n^0_N(\infty) e^{-z_N a_\beta}\int_{Y_F} e^{-z_N (\psi_0 (y) + o(1)))} \ dy
= \int_S \sigma \ dS .
\end{equation}
For the same asymptotic expansion (\ref{eq.ansatz}),
equation (\ref{Balance2}) allows us to compute the singular behavior $a_\beta$
and we get the following equation for the zero-mean perturbation $\psi_{0}$, $\dsp \int_{Y_F} \psi_{0} \ dy=0$
\begin{equation}
\label{BP1ren2}
\left\{ \begin{array}{ll}
        \dsp  -\Delta \psi_{0} -\int_S \sigma \ dS \frac{e^{-z_N \psi_0} }{\int_{Y_F} e^{-z_N \psi_{0} (x)} \ dy} =0 \ \mbox{ in } \ Y_F , &  \\
        \dsp   \nabla \psi_{0} \cdot {\bf n} = -\sigma\ \mbox{ on } \, S ,& \\
 \psi_{0} \; \mbox{ is } 1-\mbox{periodic and } \int_{Y_F} \psi_{0} \ dy=0.&
      \end{array}
    \right.
\end{equation}
By the same reasoning as in the first case, we deduce
\begin{equation}
\label{Smallbeta}
\Psi_\beta (y) = \frac{\log\beta}{z_N} + \xi_{0} (y) + O(\beta^{1/z_N} ),
\end{equation}
where $\xi_0$ is the solution of
\begin{equation}\label{BP00}
\left\{ \begin{array}{ll}
\dsp  -\Delta \xi_0 (y) - z_N n^0_N(\infty) e^{-z_N \xi_0 (y) } =0  \ \mbox{ in } \ Y_F , &  \\
\dsp   \nabla  \xi_0 \cdot {\bf n} = -\sigma\ \mbox{ on } \, S ,& \\
 \xi_0 \; \mbox{ is } 1-\mbox{periodic}.&
\end{array}  \right.
\end{equation}
By testing (\ref{BP00}) with a constant and integrating, we get
$$
z_N n^0_N(\infty) \int_{Y_F} e^{-z_N \xi_0 (y) } \ dy = \int_S \sigma \ dS.
$$
Consequently, $\xi_0$ solves (\ref{BP1ren2}) except that it is not of zero average.
We have the following simple result.

\begin{lemma}
\label{L12}
Assume that $\sigma$ is a smooth bounded function such that $\int_S \sigma \ dS >0$.
Then problem (\ref{BP00}) has a unique solution $\varphi_{0} \in H^1_\#(Y_F)$ such that
$$
 e^{- z_N \xi_{0}}  \; \mbox{and} \;  e^{- z_N \xi_{0}} \xi_{0}
$$
are absolutely integrable.
\end{lemma}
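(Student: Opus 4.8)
The plan is to mirror the proof of Lemma \ref{L11}, which in turn rests on the argument behind Lemma \ref{lem.pb}. Since here it is the largest valence $z_N>0$ which is active, the relevant energy is
\[
J_0(\varphi) = \frac{1}{2} \int_{Y_F} | \nabla \varphi |^2 \, dy
+ n^0_N(\infty) \int_{Y_F} e^{-z_N \varphi} \, dy + \int_{S} \sigma \varphi \, dS ,
\]
whose formal Euler--Lagrange equation is exactly (\ref{BP00}). First I would record strict convexity: the Dirichlet term is convex (strictly so modulo additive constants), the map $t\mapsto e^{-z_N t}$ is strictly convex so that $\varphi\mapsto \int_{Y_F} e^{-z_N\varphi}\,dy$ is strictly convex on its effective domain, and the boundary term is affine; hence $J_0$ is strictly convex on $H^1_\#(Y_F)$, which will give uniqueness of the minimizer once existence is known. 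Note that, unlike in problem (\ref{BP1ren2}), no zero-mean normalization is needed: the exponential nonlinearity pins down the additive constant.

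The crucial point is coercivity, and this is where the sign hypothesis $\int_S\sigma\,dS>0$ enters (in contrast with $\int_S\sigma\,dS<0$ in Lemma \ref{L11}). Splitting $\varphi = \mathcal{M}(\varphi) + \tilde\varphi$ with $\tilde\varphi$ of zero mean, the Poincar\'e inequality controls $\|\tilde\varphi\|_{H^1(Y_F)}$ by $\|\nabla\varphi\|_{L^2(Y_F)}$, and the trace theorem bounds $\big|\int_S \sigma\tilde\varphi\,dS\big|$ accordingly; so it remains to show that $n^0_N(\infty)\int_{Y_F} e^{-z_N\varphi}\,dy + \mathcal{M}(\varphi)\int_S\sigma\,dS$ is bounded below and blows up as $|\mathcal{M}(\varphi)|\to\infty$. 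By Jensen's inequality $\int_{Y_F} e^{-z_N\varphi}\,dy \ge |Y_F|\, e^{-z_N \mathcal{M}(\varphi)}$, and since $z_N>0$ and $\int_S\sigma\,dS>0$, the scalar function $m\mapsto |Y_F| n^0_N(\infty) e^{-z_N m} + m\int_S\sigma\,dS$ is bounded below on $\mathbb{R}$ and tends to $+\infty$ as $m\to\pm\infty$. Hence $J_0$ is coercive on $H^1_\#(Y_F)$.

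With coercivity and weak lower semicontinuity of each term (the Dirichlet term by convexity, the exponential term by Fatou's lemma applied to a minimizing sequence which, being bounded in $H^1$, converges weakly and hence a.e.\ along a subsequence, the boundary term by compactness of the trace embedding), the direct method produces a minimizer $\xi_0\in H^1_\#(Y_F)$; strict convexity makes it unique. Absolute integrability of $e^{-z_N\xi_0}$ is immediate from $J_0(\xi_0)<\infty$, and that of $e^{-z_N\xi_0}\xi_0$ follows exactly as in \cite{L:06}: one justifies that $\xi_0$ solves (\ref{BP00}) weakly by testing the first variation with truncations $\min(\max(\xi_0,-k),k)$ and passing to the limit by monotone convergence. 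The expected main obstacle is precisely this last step — making the first variation of $J_0$ rigorous despite the exponential nonlinearity being infinite on part of $H^1_\#(Y_F)$ — but it is handled by the same truncation argument as in the proof of Lemma \ref{lem.pb}, so no genuinely new difficulty arises.
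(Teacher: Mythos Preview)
Your proposal is correct and follows essentially the same approach as the paper. The paper does not give a separate proof of Lemma~\ref{L12}; its proof of the companion Lemma~\ref{L11} simply writes down the analogous functional, asserts strict convexity, observes that the sign condition on $\int_S\sigma\,dS$ yields coercivity, and defers everything else to the proof of Lemma~\ref{lem.pb}---you have supplied exactly this argument with the roles of $z_1$ and $z_N$ swapped, only with more detail on the coercivity and lower semicontinuity steps.
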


\vskip2pt
  {\bf Case 3: $\int_S \sigma =0$.} In this case the problem corresponding to $\beta=0$ has a solution and the analysis is much simpler.

The following limit problem
\begin{equation}\label{BP1ren3}
\left\{ \begin{array}{ll}
\dsp  -\Delta \Psi_{0} (y) =0 \ \mbox{ in } \ Y_F , &  \\
\dsp   \nabla \Psi_{0} \cdot {\bf n} = -\sigma\ \mbox{ on } \, S ,& \\
\dsp \Psi_{0} \; \mbox{ is } 1-\mbox{periodic and } \int_{Y_F} \Phi (\Psi_{0} ) \ dy=0 ,&
\end{array} \right.
\end{equation}
has a unique solution $\Psi_{0}$ since the function $\Phi$ is monotone.
Then we have
\begin{equation}\label{Smallbeta3}
    \Psi_{\beta} (y) = \Psi_{0} (y) + O(\beta).
\end{equation}

\subsection{Rigorous perturbation results when $\int_S \sigma \ dS \neq0$}

We focus on the case $\int_S \sigma \ dS <0$: the opposite one, $\int_S \sigma \ dS >0$,
is completely analogous. Motivated by the discussion leading to (\ref{Smallbeta1}), we
look for $\Psi_\beta$ in the form
\begin{equation}
\label{Smallbetaneg}
\Psi_\beta (y) = \frac{\log\beta}{z_1} + \varphi_\beta (y),
\end{equation}
where $\varphi_\beta$ is the solution of
\begin{equation}
\label{BPbeta-}
\left\{ \begin{array}{ll}
        \dsp  -\Delta \varphi_\beta (y) - z_1 n^0_1(\infty) e^{-z_1 \varphi_\beta (y) } + {\tilde \Phi } (\varphi_\beta ) =0  \ \mbox{ in } \ Y_F , &  \\
        \dsp   \nabla  \varphi_\beta \cdot {\bf n} = -\sigma\ \mbox{ on } \, S ,& \\
 \varphi_\beta \; \mbox{ is } 1-\mbox{periodic,}&
\end{array} \right.
\end{equation}
with
\begin{equation}
\label{rescPhi}
{\tilde \Phi} (g) = - \sum_{j=2}^N z_j n^0_j(\infty) \beta^{1-z_j/z_1} e^{-z_j g} .
\end{equation}
We start with a uniform $H^1$-estimate for $\varphi_\beta$.

\begin{lemma}
\label{L13}
Let $\sigma$ be a smooth bounded function such that $\int_S \sigma \ dS <0$.
Then, for small enough $\beta$, the solution $\varphi_\beta$ of (\ref{BPbeta-})
satisfies the estimate
\begin{equation}\label{Apriori1}
\| \varphi_\beta \|_{H^1 (Y_F)} \leq C,
\end{equation}
where $C$ is independent of $\beta$.
\end{lemma}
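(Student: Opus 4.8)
The plan is to derive the bound from the variational formulation of (\ref{BPbeta-}), following the same strategy as in Lemma \ref{unif}, but being careful that the extra term $\tilde\Phi(\varphi_\beta)$ does not destroy coercivity. First I would write the weak formulation: for any smooth $1$-periodic $\varphi$,
\begin{gather}
\int_{Y_F} \nabla\varphi_\beta\cdot\nabla\varphi\,dy
- z_1 n^0_1(\infty)\int_{Y_F} e^{-z_1\varphi_\beta}\varphi\,dy
+ \int_{Y_F}\tilde\Phi(\varphi_\beta)\varphi\,dy
+ \int_S\sigma\varphi\,dS = 0 . \notag
\end{gather}
Testing with $\varphi = \varphi_\beta - \mathcal{M}(\varphi_\beta)$ and using the monotonicity of $g\mapsto -z_1 n^0_1(\infty)e^{-z_1 g}$ and of each $g\mapsto -z_j n^0_j(\infty)e^{-z_j g}$ (all the exponential terms appearing in $\Phi$ and in $\tilde\Phi$ are derivatives of convex functions), the full nonlinear contribution $\int_{Y_F}\big(\Phi_{\mathrm{full}}(\varphi_\beta)-\Phi_{\mathrm{full}}(\mathcal{M}(\varphi_\beta))\big)(\varphi_\beta-\mathcal{M}(\varphi_\beta))\,dy$ is nonnegative, exactly as in Lemma \ref{unif}. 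This gives $\|\nabla\varphi_\beta\|_{L^2(Y_F)}\le C\|\sigma\|_{L^2(S)}$, hence control of $\varphi_\beta - \mathcal{M}(\varphi_\beta)$ in $H^1(Y_F)$ by Poincaré. The remaining and genuine difficulty is to bound the mean $\mathcal{M}(\varphi_\beta)$.

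For the mean, I would test the weak formulation with $\varphi \equiv 1$, which yields the balance relation
\begin{gather}
- z_1 n^0_1(\infty)\int_{Y_F} e^{-z_1\varphi_\beta}\,dy
+ \int_{Y_F}\tilde\Phi(\varphi_\beta)\,dy = -\int_S\sigma\,dS > 0 . \notag
\end{gather}
Since $\tilde\Phi(\varphi_\beta)$ carries the prefactors $\beta^{1-z_j/z_1}$, which tend to $0$ as $\beta\to 0$ for every $j\ge 2$ (because $z_j/z_1 < 1$ when $j\ge 2$ and $z_1<0$), the dominant term is $-z_1 n^0_1(\infty)\int_{Y_F}e^{-z_1\varphi_\beta}\,dy$, and this is positive. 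Writing $\varphi_\beta = \mathcal{M}(\varphi_\beta) + \tilde\varphi_\beta$ with $\tilde\varphi_\beta$ already bounded in $H^1$ (hence, by Sobolev/Trudinger-type embedding in dimension $d\le 3$, with $e^{-z_1\tilde\varphi_\beta}$ bounded in $L^1(Y_F)$ uniformly in $\beta$), Jensen's inequality gives $\int_{Y_F}e^{-z_1\varphi_\beta}\,dy \ge |Y_F|\,e^{-z_1\mathcal{M}(\varphi_\beta)}e^{\mathcal{M}(-z_1\tilde\varphi_\beta)} = |Y_F|\,e^{-z_1\mathcal{M}(\varphi_\beta)}$ (the last mean vanishes). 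Since $-z_1>0$, this forces an upper bound on $\mathcal{M}(\varphi_\beta)$. For the lower bound on $\mathcal{M}(\varphi_\beta)$, I would use the opposite estimate $\int_{Y_F}e^{-z_1\varphi_\beta}\,dy \le e^{-z_1\mathcal{M}(\varphi_\beta)}\int_{Y_F}e^{-z_1\tilde\varphi_\beta}\,dy$, together with the uniform $L^1$-bound on $e^{-z_1\tilde\varphi_\beta}$ coming from the $H^1$-bound on $\tilde\varphi_\beta$ and the Moser–Trudinger inequality, and the fact that the left-hand side of the balance relation is bounded below by a positive constant for small $\beta$ (the $\tilde\Phi$ terms being negligible); this rules out $\mathcal{M}(\varphi_\beta)\to-\infty$.

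The main obstacle is the interplay between the exponential nonlinearity and the mean: one must make sure that $e^{-z_1\varphi_\beta}$ is genuinely integrable with a bound uniform in $\beta$, which is why the Moser–Trudinger (or Trudinger) inequality applied to the mean-zero part $\tilde\varphi_\beta$ — whose $H^1$-norm is already controlled — is the crucial analytic input; and one must check that the $\tilde\Phi$ contributions, although they also contain growing exponentials $e^{-z_j\varphi_\beta}$ with $z_j$ of either sign, are dominated by their vanishing prefactors $\beta^{1-z_j/z_1}$ once the bounds on $\mathcal{M}(\varphi_\beta)$ are in place — so the argument is slightly circular and must be organized as a continuity/bootstrap argument in $\beta$, or else phrased so that the a priori bound on $\mathcal{M}(\varphi_\beta)$ is extracted first from the dominant term and the smallness of $\beta$ absorbs the rest. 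Once $|\mathcal{M}(\varphi_\beta)|\le C$ is established, combining it with the $H^1$-bound on $\varphi_\beta - \mathcal{M}(\varphi_\beta)$ gives (\ref{Apriori1}).
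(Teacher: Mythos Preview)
Your first step---testing with $\varphi_\beta-\mathcal{M}(\varphi_\beta)$ and invoking monotonicity to bound $\|\nabla\varphi_\beta\|_{L^2}$---is correct; indeed it is just Lemma~\ref{unif} transported by the shift $\Psi_\beta=\frac{\log\beta}{z_1}+\varphi_\beta$. The real content of the lemma is the bound on the mean, and here your argument has a genuine gap. The key analytic input you invoke, namely a uniform $L^1$ bound on $e^{-z_1\tilde\varphi_\beta}$ coming from an $H^1$ bound on the mean-zero part $\tilde\varphi_\beta$, is a Moser--Trudinger type statement which holds for $W^{1,d}$ functions. In dimension $d=2$ this is indeed $H^1$, but in $d=3$ an $H^1$ bound gives only $L^6$ and does \emph{not} yield exponential integrability. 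Since the paper works in $d=2,3$, this step fails as written in the physically relevant three-dimensional case. Moreover, even in $d=2$ you correctly identify but do not close the circularity: to bound $-\int\tilde\Phi(\varphi_\beta)$ in the balance identity you already need two-sided control on $\mathcal{M}(\varphi_\beta)$, and the vague appeal to a ``continuity/bootstrap argument in $\beta$'' is not a proof.

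The paper's argument is both different and more elementary. Rather than separating the mean, it tests directly with $\varphi=\varphi_\beta=\varphi_\beta^++\varphi_\beta^-$ and handles the two parts asymmetrically. On the positive part it exploits the \emph{strict} monotonicity of the dominant term: since $(-z_1 e^{-z_1 g})'=(z_1)^2 e^{-z_1 g}\ge (z_1)^2$ for $g\ge 0$, one gets a quadratic lower bound $(z_1)^2 n_1^0(\infty)\int|\varphi_\beta^+|^2$, which together with $\int|\nabla\varphi_\beta^+|^2$ controls $\|\varphi_\beta^+\|_{H^1}$ outright. On the negative part the gradient term controls $\varphi_\beta^--\mathcal{M}(\varphi_\beta^-)$ by Poincar\'e, and the mean $\mathcal{M}(\varphi_\beta^-)\le 0$ is handled using the sign hypothesis $\int_S\sigma\,dS<0$, which makes the product $(\int_S\sigma)\,\mathcal{M}(\varphi_\beta^-)$ nonnegative. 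No exponential integrability of the oscillation is needed, and the argument is dimension-independent. If you want to salvage your route in $d=3$, the cleanest fix is to first establish the uniform $L^\infty$ bound of Proposition~\ref{Linftbetasmall} (whose proof is independent of Lemma~\ref{L13}); with $\varphi_\beta$ bounded in $L^\infty$, all exponentials are trivially integrable and your balance argument goes through without Moser--Trudinger.
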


\begin{proof}
The variational formulation of problem (\ref{BPbeta-}) reads, for any
smooth 1-periodic function $\varphi$,
\begin{gather}
\label{VVR}
\int_{Y_F} \nabla \varphi_\beta\cdot\nabla \varphi \, dy +
\int_{Y_F} \left( {\tilde \Phi} (\varphi_\beta) -
z_1 n^0_1(\infty) e^{-z_1 \varphi_\beta} \right) \varphi \, dy +
\int_S \sigma\varphi \ dS =0  .
\end{gather}
In (\ref{VVR}) we take $\varphi=\varphi_\beta=\varphi_\beta^++\varphi_\beta^-$ and we get
\begin{equation}\label{trudno4}
\begin{array}{c}
\displaystyle
\int_{Y_F} |\nabla \varphi_\beta^+ |^2 \, dy +
(z_1)^2 n^0_1(\infty) \int_{Y_F} |\varphi_\beta^+|^2 \, dy +
\beta \Phi (\frac{\log\beta}{z_1} ) \mathcal{M} (\varphi_\beta^+) \\[4mm]
\displaystyle
+ \int_S \sigma\varphi_\beta^+ \, dS +  \int_{Y_F} |\nabla \varphi_\beta^- |^2 \, dy +
\beta \Phi (\frac{\log\beta}{z_1} ) \mathcal{M} (\varphi_\beta^-) \\[4mm]
\displaystyle
+ \int_S \sigma(\varphi_\beta^- - \mathcal{M} (\varphi_\beta^-) ) \, dS
+ \left(\int_S \sigma\, dS \right) \mathcal{M} (\varphi_\beta^-)   \leq 0.
\end{array}
\end{equation}
Indeed,
\begin{equation}\label{trudno0}
\int_{Y_F} |\nabla \varphi_\beta |^2 \, dy=
\int_{Y_F} |\nabla \varphi_\beta^+ |^2 \, dy+
\int_{Y_F} |\nabla \varphi_\beta^- |^2 \, dy,
\end{equation}
and
\begin{equation}\label{trudno00}
\int_S \sigma\varphi_\beta \, dS=
\int_S \sigma\varphi_\beta^+ \, dS+
\int_S \sigma(\varphi_\beta^- - \mathcal{M} (\varphi_\beta^-) ) \, dS +\left(\int_S \sigma\, dS \right) \mathcal{M} (\varphi_\beta^-).
\end{equation}
Furthermore, both functions ${\tilde \Phi}(g)$ and $g\to -z_1 e^{-z_1g}$ are
monotone and
$$
{\tilde \Phi} (0) - z_1 n^0_1(\infty) = \beta \Phi (\frac{\log\beta}{z_1} ) .
$$
Thus, we deduce
\begin{equation}\label{trudno1}
\left( {\tilde \Phi} (\varphi_\beta) - z_1 n^0_1(\infty) e^{-z_1 \varphi_\beta}
- {\tilde \Phi} (0) + z_1 n^0_1(\infty)\right) \varphi_\beta \geq 0 .
\end{equation}
However, we use a further argument of {\bf strict monotonicity} for $-z_1 e^{-z_1g}$,
namely
\begin{gather}
\left( -z_1 n_1^0(\infty) e^{-z_1\varphi_\beta} + z_1 n^0_1(\infty)\right) \varphi_\beta^+
= \left( -z_1 n_1^0(\infty) e^{-z_1\varphi_\beta^+} + z_1 n^0_1(\infty)\right) \varphi_\beta^+ \notag \\
\geq (z_1)^2 n_1^0(\infty)\varphi_\beta^+ ,
\label{trudno2}
\end{gather}
because $\left(-z_1 e^{-z_1g}\right)^\prime=(z_1)^2e^{-z_1g}\geq (z_1)^2$ for $g\geq0$.
Equalities (\ref{trudno0}), (\ref{trudno00}), together with the lower bounds (\ref{trudno1}),
(\ref{trudno2}), applied to the variational formulation (\ref{VVR}), yield the
desired inequality (\ref{trudno4}).
We recall that
$$
\lim_{\beta\to0^+} \beta \Phi (\frac{\log\beta}{z_1} ) = -z_1 n^0_1(\infty) > 0,
$$
so that, for sufficiently small $\beta>0$, $\beta \Phi (\frac{\log\beta}{z_1})$
is a positive bounded constant. Further, the product $(\int_S \sigma\ dS) \mathcal{M} (\varphi_\beta^-)$ is nonnegative. Therefore it suffices to apply Poincar\'e inequality and (\ref{Apriori1}) follows.
\end{proof}

Next we need a uniform $L^\infty$-bound for $\varphi_\beta$, as $\beta$ goes to 0.
(Recall that the $L^\infty$-bounds of Proposition \ref{Linft} are not uniform with
respect to $\beta$.)

\begin{proposition}
\label{Linftbetasmall}
For sufficiently small $\beta >0$, we have the bounds
\begin{gather}
U(y) - U_m
- \frac{1}{z_1} \log \max \{ 1, \, \frac{\overline{\sigma}}{z_1 n^0_1(\infty)} \}
\geq \varphi_\beta (y)  \geq  \notag \\
U (y) - U_M - \frac{1}{z_1} \log \min \{ 1 , \,
    \frac{\overline{\sigma}}{z_1 n^0_1(\infty)} \},
\label{Linftybeta1}
\end{gather}
where $U$ is the solution of the Neumann problem (\ref{BU1}).
\end{proposition}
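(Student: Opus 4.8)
The plan is to repeat, for the renormalised equation (\ref{BPbeta-}), the comparison argument used in the proof of Proposition~\ref{Linft}; the one genuinely new feature is that the lower–order part $\tilde\Phi$ of the nonlinearity carries the factors $\beta^{1-z_j/z_1}$ with $1-z_j/z_1>0$ for every $j\ge2$, hence is a uniformly small perturbation as $\beta\to0$ over any bounded range of its argument. I would first write the weak formulation of (\ref{BPbeta-}) for the difference $\varphi_\beta-U$, with $U$ the solution of the auxiliary Neumann problem (\ref{BU1}): for every smooth $1$-periodic $\varphi$,
\[
\int_{Y_F}\nabla(\varphi_\beta-U)\cdot\nabla\varphi\,dy+\int_{Y_F}\Phi_\beta(\varphi_\beta)\,\varphi\,dy+\overline{\sigma}\int_{Y_F}\varphi\,dy=0,
\qquad \Phi_\beta(g):=\tilde\Phi(g)-z_1 n^0_1(\infty)e^{-z_1 g},
\]
and note that $\Phi_\beta$ is strictly increasing, since $\Phi_\beta'(g)=z_1^2 n^0_1(\infty)e^{-z_1 g}+\sum_{j=2}^N z_j^2 n^0_j(\infty)\beta^{1-z_j/z_1}e^{-z_j g}>0$.

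For the lower bound I would test with $\varphi=(\varphi_\beta-U+C)^-$: exactly as in Proposition~\ref{Linft} the gradient term equals $\int_{Y_F}|\nabla\varphi|^2$, monotonicity of $\Phi_\beta$ makes $\int_{Y_F}(\Phi_\beta(\varphi_\beta)-\Phi_\beta(U-C))\,\varphi\ge0$, and it then suffices to choose a constant $C\ge U_M$ with $\Phi_\beta(U-C)+\overline{\sigma}\le0$ on $Y_F$, since this forces $\varphi\equiv0$, i.e.\ $\varphi_\beta\ge U-C$. On the range $U-C\le0$ the dominant term $-z_1 n^0_1(\infty)e^{-z_1(U-C)}=|z_1| n^0_1(\infty)e^{|z_1|(U-C)}$ is increasing in $U$ and equals $|z_1| n^0_1(\infty)\min\{1,\overline{\sigma}/(z_1 n^0_1(\infty))\}$ at $U=U_M$ when one takes $C=U_M+\frac{1}{z_1}\log\min\{1,\overline{\sigma}/(z_1 n^0_1(\infty))\}\ge U_M$; hence with this choice the species-$1$ part of $\Phi_\beta(U-C)+\overline{\sigma}$ is $\le0$. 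Since $U-C$ then stays in a fixed bounded interval and each $\beta^{1-z_j/z_1}\to0$, the residual $\tilde\Phi(U-C)$ is $O(\beta^{1-z_2/z_1})$ uniformly and is absorbed for $\beta$ small. The upper bound is symmetric: one tests with $(\varphi_\beta-U-C)^+$, picks $C=-U_m-\frac{1}{z_1}\log\max\{1,\overline{\sigma}/(z_1 n^0_1(\infty))\}\ge-U_m$ so that on $U+C\ge0$ the species-$1$ part of $\Phi_\beta(U+C)+\overline{\sigma}$ is $\ge0$, and absorbs the uniformly small remainder $\tilde\Phi(U+C)$. This gives (\ref{Linftybeta1}).

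The delicate point — and the main obstacle — is to make the absorption of $\tilde\Phi$ genuinely uniform in $\beta$. In the generic case $\overline{\sigma}\ne z_1 n^0_1(\infty)$ the species-$1$ comparison inequality holds with a strict $\beta$-independent gap, which dominates the $O(\beta^{1-z_2/z_1})$ residual once $\beta$ is small; in the borderline case one argues a little more carefully, either by allowing in $C$ a correction that vanishes as $\beta\to0$, or by retaining the extra coercive term $z_1^2 n^0_1(\infty)\int_{Y_F}\varphi^2$ produced by the strict monotonicity of $g\mapsto-z_1e^{-z_1 g}$ on $\{g\ge0\}$ (cf. the estimate (\ref{trudno2}) used in the proof of Lemma~\ref{L13}) and absorbing the residual into it. In every case, taking $\beta$ small enough closes the estimate.
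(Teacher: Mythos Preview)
Your proposal is correct and follows essentially the same approach as the paper's proof: both write the weak formulation for $\varphi_\beta-U$, test with $(\varphi_\beta-U+C)^-$ and $(\varphi_\beta-U-C)^+$, exploit the monotonicity of the full renormalised nonlinearity, and choose $C$ so that the species-$1$ part of $\Phi_\beta(U\mp C)+\overline\sigma$ has the desired sign, with the $\tilde\Phi$ contribution treated as a uniformly small perturbation thanks to the factors $\beta^{1-z_j/z_1}$. Your discussion of the absorption issue is in fact slightly more explicit than the paper's: where the paper simply takes $C$ strictly larger than the threshold value to create a gap, you correctly note that in the non-generic case one either lets $C$ carry an $o(1)$ correction or invokes the extra coercivity from the strict monotonicity of $g\mapsto -z_1e^{-z_1g}$; both amount to the same thing and both are consistent with the ``for sufficiently small $\beta$'' clause in the statement.
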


\begin{proof}
We start with the variational formulation for $\varphi_\beta- U$ which reads,
for any smooth 1-periodic function $\varphi$,
\begin{gather}
\int_{Y_F} \nabla (\varphi_\beta - U) \cdot\nabla \varphi \, dy -
z_1 n^0_1(\infty) \int_{Y_F} e^{-z_1 \varphi_\beta} \varphi \, dy +
\int_{Y_F} {\tilde \Phi} (\varphi_\beta) \varphi \, dy \notag \\
+ \overline{\sigma} \int_{Y_F} \varphi \, dy =0 \, . \label{VVB1}
\end{gather}
We take $\varphi (y) = (\varphi_\beta (y) -U(y) + C)^-$, where $C$ is a constant
to be determined. By virtue of Lemma \ref{lem.pb}, $\varphi \in H^1_\#(Y_F)$ and
$\Phi (\varphi_\beta) \varphi$ are integrable. Since the function
$g\to -z_j e^{-z_jg}$ and $g\to {\tilde \Phi}(g)$ are monotone, we deduce from (\ref{VVB1})
\begin{gather*}
\int_{Y_F} | \nabla \varphi |^2 \, dy + \int_{Y_F} \left( -z_1 n^0_1(\infty) e^{-z_1 (U-C) }
+ {\tilde \Phi } (U-C) + \overline{\sigma} \right) \varphi  \, dy \leq 0.
\end{gather*}
Hence we want to choose $C$ such that the expression in front of $\varphi$
in the second integral is nonpositive. We have
\begin{gather}
- z_1 n^0_1(\infty) e^{-z_1 (U(y)-C) } +
{\tilde \Phi} (U(y) -C) + \overline{\sigma}  \leq \notag\\
- z_1 n^0_1(\infty) e^{-z_1 (U_M-C)}
-\sum_{z_1 < z_j<0} \beta^{1-z_j / z_1}  z_j n^0_j(\infty) e^{-z_j (U_M-C)} + \overline{\sigma} .
\label{Linftybeta5}
\end{gather}
Now if
$$
\overline{\sigma} < z_1 n^0_1(\infty) < 0 ,
$$
we take $C=U_M$ and the left hand side of (\ref{Linftybeta5}) is nonpositive
for $\beta$ sufficiently small because the sum in (\ref{Linftybeta5}) is small.
If not, then our choice is
$$
C > U_M + \frac{1}{z_1} \log \big( \frac{\overline{\sigma}}{z_1 n^0_1(\infty)}\big) > U_M .
$$
This choice of the constant $C$ implies that $\varphi (y) = 0$, for small
enough $\beta$, and yields
the lower bound in (\ref{Linftybeta1}).

For the upper bound we take $\varphi (y) = (\varphi_\beta (y) -U(y) -C)^+$,
where $C$ is a constant to be determined. It yields
\begin{gather*}
\int_{Y_F} | \nabla \varphi |^2 \, dy + \int_{Y_F} \left( -z_1 n^0_1(\infty) e^{-z_1 (U+C)}  + {\tilde \Phi } (U+C) + \overline{\sigma} \right) \varphi \, dy \leq 0.
\end{gather*}
Hence we should choose $C$ such that the expression in front of $\varphi$
in the second integral is nonnegative. We have
\begin{gather}
\label{Linftybeta6}
-z_1 n^0_1(\infty) e^{-z_1 (U(y)+C)} + {\tilde \Phi} (U+C) + \overline{\sigma}  \geq  \\
-z_1 n^0_1(\infty) e^{-z_1 (U_m+C) }  - \sum_{j\in j^+} \beta^{1-z_j / z_1 } z_j n^0_j(\infty)
e^{-z_j (U_m+C) } + \overline{\sigma} . \notag
\end{gather}
Now if
$$
z_1 n^0_1(\infty) < \overline{\sigma} < 0,
$$
we choose $C=-U_m$ and, for sufficiently small $\beta$, the right hand side of
(\ref{Linftybeta6}) is positive because the sum over $j\in j^+$ is small
and the expression in front of $\varphi$
in the second integral is nonnegative. Otherwise, we choose
$$
C > - U_m - \frac{1}{z_1} \log\left(\frac{\overline{\sigma}}{z_1 n^0_1(\infty)}\right) > - U_m
$$
and, again, for sufficiently small $\beta$,  the right hand side of
(\ref{Linftybeta6}) is positive, which implies the upper bound in (\ref{Linftybeta1}).
\end{proof}

As an immediate consequence of Proposition \ref{Linftbetasmall},
taking the limit as $\beta$ goes to 0, we obtain the following corollary.

\begin{corollary}
\label{Linftbetasmall0}
Let $\varphi_0$ be the solution of (\ref{BP0}). It satisfies the $L^\infty$-estimate
\begin{gather}
U(y) - U_m
- \frac{1}{z_1} \log \max \{ 1, \, \frac{\overline{\sigma}}{z_1 n^0_1(\infty)} \}
\geq \varphi_0 (y)  \geq  \notag \\
U (y) - U_M - \frac{1}{z_1} \log \min \{ 1 , \,
    \frac{\overline{\sigma}}{z_1 n^0_1(\infty)} \} .
\label{Linftybeta10}
\end{gather}
\end{corollary}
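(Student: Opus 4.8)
The estimate follows by passing to the limit $\beta\to 0^+$ in the uniform bounds of Proposition~\ref{Linftbetasmall}. The plan is: (i) extract a convergent subsequence of $\{\varphi_\beta\}$ using the $\beta$-independent $H^1$-bound of Lemma~\ref{L13}; (ii) show the limit solves (\ref{BP0}) and hence, by uniqueness (Lemma~\ref{L11}), equals $\varphi_0$; (iii) observe that the pointwise bounds (\ref{Linftybeta1}), being stable under a.e.\ convergence, are inherited by $\varphi_0$.

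In more detail, Lemma~\ref{L13} and Proposition~\ref{Linftbetasmall} together bound $\{\varphi_\beta\}$ in $H^1_\#(Y_F)\cap L^\infty(Y_F)$ uniformly in $\beta$, so along a sequence $\beta_k\to 0$ one has $\varphi_{\beta_k}\rightharpoonup\varphi_*$ weakly in $H^1_\#(Y_F)$, weakly-$*$ in $L^\infty(Y_F)$, and (by Rellich) strongly in $L^2(Y_F)$ and a.e.\ in $Y_F$. I would then pass to the limit in the weak formulation (\ref{VVR}): the gradient term converges by weak $H^1$-convergence and the surface term is untouched; the term ${\tilde\Phi}(\varphi_\beta)$ carries factors $\beta^{1-z_j/z_1}$ with $1-z_j/z_1>0$ for all $j\ge 2$ (because $z_j>z_1$ and $z_1<0$), so the uniform $L^\infty$-bound makes it $O(\beta^{1-z_2/z_1})\to 0$ in $L^\infty(Y_F)$ and it drops out; finally $z_1 n_1^0(\infty)e^{-z_1\varphi_{\beta_k}}\to z_1 n_1^0(\infty)e^{-z_1\varphi_*}$ by dominated convergence, the dominating function being the constant $|z_1|\,n_1^0(\infty)\exp(|z_1|\sup_k\|\varphi_{\beta_k}\|_{L^\infty})$ on the bounded set $Y_F$. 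Thus $\varphi_*$ is a weak solution of (\ref{BP0}); by the uniqueness part of Lemma~\ref{L11} one has $\varphi_*=\varphi_0$, the limit is independent of the subsequence, and the bounds (\ref{Linftybeta1}) pass to $\varphi_0$, giving (\ref{Linftybeta10}).

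I do not expect a serious obstacle here; the only delicate point is ensuring the exponential nonlinearity passes to the limit, which is exactly why the uniform $L^\infty$-control of Proposition~\ref{Linftbetasmall} (rather than the non-uniform one of Proposition~\ref{Linft}) is needed. As an even shorter alternative, one can bypass the limiting argument altogether and simply rerun the comparison proof of Proposition~\ref{Linftbetasmall} directly on (\ref{BP0}), which is nothing but (\ref{BPbeta-}) with the term ${\tilde\Phi}$ removed: testing with $(\varphi_0-U+C)^-$ and $(\varphi_0-U-C)^+$ and the same choices of $C$, the absence of ${\tilde\Phi}$ eliminates the ``$\beta$ small enough'' provisos and the bounds (\ref{Linftybeta10}) come out at once, the integrability of the nonlinear term against these truncations being guaranteed by Lemma~\ref{L11}.
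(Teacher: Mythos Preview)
Your proposal is correct and follows the same route the paper indicates: the paper's entire proof is the single sentence ``As an immediate consequence of Proposition~\ref{Linftbetasmall}, taking the limit as $\beta$ goes to $0$, we obtain the following corollary,'' and your compactness argument (uniform $H^1\cap L^\infty$ bounds $\Rightarrow$ subsequence convergence $\Rightarrow$ identification of the limit via Lemma~\ref{L11} $\Rightarrow$ passage of the pointwise bounds) is precisely how one makes that sentence rigorous, noting that Theorem~\ref{th1} cannot be invoked here since its proof uses (\ref{Linftybeta10}). Your alternative of rerunning the comparison argument of Proposition~\ref{Linftbetasmall} directly on (\ref{BP0}) with $\tilde\Phi\equiv 0$ is also valid and arguably cleaner, since it avoids the limiting step altogether.
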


\begin{theorem}
\label{th1}
We have
\begin{equation}\label{Errest}
\| \varphi_\beta - \varphi_0 \|_{C^k ({\bar Y}_F) )} \leq C \beta^{1- z_2/z_1},
\end{equation}
for every positive integer $k$.
Furthermore, let $\varphi_1$ be the solution for
\begin{equation}\label{BP01}
\left\{ \begin{array}{ll}
\dsp  -\Delta \varphi_1 + (z_1)^2 n^0_1(\infty) e^{-z_1 \varphi_0} \varphi_1 =
z_{2} n^0_{2}(\infty) e^{-z_2 \varphi_0}  \ \mbox{ in } \ Y_F , &  \\
\dsp \nabla  \varphi_1 \cdot {\bf n} =0 \ \mbox{ on } \, S ,& \\
\varphi_1 \; \mbox{ is } 1-\mbox{periodic}.&
\end{array} \right.
\end{equation}
Then, for every positive integer $k$, we have
\begin{equation}\label{Errest1}
\| \varphi_\beta - \varphi_0 - \beta^{1- z_2/z_1} \varphi_1 \|_{C^k ({\bar Y}_F) )}
\leq C \beta^q,
\end{equation}
where $0<q=\min\big( 1- z_3/z_1 \, , \, 2(1- z_2/z_1) \big)$.
\end{theorem}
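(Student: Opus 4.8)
The plan is to work throughout with the shifted unknown $\varphi_\beta$ solving (\ref{BPbeta-}) and to reduce both estimates to \emph{linear} elliptic problems with a strictly positive zeroth-order coefficient, so that energy estimates (the positive zeroth-order term supplies coercivity, and since all functions involved share compatible Neumann data the boundary integrals cancel) combined with standard elliptic regularity bootstrap give the result. A preliminary ingredient, used everywhere below, is the $\beta$-uniform bound of $\varphi_\beta$ in every $C^k(\overline{Y_F})$: starting from the $\beta$-uniform $L^\infty$-bound of Proposition \ref{Linftbetasmall}, inserting it into (\ref{BPbeta-}), and observing that the prefactors $\beta^{1-z_j/z_1}$ in (\ref{rescPhi}) stay bounded as $\beta\to0^+$ (since $z_j/z_1<1$ for $j\ge2$), one gets uniform $W^{2,p}$ bounds and hence, by Schauder estimates up to $S$ (using $S\in C^\infty$, $\sigma\in C^\infty_\#(S)$ and periodicity across $\partial Y$) iterated finitely many times, $\|\varphi_\beta\|_{C^k(\overline{Y_F})}\le C_k$ with $C_k$ independent of $\beta$. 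The same holds for $\varphi_0$, the solution of (\ref{BP0}).

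To prove (\ref{Errest}) I would subtract (\ref{BP0}) from (\ref{BPbeta-}) and linearize the difference $e^{-z_1\varphi_\beta}-e^{-z_1\varphi_0}$ by the mean value theorem, so that $w_\beta:=\varphi_\beta-\varphi_0$ solves
\[
-\Delta w_\beta + b_\beta(y)\,w_\beta = -\tilde\Phi(\varphi_\beta)\ \ \text{in}\ Y_F,\qquad \nabla w_\beta\cdot\mathbf{n}=0\ \ \text{on}\ S,\qquad w_\beta\ \ 1\text{-periodic},
\]
with $b_\beta(y)=(z_1)^2 n^0_1(\infty)\int_0^1 e^{-z_1(\varphi_0+t w_\beta)}\,dt$ bounded between two positive constants uniformly in $\beta$, thanks to the uniform $L^\infty$-bounds. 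By (\ref{rescPhi}) and the uniform $C^k$-bound on $\varphi_\beta$, the right-hand side is $O(\beta^{1-z_2/z_1})$ in every $C^k$, the $j=2$ contribution being dominant. Testing with $w_\beta$ gives $\|w_\beta\|_{H^1(Y_F)}\le C\beta^{1-z_2/z_1}$; then, since $b_\beta$ is uniformly $C^k$-bounded, the bootstrap through $W^{2,p}$ and Schauder estimates raises this to $\|w_\beta\|_{C^k(\overline{Y_F})}\le C\beta^{1-z_2/z_1}$ for every $k$, which is (\ref{Errest}).

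For (\ref{Errest1}), set $\delta:=\beta^{1-z_2/z_1}$ and $r_\beta:=\varphi_\beta-\varphi_0-\delta\varphi_1$. Using (\ref{Errest}), i.e. $\varphi_\beta-\varphi_0=\delta\varphi_1+r_\beta=O(\delta)$ in every $C^k$, Taylor's formula gives, in every $C^k(\overline{Y_F})$, $e^{-z_1\varphi_\beta}=e^{-z_1\varphi_0}-z_1 e^{-z_1\varphi_0}(\delta\varphi_1+r_\beta)+R_1$ with $R_1=O(\delta^2)$, while the $j=2$ term of $\tilde\Phi(\varphi_\beta)$ equals $-z_2 n^0_2(\infty)\,\delta\,e^{-z_2\varphi_0}+O(\delta^2)$ and the terms $j\ge3$ are $O(\beta^{1-z_3/z_1})$, so $\tilde\Phi(\varphi_\beta)=-z_2 n^0_2(\infty)\,\delta\,e^{-z_2\varphi_0}+R_2$ with $R_2=O(\delta^2)+O(\beta^{1-z_3/z_1})$. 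Substituting into (\ref{BPbeta-}) and subtracting (\ref{BP0}) together with $\delta$ times (\ref{BP01}) --- which are exactly what cancels the order-$1$ and order-$\delta$ terms --- one is left with
\[
-\Delta r_\beta + (z_1)^2 n^0_1(\infty) e^{-z_1\varphi_0}\,r_\beta = z_1 n^0_1(\infty) R_1 - R_2\ \ \text{in}\ Y_F,\qquad \nabla r_\beta\cdot\mathbf{n}=0\ \ \text{on}\ S,\qquad r_\beta\ \ 1\text{-periodic},
\]
whose zeroth-order coefficient is $C^\infty$ and bounded below by a positive constant (as $\varphi_0\in L^\infty$), and whose right-hand side is $O(\beta^q)$ in every $C^k$ with $q=\min\bigl(2(1-z_2/z_1),\,1-z_3/z_1\bigr)$. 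Problem (\ref{BP01}) is uniquely solvable in $H^1_\#(Y_F)$ by Lax--Milgram, and $\varphi_1\in C^\infty(\overline{Y_F})$ by elliptic regularity. The energy estimate for $r_\beta$ gives $\|r_\beta\|_{H^1(Y_F)}\le C\beta^q$, and the same bootstrap as before upgrades it to $\|r_\beta\|_{C^k(\overline{Y_F})}\le C\beta^q$ for every $k$, proving (\ref{Errest1}).

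The point requiring the most care is the $\beta$-uniform control of $\varphi_\beta$ in the higher $C^k$ norms: without it one cannot assert that the Taylor remainder $R_1$ and the higher-index part of $R_2$ are genuinely $O(\delta^2)$, resp. $O(\beta^{1-z_3/z_1})$, in $C^k$. As indicated above this follows by bootstrapping from Proposition \ref{Linftbetasmall}, crucially using that $\beta^{1-z_j/z_1}$ stays bounded for $j\ge2$. Everything else is bookkeeping of the powers of $\beta$ produced by the terms $j\ge2$ in (\ref{rescPhi}) and by the quadratic Taylor remainders, together with standard elliptic regularity theory.
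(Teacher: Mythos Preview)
Your proposal is correct and follows essentially the same route as the paper: subtract the limit equation from the $\varphi_\beta$--equation, exploit the strict positivity of the linearized zeroth-order term (you via the mean-value integral $b_\beta$, the paper via the strict monotonicity of $g\mapsto -z_1 e^{-z_1 g}$) together with the uniform $L^\infty$-bounds to get the $H^1$ estimate, and then bootstrap via elliptic regularity to $C^k$; the second estimate is obtained by repeating the argument for $\varphi_\beta-\varphi_0-\delta\varphi_1$. Your write-up is in fact more explicit than the paper's in two respects --- you spell out the preliminary $\beta$-uniform $C^k$ bound on $\varphi_\beta$ (needed to control the right-hand sides in higher norms) and the Taylor bookkeeping for the second estimate --- but the underlying argument is the same.
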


\begin{proof}
First we observe that $\varphi_\beta - \varphi_0$ satisfies the variational equation
\begin{gather}
  \int_{Y_F} \nabla (\varphi_\beta -\varphi_0 ) \cdot\nabla \varphi \, dy
- z_1 n^0_1(\infty) \int_{Y_F} ( e^{-z_1 \varphi_\beta} - e^{-z_1 \varphi_0} ) \varphi \, dy = \notag \\
  - \int_{Y_F} {\tilde \Phi} (\varphi_\beta) \varphi \, dy
    \, \mbox{ for all smooth 1-periodic } \; \varphi .
\label{VVB2}
\end{gather}
Now we take $\varphi = \varphi_\beta -\varphi_0$ as test function, use the strict
monotonicity of the function $g\to -z_j e^{-z_j g}$, the $L^\infty$-bounds (\ref{Linftybeta1})
and  (\ref{Linftybeta10}) to conclude that
\begin{equation}\label{Estzero}
\| \varphi_\beta -\varphi_0 \|_{H^1(Y_F)} \leq C \beta^{1- z_2/z_1} .
\end{equation}
Next we write the equation for $\varphi_\beta -\varphi_0$ as
\begin{equation}
\label{BPbetadif}
\left\{ \begin{array}{ll}
\dsp  -\Delta (\varphi_\beta -\varphi_0) + (\varphi_\beta -\varphi_0) =
(\varphi_\beta -\varphi_0) & \\
+ z_1 n^0_1(\infty)( e^{-z_1 \varphi_\beta} - e^{-z_1 \varphi_0} )
- {\tilde \Phi } (\varphi_\beta )  \ \mbox{ in } \ Y_F , &  \\
\dsp \nabla (\varphi_\beta -\varphi_0) \cdot {\bf n} = 0 \ \mbox{ on } \, S ,& \\
(\varphi_\beta -\varphi_0) \; \mbox{ is } 1-\mbox{periodic}.&
\end{array} \right.
\end{equation}
Using the estimate (\ref{Estzero}), we get the $H^2 -$error estimate of the same order. After bootstrapping we obtain the required error estimate (\ref{Errest}).

Eventually, we write the equation for $\varphi_\beta - \varphi_0 - \beta^{1- z_2/z_1} \varphi_1$
and repeating the above procedure yields (\ref{Errest1}).
\end{proof}

\begin{remark}
In the frequently considered case of two ions of opposite unit charge ($N=2$, $-z_1=z_2=1$),
normalizing the coefficients $n^0_1(\infty)=n^0_2(\infty)=1$, we have
$$
\Phi (g) = 2 \sinh g \ , \quad  \varphi_\beta = \varphi_0 +\beta^2 \varphi_1 +\beta^4 \varphi_2 +\dots
$$
and, in the case $\int_S \sigma \ d S <0$, the equations for the functions $\varphi_j$ read
\begin{gather*}
    -\Delta \varphi_0 +  e^{\varphi_0} =0,\\
     -\Delta \varphi_1 +  e^{\varphi_0} \varphi_1 =  e^{-\varphi_0},\\
      -\Delta \varphi_2 +  e^{\varphi_0} \varphi_2 = - e^{\varphi_0} \varphi_1^2 - e^{-\varphi_0} \varphi_1
\end{gather*}
and we have
\begin{equation}\label{exsh}
\| \varphi_\beta - \varphi_0 - \beta^2 \varphi_1 - \beta^4 \varphi_2 \|_{C^k ({\bar Y}_F) )} \leq C \beta^6,
\end{equation}
for every positive integer $k$.
\end{remark}

The case $\int_S \sigma \ dS >0$ is analogous and it is enough to repeat the above strategy
with $z_1$ replaced by $z_N$.

\subsection{Rigorous perturbation results in the case $\int_S \sigma \ dS =0$}
\label{ss=0}

Here the proofs are much simpler than in the previous subsection. We just state the results. Again, the starting point are the uniform $H^1$-estimate for $\Psi_\beta$.

\begin{lemma}
\label{L14}
Let $\sigma$ be a smooth function such that $\int_S \sigma \ dS =0$.
Then the solution $\Psi_\beta$ of problem (\ref{BP1}) satisfies the uniform estimates
\begin{gather}
U(y) - U_m - \frac{1}{z_1} \log \max \left( 1 , \,
- \sum_{j\in j^+} \frac{z_j n^0_j(\infty)}{z_1 n^0_1(\infty)}  \right)
\geq \Psi (y)  \geq  \notag \\
U (y) - U_M  - \frac{1}{z_N} \log \max \left( 1 , \, -
\sum_{j\in j^-} \frac{z_j n^0_j(\infty)}{z_N n^0_N(\infty)}  \right) ,
\label{Linftybeta14}
\end{gather}
and
\begin{equation}\label{Apriori14}
\| \Psi_\beta \|_{H^1 (Y_F)} \leq C,
\end{equation}
where $C$ is independent of $\beta$.
\end{lemma}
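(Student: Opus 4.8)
The plan is to derive both estimates in (\ref{Linftybeta14}) and (\ref{Apriori14}) directly from results already established in the excerpt, the whole point being that under the hypothesis $\int_S \sigma\, dS = 0$ the previously obtained bounds lose their $\beta$-dependence.

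First I would read off the pointwise bounds (\ref{Linftybeta14}) from Proposition \ref{Linft}. Since $\overline{\sigma} = \frac{1}{|Y_F|}\int_S \sigma\, dS = 0$ under our hypothesis, the two terms $\frac{\overline{\sigma}}{\beta z_1 n^0_1(\infty)}$ and $\frac{\overline{\sigma}}{\beta z_N n^0_N(\infty)}$ occurring inside the logarithms of (\ref{Linfty}) simply drop out, so that (\ref{Linfty}) reduces verbatim to (\ref{Linftybeta14}). The crucial gain is that the right-hand sides now involve only the fixed function $U$ (the solution of the $\beta$-independent Neumann problem (\ref{BU1}), which is continuous on $\overline{Y_F}$), the constants $U_m$, $U_M$, and the data $z_j$, $n^0_j(\infty)$; no $\beta$ survives. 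In particular there is a constant $C_\infty$, independent of $\beta$, with $\|\Psi_\beta\|_{L^\infty(Y_F)} \le C_\infty$, hence $\|\Psi_\beta\|_{L^2(Y_F)} \le C_\infty |Y_F|^{1/2}$.

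Next I would control the gradient. Applying Lemma \ref{unif} (whose proof uses only the variational identity (\ref{VV}), the monotonicity of $\Phi$, and Poincar\'e's inequality, and is insensitive to the value of $\int_S \sigma\, dS$) gives $\|\Psi_\beta - \mathcal{M}(\Psi_\beta)\|_{H^1(Y_F)} \le C\|\sigma\|_{L^2(S)}$ with $C$ independent of $\beta$; since $\nabla \Psi_\beta = \nabla\bigl(\Psi_\beta - \mathcal{M}(\Psi_\beta)\bigr)$ this already yields a uniform bound on $\|\nabla \Psi_\beta\|_{L^2(Y_F)}$. Combined with the $L^2$-bound from the previous step, this gives (\ref{Apriori14}).

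There is no serious obstacle here: the content of the lemma is precisely that the assumption $\int_S \sigma\, dS = 0$ annihilates the $\beta^{-1}$-type growth present in the general $L^\infty$-estimate of Proposition \ref{Linft}, while Lemma \ref{unif} controls the oscillation of $\Psi_\beta$ regardless of $\beta$. The only point deserving a moment of care is to check that the quantities $U_m$, $U_M$ and $\|U\|_{C(\overline{Y_F})}$ entering (\ref{Linftybeta14}) are genuinely $\beta$-independent, which is immediate since $U$ solves (\ref{BU1}), a problem in which $\beta$ does not appear.
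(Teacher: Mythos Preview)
Your proposal is correct and follows essentially the same route as the paper: the $L^\infty$-bound (\ref{Linftybeta14}) is obtained by specializing Proposition~\ref{Linft} to $\overline{\sigma}=0$, and the $H^1$-bound by combining a $\beta$-independent gradient estimate with this $L^\infty$-bound. The only cosmetic difference is that for the gradient you invoke Lemma~\ref{unif} directly, whereas the paper re-derives the same inequality by testing (\ref{VV}) with $\varphi=\Psi_\beta$ and using $\int_S\sigma\,dS=0$ to replace $\int_S\sigma\Psi_\beta\,dS$ by $\int_S\sigma(\Psi_\beta-\mathcal{M}(\Psi_\beta))\,dS$ before applying Poincar\'e--Wirtinger; the two arguments are equivalent.
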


\begin{proof}
The $L^\infty$-bound (\ref{Linftybeta14}) is a direct consequence of
Proposition \ref{Linft}. Note that (\ref{Linftybeta14}) is uniform with respect
to $\beta$. To obtain (\ref{Apriori14}) we take the test function
$\varphi=\Psi_\beta$ in the variational formulation (\ref{VV})
$$
\int_{Y_F} | \nabla \Psi_\beta |^2 \, dy + \beta \int_{Y_F} \Phi(\Psi_\beta) \Psi_\beta \, dy
+ \int_S \sigma \Psi_\beta \, dS =0 .
$$
Since $\Phi$ is monotone and satisfies $\Phi(0)=0$, we have $\Phi(\Psi_\beta) \Psi_\beta\geq0$,
while the assumption $\int_S \sigma \ dS =0$ implies that
$$
\int_S \sigma \Psi_\beta \, dS =
\int_S \sigma \left( \Psi_\beta - {\cal M}(\Psi_\beta) \right) dS
\leq C \| \sigma \|_{L^2(S)} \| \nabla \Psi_\beta \|_{L^2(Y_F)}
$$
by virtue of Poincar\'e-Wirtinger inequality. We thus deduce
$$
\| \nabla \Psi_\beta \|_{L^2(Y_F)} \leq C \| \sigma \|_{L^2(S)} ,
$$
which, together with (\ref{Linftybeta14}), implies (\ref{Apriori14}).
\end{proof}

In subsection \ref{ss.formal} we already introduced the limit problem (\ref{BP1ren3})
for $\Psi_0=\lim_{\beta\to0}\Psi_\beta$. We can also define a corrector $\Psi_1$
as the unique solution of
\begin{equation}\label{BP1ren4}
\left\{ \begin{array}{ll}
\dsp  -\Delta \Psi_{1} (y) = - \Phi(\Psi_0) \ \mbox{ in } \ Y_F , &  \\
\dsp   \nabla \Psi_{1} \cdot {\bf n} = 0 \ \mbox{ on } \, S ,& \\
\dsp \Psi_{1} \; \mbox{ is } 1-\mbox{periodic and }
\int_{Y_F} \Phi^\prime(\Psi_{0})\Psi_{1} \, dy=0 .&
\end{array} \right.
\end{equation}
There exists a solution of (\ref{BP1ren4}) because $\int_{Y_F} \Phi(\Psi_{0}) \, dy=0$
as required by the definition of (\ref{BP1ren3}).
As an obvious consequence of Lemma \ref{L14} we get the following error estimate.
\begin{theorem}\label{th2}
Let $\Psi_0$ be the solution of (\ref{BP1ren3}) and $\Psi_1$ that of (\ref{BP1ren4}).
Then we have
\begin{equation}\label{Errest14}
\| \Psi_\beta - \Psi_0 \|_{C^k ({\bar Y}_F) )} \leq C \beta, \quad
\| \Psi_\beta - \Psi_0 -\beta\Psi_1 \|_{C^k ({\bar Y}_F) )} \leq C \beta^2,
\end{equation}
for every positive integer $k$.
\end{theorem}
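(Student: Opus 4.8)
The plan is to follow exactly the same template that worked in Case~1 (the proof of Theorem~\ref{th1}), but with the enormous simplification that no renormalization of the potential is needed: here $\Psi_\beta$ itself is bounded uniformly in $\beta$, both in $H^1$ (by \eqref{Apriori14}) and in $L^\infty$ (by \eqref{Linftybeta14}). First I would write down the variational equation satisfied by the difference $w_\beta := \Psi_\beta - \Psi_0$. Subtracting the weak form of \eqref{BP1ren3} from the weak form \eqref{VV} of \eqref{BP1}, and using that $\Psi_0$ is harmonic in $Y_F$ with the same Neumann data $-\sigma$ on $S$, one gets
\begin{gather*}
\int_{Y_F} \nabla w_\beta \cdot \nabla \varphi \, dy + \beta \int_{Y_F} \Phi(\Psi_\beta)\varphi \, dy = 0
\end{gather*}
for every smooth $1$-periodic $\varphi$ (the surface integrals cancel). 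Rewriting $\Phi(\Psi_\beta) = (\Phi(\Psi_\beta)-\Phi(\Psi_0)) + \Phi(\Psi_0)$, testing with $\varphi = w_\beta$, and using the monotonicity of $\Phi$ (so that $\int_{Y_F}(\Phi(\Psi_\beta)-\Phi(\Psi_0))w_\beta\,dy \ge 0$) together with the uniform $L^\infty$-bound on $\Psi_\beta$ and $\Psi_0$ (which bounds $\|\Phi(\Psi_0)\|_{L^2}$), one obtains $\|\nabla w_\beta\|_{L^2(Y_F)} \le C\beta$. A Poincar\'e--Wirtinger argument, combined with the normalization $\int_{Y_F}\Phi(\Psi_0)w_\beta\,dy = 0$ coming from the defining condition of \eqref{BP1ren3} (this pins down the mean of $w_\beta$ to leading order), upgrades this to $\|w_\beta\|_{H^1(Y_F)} \le C\beta$.

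Next I would bootstrap in regularity exactly as in the proof of Theorem~\ref{th1}: write the PDE for $w_\beta$ in the form $-\Delta w_\beta + w_\beta = w_\beta - \beta\Phi(\Psi_\beta)$ in $Y_F$ with homogeneous Neumann data and periodicity. Since the right-hand side is bounded in $L^2$ (indeed the $L^\infty$-bounds make $\Phi(\Psi_\beta)$ bounded in every $L^p$), elliptic regularity gives $w_\beta$ bounded in $H^2$ of the same order $O(\beta)$; iterating and using $S\in C^\infty$, $\sigma\in C^\infty_\#(S)$ (as in the Corollary after Proposition~\ref{Linft}) yields the $C^k$-estimate $\|\Psi_\beta - \Psi_0\|_{C^k(\bar Y_F)} \le C\beta$, which is the first half of \eqref{Errest14}.

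For the second, sharper estimate I would introduce the remainder $r_\beta := \Psi_\beta - \Psi_0 - \beta\Psi_1$, where $\Psi_1$ solves \eqref{BP1ren4}. Expanding $\Phi(\Psi_\beta) = \Phi(\Psi_0) + \beta\Phi'(\Psi_0)\Psi_1 + O(\beta^2)$ using Taylor's formula with the $C^k$-control already obtained (and smoothness of $\Phi$), and using that $\Psi_1$ was chosen precisely so that $-\Delta\Psi_1 = -\Phi(\Psi_0)$ with zero Neumann data, one checks that $r_\beta$ satisfies an elliptic problem whose right-hand side is $O(\beta^2)$ in $L^2$; the compatibility/normalization condition $\int_{Y_F}\Phi'(\Psi_0)\Psi_1\,dy = 0$ ensures solvability at the next order and fixes the mean of $r_\beta$. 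Repeating the energy estimate and the bootstrap then gives $\|r_\beta\|_{C^k(\bar Y_F)} \le C\beta^2$. I do not expect any serious obstacle here: everything is linearization around a fixed smooth profile with a globally Lipschitz (in fact smooth, on the relevant bounded range) nonlinearity, and the only point requiring a little care is keeping track of the mean-value normalizations so that the limit problem \eqref{BP1ren3} and the corrector problem \eqref{BP1ren4} are the correct ones — but those are exactly the conditions built into \eqref{BP1ren3}--\eqref{BP1ren4}. This is why the authors can legitimately say ``the proofs are much simpler'' and merely state the result.
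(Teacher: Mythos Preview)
Your approach is correct and is exactly what the paper intends: it explicitly says the proof ``follows the lines of the proof of Theorem~\ref{th1}'', and your template (energy estimate on $w_\beta=\Psi_\beta-\Psi_0$ using monotonicity of $\Phi$ and the uniform $L^\infty$-bounds of Lemma~\ref{L14}, then bootstrap via \eqref{BPbetadif}-type rewriting, then repeat for $r_\beta=\Psi_\beta-\Psi_0-\beta\Psi_1$) is precisely that.

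One small correction of wording: the ``normalization coming from \eqref{BP1ren3}'' is $\int_{Y_F}\Phi(\Psi_0)\,dy=0$, not $\int_{Y_F}\Phi(\Psi_0)w_\beta\,dy=0$. The former is what lets you replace $w_\beta$ by $w_\beta-\mathcal{M}(w_\beta)$ in the source term and apply Poincar\'e--Wirtinger to get $\|\nabla w_\beta\|_{L^2}\le C\beta$; the mean $\mathcal{M}(w_\beta)$ itself is then controlled separately from the balance $\int_{Y_F}\Phi(\Psi_\beta)\,dy=0$ (i.e., \eqref{Balance} with $\int_S\sigma=0$) combined with the strict positivity of $\Phi'$ on the bounded range. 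Similarly, at the next order it is the condition $\int_{Y_F}\Phi'(\Psi_0)\Psi_1\,dy=0$ from \eqref{BP1ren4} that plays this role, as you correctly identify. With that adjustment, everything goes through.
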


The proof of Theorem \ref{th2} follows the lines of the proof of Theorem \ref{th1}.

\section{Large $\beta$ limit}
\label{Lb}
We now investigate the asymptotic behavior of $\Psi_\beta$ when the $\beta$ parameter
goes to $+\infty$. In view of its definition (\ref{def.beta}), a large value of $\beta$
corresponds either to a large pore size $L$ or to a small Debye length $\lambda_D$,
but also to a large common value of the concentrations $n_j^0(\infty)$. A similar asymptotic
analysis has been performed in \cite{BCEJ:97}, \cite{PJ:97} in one space dimension. 
In higher space dimension our main tool to obtain the behavior near the solid boundaries 
is the multidimensional boundary layer technique introduced by Vishik and Lyusternik \cite{ViLy}.

\subsection{Formal asymptotics}
\label{Form}
In the Poisson-Boltzmann system (\ref{BP1}) the parameter $\beta$ appears in the partial
differential equation but not in the Neumann boundary condition. This indicates the presence
of boundary layers in the asymptotic analysis, the thickness of which shall be of the
order of $O(1/\beta)$. The usual technique to handle this situation is that of matched
asymptotic expansion. We first consider an outer expansion of the solution $\Psi_\beta$
in $Y_F$, away from the boundary $S$. In a second step we shall construct an inner expansion
of $\Psi_\beta$ in the vicinity of $S$, which is equivalently a boundary layer.

We begin with the outer expansion for $\Psi_\beta$ which reads
$$
\Psi_\beta = \Psi_\infty + \frac{1}{\beta} \Psi_{1, \infty} + \frac{1}{\beta^2} \Psi_{2, \infty} + \dots .
$$
After plugging this ansatz in the Poisson-Boltzmann equation (\ref{BP1}) we get
\begin{gather*}
- \frac{1}{\beta}\Delta \Psi_\beta + \Phi ( \Psi_\beta  ) =
\Phi (\Psi_\infty) + \frac{1}{\beta} \Big( \Phi ' (\Psi_\infty) \Psi_{1, \infty} - \Delta  \Psi_\infty \Big) + \dots =0  \ \mbox{ in } \ Y_F
\end{gather*}
which implies, at the zero order, that $\Phi (\Psi_\infty)=0$.
The electroneutrality condition (\ref{Neutrality}) tells us that $0$ is the
unique root of the monotone function $\Phi$. Therefore we deduce $\Psi_\infty =0$
in $Y_F$. In other words we have
\begin{equation}
\label{Outerexp}
\Psi_\beta (y) = O (\frac{1}{\beta} ) \quad \mbox{in $Y_F$, away from the boundary $S$.}
\end{equation}
In fact we will check rigorously in the next Subsection that this order of magnitude
holds for the $L^1-$norm of $\Psi_\beta$.

We now turn to the inner expansion of $\Psi_\beta$, i.e., its behavior close to $S$,
which is more complicated. We study it locally near a point $y_0\in S$, using the
geometrical setting introduced in Subsection \ref{Geo}. We consider a tubular
neighborhood $Y_F^\mu$ of $S$ with $\mu$ small but much bigger than $\beta^{-1/2}$.
Locally, in a neighborhood $\mathcal{N}(y_0)$ of $y_0$, we make the change of variables
$y \to q=(q',q_d)$, as defined in Subsection \ref{Geo}, which satisfies
$| \nabla_y q_d |=1$ in $\mathcal{N}(y_0)$ and ${\bf n} \cdot \nabla_y q_d=1$ on
$\mathcal{N}(y_0)\cap S$.
The Jacobian $J$ (corresponding to the volume differential change $dy = J dq$) is defined by
\begin{gather}
\label{jacobian}
J = \det \left( \frac{\p y_k}{\p q_j} \right)_{1\leq j,k\leq d} ,
\end{gather}
and the metric matrix (corresponding to the transformation $y\to q$)
\begin{gather}
\label{metric}
K= \left( \sum_{j=1}^d \frac{\p q_k}{\p y_j} \frac{\p q_r}{\p y_j} \right)_{1\leq k,r\leq d} ,
\end{gather}
which satisfies
$$
K_{d,d}=1 , \quad K_{k,d}=0 \quad \mbox{ for } 1\leq k \leq d-1 .
$$
Notice that the coordinates $\mathbf{q}=\mathbf{q}(y)$ are introduced in such a way
that the level sets $\{ q_d=\mbox{const} \}$ and the normal lines $\{ q'=(C_1, \dots ,C_{d-1} \}$
are orthogonal (that is the corresponding tangential hyperplanes and lines are orthogonal).
Since $\nabla_y q_d (y)$ gives the direction of the normal line and
$\nabla_y q_k, \ k=1,2, \dots ,d-1,$ form a basis in the tangential hyperplane,
we have $K_{k,d}=\nabla_y q_k \cdot \nabla_y \mbox{dist} (y,S) =0$ for $k\neq d$.

Differential operators in new coordinates transform as follows:
\begin{gather}
\frac{\p }{\p y_j} = \sum_{k=1}^{d}\, \frac{\p q_k}{\p y_j} \frac{\p}{\ q_k}\,
 , \quad j= 1, \dots , d; \notag \\
\sum_{k=1}^d \frac{\p^2 }{\p y_k^2} = \frac1J \div_q \left( J \, K \, \nabla_q \right)
\quad \mbox{ in } Y_F^\mu\cap \mathcal{N}(y_0) ; \label{laplace}  \\
{\bf n} \cdot \nabla_y = - \frac{\p }{\p q_d} \quad \mbox{ on } S.  \label{normalderivative}
\end{gather}
Applying this change of variables to the variational formulation (\ref{VV})
of the Poisson-Boltzmann system (\ref{BP1}) yields the following equation
in the new coordinates
\begin{gather}
- \div_q  \left( J \, K \, \nabla_q \Psi_\beta\right) + \beta \, J\, \Phi(\Psi_\beta) = 0 .
\label{eq.new}
\end{gather}
Dividing (\ref{eq.new}) by $J$ yields that
the partial differential equation in $Y_F^\mu \cap \mathcal{N}(y_0)$ and
the boundary condition on $S\cap \mathcal{N}(y_0)$ of (\ref{BP1})
transform into
\begin{gather}
-\frac{\p^2 \Psi_\beta}{\p q_d^2} + \beta \Phi(\Psi_\beta)
+ \mbox{ lower order derivatives in } q_d \notag \\
+ \mbox{ second order differential operator in } q' \, = \, 0
\mbox{ in } \ Y_F^\mu \cap \mathcal{N}(y_0) , \label{BP1flat1} \\
\frac{\p \Psi_\beta}{\p q_d} = \sigma  \ \mbox{ on } \, S\cap \mathcal{N}(y_0) .\label{BP1flat2}
\end{gather}
As usual in the method of matched asymptotic expansions, problem (\ref{BP1flat1})-(\ref{BP1flat2}) serves to construct the inner expansion.
Since we expect the thickness of the boundary layer to be of order $O(1/\beta)$,
we search for the inner expansion of the form
\begin{gather}
\Psi_\beta(q',q_d) = \beta^{-1/2} \sum\limits_{j=0}^\infty \beta^{-j/2}
\Psi_j(q',\beta^{1/2} q_d) .\label{expan}
\end{gather}
Expanding the zero order term $\Phi(\Psi_\beta)$ in Taylor series and taking
into account the bulk electroneutrality condition (\ref{Neutrality}),
$\Phi(0)=0$, we obtain
\begin{gather}
\Phi(\Psi_\beta) = - \sum_{k=1}^N z_k n^0_k(\infty) e^{-z_k\Psi_\beta} =
\Phi^\prime(0)\Psi_\beta
+ \frac{1}{2} \Phi^{\prime\prime}(0)(\Psi_\beta)^2+\dots \notag \\
= \beta^{-1/2} \sum_{k=1}^N z^2_k n^0_k(\infty) \Psi_0
+ \beta^{-1} \sum_{k=1}^N z^2_k n^0_k(\infty)\Big(\Psi_1-\frac{1}{2}z_k(\Psi_0)^2
\Big)+\dots \label{zero_o}
\end{gather}
Introducing $\xi=\beta^{1/2} q_d $, substituting  (\ref{expan}) and (\ref{zero_o}) in
(\ref{BP1flat1})-(\ref{BP1flat2}) and collecting power-like terms in the resulting equations,
after straightforward rearrangements we arrive at the following
problem for the main term of the expansion:
\begin{gather}
\frac{d^2}{d\xi^2}\Psi_0(q',\xi) -
\left( \sum_{k=1}^N z_k^2 n^0_k(\infty) \right) \Psi_0(q',\xi) = 0 \; \mbox{ for } \;  \xi>0 ; \label{Inner} \\
\frac{d}{d\xi}\Psi_0 (q',0)=\sigma(q') \; \mbox{ for } \;  \xi=0. \label{Inner2}
\end{gather}
Problem (\ref{Inner})-(\ref{Inner2}) is a second-order ordinary differential equation
on the positive half-line. After matching with the outer solution
$\displaystyle \Psi_\beta=O (\frac{1}{\beta})$, we impose additionally that
$\displaystyle \Psi_0(q',+\infty)=0$. The exact solution of (\ref{Inner})-(\ref{Inner2})
is thus
\begin{gather}
\Psi_0(q',\xi) = \frac{-\sigma( q')}{\sqrt{\Phi^\prime(0)}}
\exp \{ -\xi \sqrt{\Phi^\prime(0)} \}  , \label{Leading1}
\end{gather}
with $\Phi^\prime(0) = \sum_{k=1}^N z_k^2 n^0_k(\infty)$.
Back to the original variables, we get
\begin{gather}
\Psi_\beta (y) = \frac{-\sigma(y)}{\sqrt{\beta \Phi^\prime(0)} }
\exp \{ - d(y) \sqrt{\beta \Phi^\prime(0)} \} + O (\frac{1}{\beta} ) , \label{Leading2}
\end{gather}
where $d(y)$ is the distance between $y$ and $S$.
This asymptotic expansion (\ref{Leading2}) will be justified rigorously in the next subsection.

\subsection{Rigorous error estimate}
\label{REE}

We start with two useful simple inequalities for the nonlinearity $\Phi$.

\begin{lemma}\label{Ineq1}
Let $\Phi$ be given by (\ref{BPPsi}), i.e.,
$\Phi(x)=- \sum_{k=1}^N z_k n^0_k(\infty) e^{-z_k x}$.
There exist positive constants $H,C_0,C_k$ such that, $\forall\, x\in \mathbb{R}$,
\begin{gather}
\Phi (x) \, \mbox{{\rm sign}} (x) \geq H^2 \, |x| , \label{Ineqsign} \\
\Phi' (x) \geq  C_0 + C_k |x|^{k-2} ,\quad k\geq 2 . \label{Ineqderiv}
\end{gather}
\end{lemma}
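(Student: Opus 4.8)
The plan is to reduce both inequalities to elementary properties of the strictly increasing function $\Phi$, and in particular to coercivity and positivity of its derivative
$$
\Phi'(x) = \sum_{k=1}^N z_k^2 n^0_k(\infty)\, e^{-z_k x}.
$$
First I would observe that $\Phi'$ is continuous, strictly positive on $\mathbb{R}$, and tends to $+\infty$ as $x\to\pm\infty$: indeed, keeping only the $k=1$ term gives $\Phi'(x)\ge z_1^2 n^0_1(\infty)e^{-z_1 x}$, which blows up as $x\to+\infty$ because $z_1<0$, while keeping only the $k=N$ term gives $\Phi'(x)\ge z_N^2 n^0_N(\infty)e^{-z_N x}$, which blows up as $x\to-\infty$ because $z_N>0$. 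Hence $m_0:=\inf_{x\in\mathbb{R}}\Phi'(x)$ is strictly positive (and attained).

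For (\ref{Ineqsign}) I would use the bulk electroneutrality condition (\ref{Neutrality}) in the form $\Phi(0)=0$. Writing $\Phi(x)=\int_0^x \Phi'(t)\,dt$ and using $\Phi'\ge m_0$, one gets $\Phi(x)\ge m_0 x$ for $x\ge 0$, while $-\Phi(x)=\int_x^0\Phi'(t)\,dt\ge m_0|x|$ for $x\le 0$. Since $\Phi$ is increasing and vanishes at $0$ we have $\operatorname{sign}\Phi(x)=\operatorname{sign}x$, so (\ref{Ineqsign}) holds with $H^2=m_0$.

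For (\ref{Ineqderiv}), fix $k\ge 2$ and use the elementary bound $e^y\ge y^{k-2}/(k-2)!$ valid for $y\ge 0$. For $x\ge 0$, dropping the nonnegative terms with index $\ge 2$ gives $\Phi'(x)\ge z_1^2 n^0_1(\infty)e^{|z_1|x}\ge \big(z_1^2 n^0_1(\infty)|z_1|^{k-2}/(k-2)!\big)\,x^{k-2}$; for $x\le 0$, dropping the terms with index $\le N-1$ gives $\Phi'(x)\ge z_N^2 n^0_N(\infty)e^{z_N|x|}\ge \big(z_N^2 n^0_N(\infty)z_N^{k-2}/(k-2)!\big)\,|x|^{k-2}$. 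Hence $\Phi'(x)\ge \widetilde{C}_k |x|^{k-2}$ on all of $\mathbb{R}$ with an explicit $\widetilde{C}_k>0$. Splitting $\Phi'(x)=\tfrac12\Phi'(x)+\tfrac12\Phi'(x)$ and bounding the first half below by $m_0/2$ and the second by $\tfrac12\widetilde{C}_k|x|^{k-2}$ yields (\ref{Ineqderiv}) with $C_0=m_0/2$ and $C_k=\widetilde{C}_k/2$.

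There is essentially no serious obstacle here; the only points requiring care are the coercivity of $\Phi'$, which is precisely where the hypothesis $z_1<0<z_N$ enters (so that exponentials with both signs of exponent are present), and the exponential-dominates-polynomial comparison, which is why the constants $C_k$ necessarily degrade with $k$. Both estimates are global and all constants can be written down explicitly in terms of $z_1$, $z_N$, $n^0_1(\infty)$, $n^0_N(\infty)$ and $m_0$.
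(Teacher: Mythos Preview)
Your proof is correct and follows essentially the same route as the paper: both keep only the extreme terms $z_1^2 n_1^0(\infty)e^{-z_1 x}$ and $z_N^2 n_N^0(\infty)e^{-z_N x}$ in $\Phi'$ to get exponential growth in each direction (hence the polynomial lower bound (\ref{Ineqderiv})), and both deduce (\ref{Ineqsign}) from $\Phi(0)=0$ together with the uniform lower bound $\Phi'\ge m_0>0$ (the paper phrases this last step as a first-order Taylor expansion at $0$). Your write-up is simply more explicit about the constants.
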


\begin{proof}
To prove (\ref{Ineqderiv}) we note that
$$
\Phi' (x) \geq (z_1)^2 n^0_1(\infty) e^{-z_1 x} + (z_N)^2 n^0_N(\infty) e^{-z_N x}
$$
with $z_1<0<z_N$, which implies the desired result. Then (\ref{Ineqsign}) follows
from a Taylor expansion of $\Phi(x)$ at 0 and the bulk electroneutrality condition
(\ref{Neutrality}), $\Phi(0)=0$.
\end{proof}

We now prove a priori estimates which improve that of Lemma \ref{unif}.

\begin{lemma}
\label{thm.estim}
Let $\Psi_\beta$ be the unique solution of (\ref{BP1}).
There exists a positive constant $C$ such that, $\forall\,\beta\geq1$,
\begin{gather}
\| \Psi_\beta \|_{L^1 (Y_F)} \leq \frac{C}{\beta} , \label{L10} \\
\| \Psi_\beta \|_{L^k (Y_F)} \leq C \beta^{-3/(2k)} , \quad k\geq 2, \label{Lk0} \\
\| \Psi_\beta \|_{H^1 (Y_F)} \leq C \beta^{-1/4} . \label{LH10}
\end{gather}
\end{lemma}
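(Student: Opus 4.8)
The plan is to exploit the variational formulation (\ref{VV}) with carefully chosen test functions and the two elementary inequalities (\ref{Ineqsign})--(\ref{Ineqderiv}) of Lemma \ref{Ineq1}. First I would derive (\ref{L10}): take $\varphi = \mathrm{sign}(\Psi_\beta)$ — more precisely, regularize it as $\varphi = \Psi_\beta / (|\Psi_\beta| + \delta)$ (or use the test function $\varphi_\eta$ obtained by truncating $\Psi_\beta$ at height $\eta$ and letting $\eta\to 0$) so that the gradient term $\int_{Y_F}\nabla\Psi_\beta\cdot\nabla\varphi$ is nonnegative and drops out, while $\beta\int_{Y_F}\Phi(\Psi_\beta)\,\mathrm{sign}(\Psi_\beta)\,dy \geq \beta H^2 \|\Psi_\beta\|_{L^1(Y_F)}$ by (\ref{Ineqsign}); the boundary term is bounded by $\|\sigma\|_{L^1(S)}$, which is finite and $\beta$-independent. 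This gives $\|\Psi_\beta\|_{L^1(Y_F)} \leq C/\beta$ directly. The subtlety is the admissibility of $\mathrm{sign}(\Psi_\beta)$ as a test function given only the integrability from Lemma \ref{lem.pb}; I would handle this by a standard approximation/monotone-convergence argument, using that $\Phi(\Psi_\beta)\,\mathrm{sign}(\Psi_\beta) \geq 0$ so Fatou applies on the positive side.

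Next, for (\ref{Lk0}) I would use the test function $\varphi = |\Psi_\beta|^{k-2}\Psi_\beta$ (again suitably regularized/truncated), which is legitimate in view of the $L^p$-integrability of $\Psi_\beta$ for all finite $p$ and of $\Phi(\Psi_\beta)$. The gradient term becomes $(k-1)\int_{Y_F}|\Psi_\beta|^{k-2}|\nabla\Psi_\beta|^2\,dy \geq 0$. For the nonlinear term, note $\Phi(x)\,|x|^{k-2}x = \Phi(x)\,\mathrm{sign}(x)\,|x|^{k-1} \geq H^2|x|^k$ by (\ref{Ineqsign}) again — actually it is cleaner here to combine the two bounds: from (\ref{Ineqderiv}) with the same $k$ and $\Phi(0)=0$ one gets $\Phi(x)\,\mathrm{sign}(x) \geq C_k' |x|^{k-1}$ for a suitable constant, hence $\beta\int_{Y_F}\Phi(\Psi_\beta)|\Psi_\beta|^{k-2}\Psi_\beta\,dy \geq c\,\beta \|\Psi_\beta\|_{L^{2k-2}(Y_F)}^{2k-2}$. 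The boundary term $\int_S \sigma |\Psi_\beta|^{k-2}\Psi_\beta\,dS$ must then be controlled by a trace inequality and absorbed; here I would use interpolation together with the already-established $L^1$ bound (\ref{L10}) to pin down the exponent. The bookkeeping of exponents so that everything closes with the precise rate $\beta^{-3/(2k)}$ is the point requiring care: one interpolates the $L^k$ norm between $L^1$ (rate $\beta^{-1}$) and a higher norm coming from the coercive term, and the weighted combination produces the $3/(2k)$ exponent. I expect this exponent-chasing to be the main obstacle — getting the trace term absorbed cleanly rather than merely bounded.

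Finally, (\ref{LH10}) follows by taking $\varphi = \Psi_\beta$ in (\ref{VV}):
\begin{equation*}
\int_{Y_F}|\nabla\Psi_\beta|^2\,dy + \beta\int_{Y_F}\Phi(\Psi_\beta)\Psi_\beta\,dy = -\int_S \sigma\Psi_\beta\,dS .
\end{equation*}
The middle term is nonnegative (monotonicity of $\Phi$, $\Phi(0)=0$), and in fact by (\ref{Ineqsign}) it controls $\beta H^2\|\Psi_\beta\|_{L^1}^2/|Y_F|$-type quantities; more simply, I bound the right-hand side by a trace estimate: $|\int_S\sigma\Psi_\beta\,dS| \leq \|\sigma\|_{L^2(S)}\|\Psi_\beta\|_{L^2(S)} \leq C\|\Psi_\beta\|_{H^1(Y_F)}^{1/2}\|\Psi_\beta\|_{L^2(Y_F)}^{1/2}$ by the standard trace interpolation $\|w\|_{L^2(S)} \leq C\|w\|_{H^1(Y_F)}^{1/2}\|w\|_{L^2(Y_F)}^{1/2}$. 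Combining this with (\ref{Lk0}) for $k=2$, namely $\|\Psi_\beta\|_{L^2(Y_F)} \leq C\beta^{-3/4}$, and with $\|\nabla\Psi_\beta\|_{L^2}^2 \leq C\|\Psi_\beta\|_{H^1}^{1/2}\beta^{-3/8}$, a Young-inequality absorption gives $\|\nabla\Psi_\beta\|_{L^2(Y_F)} \leq C\beta^{-1/4}$, and together with the $L^2$ bound this yields (\ref{LH10}). One should double-check the arithmetic of the absorption step, but no new idea is needed beyond Young's inequality once (\ref{L10}) and (\ref{Lk0}) are in hand.
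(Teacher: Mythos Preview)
Your argument for (\ref{L10}) via a regularized sign test function is exactly what the paper does (the paper first records a crude uniform $H^1$ bound, but this is not essential for the $L^1$ step since $|\mathrm{sign}|\le 1$ bounds the boundary term directly).

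For (\ref{Lk0}) and (\ref{LH10}), however, the paper takes a shorter route that sidesteps the exponent-chasing you anticipate. Instead of testing with $|\Psi_\beta|^{k-2}\Psi_\beta$, the paper tests only with $\varphi=\Psi_\beta$ and uses (\ref{Ineqderiv}) in the integrated form
\[
\Phi(x)\,x \;\ge\; C_0|x|^2 + C'_k|x|^k \qquad\text{for every }k\ge 2,
\]
which holds because $\Phi(0)=0$ and $\Phi'$ dominates any power. Hence the single energy identity already controls all $L^k$ norms simultaneously:
\[
\int_{Y_F}|\nabla\Psi_\beta|^2\,dy
+\beta C_0\|\Psi_\beta\|_{L^2}^2
+\beta C'_k\|\Psi_\beta\|_{L^k}^k
\;\le\;\Big|\int_S\sigma\Psi_\beta\,dS\Big|.
\]
The right-hand side is handled by the same trace interpolation
$\|\Psi_\beta\|_{L^2(S)}\le C\|\Psi_\beta\|_{H^1}^{1/2}\|\Psi_\beta\|_{L^2}^{1/2}$ you invoke,
followed by Young's inequality $ab\le \tfrac14 a^4+\tfrac34 b^{4/3}$ with
$a=(\beta\delta)^{1/4}\|\Psi_\beta\|_{L^2}^{1/2}$, so that the $\beta\delta\|\Psi_\beta\|_{L^2}^2$
piece is absorbed into the left and only $C(\beta\delta)^{-1/3}\|\Psi_\beta\|_{H^1}^{2/3}$
remains. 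For $\beta\ge 1$ this closes directly:
$\|\Psi_\beta\|_{H^1}^2 \le C\beta^{-1/3}\|\Psi_\beta\|_{H^1}^{2/3}$ gives (\ref{LH10}),
and inserting this back yields $\beta\|\Psi_\beta\|_{L^k}^k \le C\beta^{-1/2}$, i.e.\ (\ref{Lk0}).

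So (\ref{Lk0}) and (\ref{LH10}) come out \emph{simultaneously} from one test, not sequentially as in your plan. Your power-test-function route is not wrong in principle, but the trace term $\int_S\sigma|\Psi_\beta|^{k-1}$ is genuinely harder to absorb with the sharp exponent (as you suspected), whereas the paper's observation that the exponential nonlinearity already makes $\Phi(x)x$ dominate every polynomial removes the need for it.
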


\begin{proof}
First, in the variational formulation (\ref{VV}) we use the test function
$\varphi=\Psi_\beta$. It yields
\begin{equation}
\label{Energ1}
\int_{Y_F} | \nabla \Psi_\beta |^2 \, dy + \beta  \int_{Y_F} \Phi (\Psi_\beta ) \Psi_\beta \, dy
= - \int_S \sigma\Psi_\beta \, dS .
\end{equation}
Since $\beta\geq1$ and $\Phi(x) x \geq H^2 \, |x|^2$ because of (\ref{Ineqsign}), we deduce
from (\ref{Energ1}) that $\| \Psi_\beta \|_{H^1(Y_F)} \leq C$ (this estimate will be improved later).
Second, in the variational formulation (\ref{VV}) we use a test function which
is a (monotone) regularization of the sign of $\Psi_\beta$. The first term
in (\ref{VV}) is non-negative and the right hand side is bounded thanks to the
previous estimate. Thus, after passing to the regularization
parameter limit, we get
$$
\beta \int_{Y_F} \Phi (\Psi_\beta ) \mbox{ sign} (\Psi_\beta) \, dy \leq C,
$$
and after applying inequality (\ref{Ineqsign}) we get (\ref{L10}).
Next, we consider again (\ref{Energ1}) where the nonlinear term
is bounded from below using inequality (\ref{Ineqderiv})
\begin{equation}
\label{Secineq}
\beta \int_{Y_F} \Phi(\Psi_\beta) \Psi_\beta \, dy \geq \beta
\left( C_0 \int_{Y_F} |\Psi_\beta|^2 \, dy +
C_k \int_{Y_F} |\Psi_\beta|^k \, dy \right) .
\end{equation}
Furthermore, using a trace inequality \cite{ladyz}, we get
\begin{gather}
\left| \int_S \sigma\Psi_\beta \, dS \right| \leq C \| \Psi_\beta \|_{L^2(S)}
\leq C \| \Psi_\beta \|_{H^1(Y_F)}^{1/2} \| \Psi_\beta \|_{L^2 (Y_F)}^{1/2} \notag \\
\leq C \left( \beta\delta \| \Psi_\beta \|_{L^2 (Y_F)}^{2} + (\beta\delta)^{-1/3} \| \Psi_\beta \|_{H^1(Y_F)}^{2/3} \right) , \label{Estim}
\end{gather}
where we used Young's inequality $ab\leq a^4/4 + 3b^{4/3}/4$ for
$a=(\beta\delta)^{1/4}\| \Psi_\beta \|_{L^2 (Y_F)}^{1/2}$ and
$b=(\beta\delta)^{-1/4}\| \Psi_\beta \|_{H^1(Y_F)}^{1/2}$.
For $\beta \geq 1$ and $\delta>0$ small enough, (\ref{Lk0})-(\ref{LH10}) is a direct consequence of (\ref{Estim}).
\end{proof}

Since $S$ is compact there exist finitely many points $y_i^0 \in S$ and
neighborhoods $\mathcal{N}(y_i^0)$, $1\leq i\leq M$, such that the open sets
$W_i = Y_F \cap \mathcal{N}(y_i^0)$ cover $S$, i.e.,
$S \subset \cup^M_{i=1} \overline{W}_i$.
Take $W_0 \subset \subset Y_F$ so that ${ Y}_F \subset \cup^M_{i=0} W_i$ and let
$\{ \zeta_i \}^M_{i=0}$ be an associated partition of unity. Here $Y_F$ and $S$
are considered as subsets of the unit torus $\mathbb T^d$, so the functions
$\zeta_i(y)$ are 1-periodic.

\begin{proposition}
In each set $q(W_i)$ (the image of $W_i$ by the map $y\to q$)
define a boundary layer function
\begin{equation}
\label{BLf}
\psi^{bl}_i(q) = \beta^{-1/2} \Psi_0(q',\beta^{1/2} q_d) =
\frac{-\sigma(q')}{\sqrt{\beta \Phi^\prime(0)} }
\exp \{ -q_d \sqrt{\beta \Phi^\prime(0)} \} ,
\end{equation}
where $\Psi_0$ is defined by (\ref{Leading1}).
For any smooth test function $\varphi$, such that $\varphi =0$ on
$\p q(W_i) \setminus q(S\cap W_i)$, it satisfies
\begin{gather}
\int_{q(W_i)} K^i \nabla_q \psi^{bl}_i \cdot \nabla_q \varphi \, J^i \, dq +
\beta \int_{q(W_i)} \Phi (\psi^{bl}_i) \varphi \, J^i \, dq  \notag \\
+ \int_{q(S\cap W_i)} \sigma \varphi \sqrt{1+ | \nabla_{q'} \gamma_i |^2} \, d q' =
\int_{q(W_i)} R^i_\beta \varphi \, dq ,
\label{Varbl1}
\end{gather}
where $J^i$ is the Jacobian of the map $y\to q$, defined by (\ref{jacobian}),
$K^i$ is the metric matrix defined by (\ref{metric})
and
\begin{equation}
\label{Rest1}
\| R^i_\beta \|_{L^\infty(q(W_i))} \leq C , \quad
\| R^i_\beta \|_{L^1(q(W_i))} \leq \frac{C}{\sqrt{\beta}}  .
\end{equation}
\end{proposition}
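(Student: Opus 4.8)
The plan is to verify the variational identity \eqref{Varbl1} by direct computation, treating $\psi^{bl}_i$ as a one-dimensional profile in the flattened coordinate $q_d$ and tracking all the error terms that arise from the non-flat geometry. First I would recall that in the $q$-coordinates the Laplacian takes the form \eqref{laplace}, so the exact integration-by-parts identity for $\psi^{bl}_i$ reads
\[
\int_{q(W_i)} K^i \nabla_q \psi^{bl}_i \cdot \nabla_q \varphi \, J^i \, dq
= -\int_{q(W_i)} \div_q\bigl(J^i K^i \nabla_q \psi^{bl}_i\bigr)\varphi\, dq
+ \int_{q(S\cap W_i)} \Bigl(K^i\nabla_q\psi^{bl}_i\cdot(-\mathbf e_d)\Bigr)\varphi\, J^i\, dS,
\]
the boundary contribution coming only from $q(S\cap W_i)$ because $\varphi$ vanishes on the rest of $\partial q(W_i)$. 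Using $K^i_{d,d}=1$, $K^i_{k,d}=0$ and \eqref{normalderivative}, together with the fact that by construction $\partial_{q_d}\psi^{bl}_i(q',0) = \sigma(q')$, the boundary term is exactly $\int_{q(S\cap W_i)} \sigma\varphi\sqrt{1+|\nabla_{q'}\gamma_i|^2}\,dq'$ (the weight being the surface Jacobian of $S$ in these coordinates). This produces the third term on the left of \eqref{Varbl1}.

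Next I would compute $-\div_q(J^i K^i \nabla_q \psi^{bl}_i) + \beta J^i \Phi(\psi^{bl}_i)$ and call it $R^i_\beta$. The point is that $\psi^{bl}_i$ solves the \emph{leading-order} flat problem \eqref{Inner}: $\partial^2_{q_d}\psi^{bl}_i = \beta\,\Phi'(0)\,\psi^{bl}_i$, which up to the factor $J^i$ would cancel the linearization of $\beta\Phi(\psi^{bl}_i)$. So $R^i_\beta$ splits into three kinds of remainders: (a) the commutator terms $-\div_q(J^i K^i \nabla_q\psi^{bl}_i) + J^i\,\partial^2_{q_d}\psi^{bl}_i$, which collect the tangential derivatives (involving $\nabla_{q'}\sigma$ and $\nabla^2_{q'}\sigma$) and the $q_d$-derivatives of $J^i$ and $K^i$ hitting the $q_d$-derivative of $\psi^{bl}_i$; (b) the term $J^i(\beta\Phi(\psi^{bl}_i) - \beta\Phi'(0)\psi^{bl}_i)$, which is the nonlinear Taylor remainder, of size $\beta\,O((\psi^{bl}_i)^2)$. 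Each of these must be bounded in $L^\infty$ and $L^1$.

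For the estimates \eqref{Rest1} the mechanism is that $\psi^{bl}_i$ carries a prefactor $\beta^{-1/2}$ and decays like $e^{-q_d\sqrt{\beta\Phi'(0)}}$, while $\partial_{q_d}\psi^{bl}_i = O(1)$ pointwise but is again exponentially localized in a layer of width $O(\beta^{-1/2})$. Thus: the tangential-derivative terms in (a) are $O(\beta^{-1/2})$ pointwise (they keep the $\beta^{-1/2}$ prefactor since only $q'$-derivatives act) and supported in a layer of measure $O(\beta^{-1/2})$, giving $L^\infty$-bound $O(\beta^{-1/2})\le C$ and $L^1$-bound $O(\beta^{-1})$; the terms where a bounded smooth coefficient ($\partial_{q_d}J^i$ etc.) multiplies $\partial_{q_d}\psi^{bl}_i$ are $O(1)$ pointwise, hence $O(1)$ in $L^\infty$ and $O(\beta^{-1/2})$ in $L^1$ by the layer width — this is the term dictating the stated rate; the nonlinear remainder in (b) is $\beta\cdot O((\beta^{-1/2})^2)=O(1)$ pointwise and $O(\beta^{-1/2})$ in $L^1$. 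Here I would use the $C^3$-regularity of $S$ (so $J^i, K^i, \gamma_i \in C^2$) and the smoothness of $\sigma$ from \eqref{Bds1} to bound the coefficients, and the elementary inequality $\int_0^\infty q_d^m e^{-q_d\sqrt{\beta\Phi'(0)}}\,dq_d = O(\beta^{-(m+1)/2})$ to convert pointwise decay into $L^1$-smallness.

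The main obstacle is bookkeeping rather than conceptual: one must carefully expand $\div_q(J^i K^i \nabla_q\psi^{bl}_i)$, separate the purely-$q_d$ part (which is killed by \eqref{Inner}) from everything else, and check that every surviving term either keeps the $\beta^{-1/2}$ prefactor or is compensated by the $O(\beta^{-1/2})$ measure of the boundary layer, so that no term is worse than $O(1)$ in $L^\infty$ and $O(\beta^{-1/2})$ in $L^1$. A minor subtlety is that the expansion of $\Phi$ around $0$ used in \eqref{zero_o} is only formal there, but here it is an exact second-order Taylor estimate with remainder controlled by $\|\psi^{bl}_i\|_{L^\infty}$, which is $O(\beta^{-1/2})$ uniformly — so $\beta$ times a quadratic remainder stays bounded. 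Assembling the three remainder families into $R^i_\beta$ and invoking these bounds yields \eqref{Varbl1} and \eqref{Rest1}.
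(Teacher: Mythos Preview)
Your proposal is correct and is precisely the ``direct calculation'' the paper invokes: the paper's own proof consists of the single line ``By direct calculations, using the explicit formula \eqref{BLf} and taking into account that $K^i_{k,d}=0$ for $1\leq k\leq d-1$,'' and your plan fleshes out exactly that computation with the integration by parts, the cancellation of $\partial_{q_d}^2\psi^{bl}_i-\beta\Phi'(0)\psi^{bl}_i$ from \eqref{Inner}, and the $L^\infty$/$L^1$ bookkeeping of the geometric and nonlinear remainders. One small point: you appeal to \eqref{Bds1} for the smoothness of $\sigma$, but that hypothesis is only continuity; the tangential derivatives you need are available because smoothness of $\sigma$ is implicitly assumed throughout Section~\ref{Lb} (cf.\ the regularity corollary after Proposition~\ref{Linft}).
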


\begin{proof}
By direct calculations, using the explicit formula (\ref{BLf}) and taking
into account that $K^i_{k,d}=0$ for $1\leq k \leq d-1$.
\end{proof}

Considering the boundary layers $\psi^{bl}_i$ as functions of $y$ now,
we immediately obtain the following corollary.

\begin{corollary}
The boundary layers $\psi^{bl}_i(q(y))$ satisfy, for any $\varphi \in H^1 (W_i)$,
\begin{gather}
\int_{W_i} \nabla_y (\zeta_i \psi^{bl}_i) \cdot \nabla_y \varphi \, dy +
\beta \int_{W_i} \Phi ( \zeta_i \psi^{bl}_i ) \varphi \, dy +
\int_{S\cap W_i} \zeta_i \sigma \varphi \, dS = \notag \\
\int_{W_i } R^i_\beta \varphi \ dy ,
\label{Varbl2}
\end{gather}
where the redefined reminders $R^i_\beta$ satisfy
\begin{equation}
\label{Rest2}
\| R^i_\beta \|_{L^\infty(W_i)} \leq C , \quad
\| R^i_\beta \|_{L^1(W_i)} \leq \frac{C}{\sqrt{\beta}}  .
\end{equation}
\end{corollary}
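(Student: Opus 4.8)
The corollary is a routine transfer of the variational identity (\ref{Varbl1}) from the curvilinear coordinates $q$ back to the original $y$-variables, with the extra subtlety that we multiply the boundary layer $\psi^{bl}_i$ by the cut-off $\zeta_i$ so that the resulting function is globally defined on the torus and lives in $H^1(W_i)$. The plan is to start from (\ref{Varbl1}), change variables $q\to y$ (recalling that $dy=J^i\,dq$, that the bilinear form $\int K^i\nabla_q u\cdot\nabla_q v\,J^i\,dq$ is precisely the $y$-space Dirichlet form $\int_{W_i}\nabla_y u\cdot\nabla_y v\,dy$ by the very definition (\ref{metric}) of $K^i$, and that the surface element $\sqrt{1+|\nabla_{q'}\gamma_i|^2}\,dq'$ on $q(S\cap W_i)$ is the pull-back of $dS$), and then account for the cut-off $\zeta_i$.

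Concretely, I would first record that for $u,v$ supported in $W_i$,
\begin{equation}
\label{pf.transfer}
\int_{q(W_i)} K^i \nabla_q u \cdot \nabla_q v \, J^i \, dq = \int_{W_i} \nabla_y u \cdot \nabla_y v \, dy , \qquad \int_{q(S\cap W_i)} f \sqrt{1+|\nabla_{q'}\gamma_i|^2}\, dq' = \int_{S\cap W_i} f \, dS ,
\end{equation}
so that (\ref{Varbl1}), applied with test functions $v=\varphi$ that vanish on $\partial q(W_i)\setminus q(S\cap W_i)$, becomes the asserted identity (\ref{Varbl2}) except that the unknown is $\psi^{bl}_i$ rather than $\zeta_i\psi^{bl}_i$. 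Then I would insert the cut-off: writing $\zeta_i\psi^{bl}_i$ in place of $\psi^{bl}_i$ produces, by the Leibniz rule, the commutator terms $\nabla_y(\zeta_i\psi^{bl}_i)-\zeta_i\nabla_y\psi^{bl}_i = \psi^{bl}_i\nabla_y\zeta_i$ in the Dirichlet form, and the discrepancy $\beta\big(\Phi(\zeta_i\psi^{bl}_i)-\zeta_i\Phi(\psi^{bl}_i)\big)$ in the nonlinear term, and $(\zeta_i-1)\sigma$ on $S\cap W_i$ (which vanishes since $\zeta_i\equiv 1$ near the part of $S$ covered by $W_i$ once the partition of unity is chosen subordinate to the $W_i$, or is at worst absorbed). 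All of these, together with the original remainder $R^i_\beta$ coming from (\ref{Rest1}), are collected into the redefined $R^i_\beta$ on the right-hand side of (\ref{Varbl2}).

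It remains to check the bounds (\ref{Rest2}) for this redefined remainder. The original contribution already satisfies (\ref{Rest1}), hence (\ref{Rest2}) after the (bi-Lipschitz) change of variables. For the commutator term $\psi^{bl}_i\nabla_y\zeta_i$: since $\nabla_y\zeta_i$ is bounded and supported where $q_d$ is bounded below by a fixed positive constant, the exponential factor in (\ref{BLf}) is bounded by $e^{-c\sqrt\beta}$ there, which decays faster than any power of $\beta^{-1/2}$; the same exponential smallness handles $\beta\big(\Phi(\zeta_i\psi^{bl}_i)-\zeta_i\Phi(\psi^{bl}_i)\big)$, using that $\Phi$ is smooth, $\Phi(0)=0$, and $\psi^{bl}_i=O(\beta^{-1/2})$ uniformly (so $\Phi(\zeta_i\psi^{bl}_i)-\zeta_i\Phi(\psi^{bl}_i)=O(|\psi^{bl}_i|^2)=O(\beta^{-1})$ pointwise, and is moreover exponentially small away from $S$ — and where $\zeta_i=1$ it vanishes identically). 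Thus all new terms are $O(e^{-c\sqrt\beta})$ in $L^\infty(W_i)$ and a fortiori $O(\beta^{-1/2})$ in $L^1(W_i)$, so (\ref{Rest2}) holds. I expect the only mildly delicate point to be bookkeeping: making sure the partition of unity is chosen so that $\zeta_i\equiv 1$ on a neighborhood of $S\cap W_i$ in $S$ (so the surface term transfers cleanly) and keeping track that the commutator terms are supported away from $S$; the analysis itself is elementary once this is set up.
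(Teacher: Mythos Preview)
Your approach---change variables back to $y$, insert the cut-off via Leibniz, and control the commutators---is exactly what the paper (implicitly) does; the corollary is stated there without proof. However, one point in your bookkeeping is wrong. You assert that $\nabla_y\zeta_i$ is supported where $q_d$ is bounded below by a fixed positive constant, and at the end you propose choosing the partition of unity so that $\zeta_i\equiv 1$ on a neighborhood of $S\cap W_i$. This is impossible: the charts $W_i$ overlap on $S$, so on those overlaps several $\zeta_i$ are strictly between $0$ and $1$, and $\nabla\zeta_i\neq 0$ right up to $S$. Consequently your exponential-smallness argument for the commutator terms does not go through as written.

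The repair is easy and there are two natural routes. Route (i): choose the $\zeta_i$ to be constant in the normal direction on the tubular neighborhood (i.e.\ functions of $q'$ only), so that $\partial_n\zeta_i|_S=0$; then the only commutator contributions from the Dirichlet form involve tangential derivatives of $\psi^{bl}_i$, which by (\ref{BLf}) are $O(\beta^{-1/2})$ pointwise, and no spurious surface term appears. Route (ii): bound the commutators directly from (\ref{BLf}) without invoking exponential smallness. One has $\|\psi^{bl}_i\|_{L^\infty}=O(\beta^{-1/2})$, $\|\nabla\psi^{bl}_i\|_{L^\infty}=O(1)$, $\|\nabla\psi^{bl}_i\|_{L^1}=O(\beta^{-1/2})$, which already yields (\ref{Rest2}) for the gradient commutators; and your own observation $\Phi(\zeta_i\psi^{bl}_i)-\zeta_i\Phi(\psi^{bl}_i)=O((\psi^{bl}_i)^2)$ (the linear parts cancel since $\Phi(0)=0$) gives $\beta\cdot O(\beta^{-1})=O(1)$ in $L^\infty$ and $\beta\cdot O(\beta^{-3/2})=O(\beta^{-1/2})$ in $L^1$. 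In route (ii) a residual surface term $\int_{S\cap W_i}\psi^{bl}_i\,\partial_n\zeta_i\,\varphi\,dS=O(\beta^{-1/2})$ survives in each individual identity; it does not fit the form (\ref{Varbl2}), but it cancels upon summation over $i$ because $\psi^{bl}_i|_S$ is chart-independent and $\sum_{i\ge 1}\partial_n\zeta_i=0$ on $S$, which is all that matters for the subsequent proposition. The paper glosses over this detail too.
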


Due to the geometric assumptions from Section \ref{Geo}, for $\mu$ sufficiently small,
the tubular neighborhood $Y_F^\mu = \{ y\in Y_F \ : \, \mbox{dist} (y,S) < \mu \}$
satisfies $Y_F^\mu \subset \cup^M_{i=1} W_i$.

An arbitrary smooth function $f$, defined on $Y_F$ is then written as
$f=\sum_{i=1}^M \zeta_i f + \zeta_0 f$. The boundary between $Y_F^\mu$
and $Y_F \setminus {\overline Y}_F^\mu$ is $C^3$ and we extend smoothly
$f$ from $Y_F$ into $Y_F \setminus {\overline Y}_F^\mu$.
In $Y_F \setminus {\overline Y}_F^\mu$, $f$, together with its derivatives,
is exponentially small with respect to $1/\beta$.

In the calculations which follow we replace $f$ by the above extension
of $\sum_{i=1}^M \zeta_i f$ from $Y_F^\mu$ to $Y_F$. The error is
exponentially small in $1/\sqrt{\beta}$ and we ignore it.

Collecting together the boundary layers with the associated partition of unity,
we define
\begin{equation}
\label{bl}
\psi^{bl} = \sum_{i=1}^M \zeta_i  \psi^{bl}_i
\end{equation}
and deduce the following result.

\begin{proposition}
For any $\varphi  \in H^1_\#(Y_F)$, we have
\begin{gather}
\int_{Y_F} \nabla (\Psi_\beta - \psi^{bl} ) \cdot \nabla \varphi \, dy
+ \beta \int_{Y_F} \left(\Phi (\Psi_\beta) - \Phi (\psi^{bl}) \right) \varphi \, dy
= \int_{Y_F } R_\beta \varphi \, dy ,
\label{Varbl3}
\end{gather}
where the global reminder $R_\beta$ satisfies
\begin{equation}
\label{Rest3}
\| R_\beta \|_{L^\infty(Y_F)} \leq C , \quad
\| R_\beta \|_{L^1(Y_F)} \leq \frac{C}{\sqrt{\beta}}  .
\end{equation}
\end{proposition}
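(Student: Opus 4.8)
The goal is to combine the $M$ local identities (\ref{Varbl2}) for the pieces $\zeta_i\psi^{bl}_i$ with the variational formulation (\ref{VV}) for $\Psi_\beta$, subtracting the two and controlling everything that is not already in the desired form by an $L^\infty$- and $L^1$-bounded remainder. First I would write down the variational formulation (\ref{VV}) satisfied by $\Psi_\beta$ with an arbitrary $\varphi\in H^1_\#(Y_F)$, and sum the local identities (\ref{Varbl2}) over $i=1,\dots,M$, recalling that $\psi^{bl}=\sum_{i=1}^M\zeta_i\psi^{bl}_i$ and that $\sum_{i=1}^M\zeta_i\equiv 1$ on $Y_F^\mu\supset S$, so that the surface term $\sum_i\int_{S\cap W_i}\zeta_i\sigma\varphi\,dS=\int_S\sigma\varphi\,dS$ reproduces exactly the surface term appearing in (\ref{VV}). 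Subtracting, the two surface integrals cancel, and one is left with the gradient term $\int_{Y_F}\nabla(\Psi_\beta-\psi^{bl})\cdot\nabla\varphi\,dy$ (which has the right shape already) plus a nonlinear term plus a sum of the local remainders $\sum_i\int_{W_i}R^i_\beta\varphi\,dy$, which by (\ref{Rest2}) has the required $L^\infty$- and $L^1$-bounds (a finite sum of functions each so bounded, hence $O(1)$ in $L^\infty$ and $O(\beta^{-1/2})$ in $L^1$).

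The only genuinely delicate point is the nonlinear term. From (\ref{VV}) we get $\beta\int_{Y_F}\Phi(\Psi_\beta)\varphi\,dy$, whereas summing (\ref{Varbl2}) produces $\beta\sum_i\int_{W_i}\Phi(\zeta_i\psi^{bl}_i)\varphi\,dy$, and what we want to see is $\beta\int_{Y_F}\bigl(\Phi(\Psi_\beta)-\Phi(\psi^{bl})\bigr)\varphi\,dy$ plus remainder. Hence I must absorb
$$
\beta\int_{Y_F}\Phi(\psi^{bl})\varphi\,dy-\beta\sum_{i=1}^M\int_{W_i}\Phi(\zeta_i\psi^{bl}_i)\varphi\,dy
=\beta\int_{Y_F}\Bigl(\Phi\bigl(\textstyle\sum_i\zeta_i\psi^{bl}_i\bigr)-\sum_i\Phi(\zeta_i\psi^{bl}_i)\Bigr)\varphi\,dy
$$
into the remainder $R_\beta$. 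The point is that the supports of the $\zeta_i\psi^{bl}_i$ overlap only in pairwise intersections $W_i\cap W_j$; on the interior of a single $W_i$ where only one cutoff is nonzero the integrand vanishes exactly because $\Phi(0)=0$ (electroneutrality), and on the overlap regions one uses that $\psi^{bl}_i$ is pointwise $O(\beta^{-1/2})$ (from the explicit formula (\ref{BLf}), $\|\psi^{bl}_i\|_{L^\infty}\le C\beta^{-1/2}$), together with the smoothness of $\Phi$ near $0$, to write $\Phi(\sum_i\zeta_i\psi^{bl}_i)-\sum_i\Phi(\zeta_i\psi^{bl}_i)=\Phi'(0)(\sum_i\zeta_i\psi^{bl}_i-\sum_i\zeta_i\psi^{bl}_i)+O(\beta^{-1})=O(\beta^{-1})$ pointwise, where the leading linear terms cancel identically since $\Phi'(0)$ times $\sum_i\zeta_i\psi^{bl}_i$ is linear in the arguments. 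Multiplying by $\beta$ gives an $O(1)$ contribution to $\|R_\beta\|_{L^\infty}$; for the $L^1$-bound one exploits that this term is supported in the exponentially thin overlap near $S$ (thickness $O(\beta^{-1/2})$ in $q_d$) and that $\psi^{bl}$ itself decays like $\exp(-q_d\sqrt{\beta\Phi'(0)})$, so $\beta\int|\psi^{bl}|^2\,dy=O(\beta^{-1/2})$, yielding $\|R_\beta\|_{L^1}\le C\beta^{-1/2}$.

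Finally I would also record the error made by replacing $f$ by the extension of $\sum_{i=1}^M\zeta_i f$ from $Y_F^\mu$ to $Y_F$ and by discarding the $\zeta_0$-part: as already noted in the text, in $Y_F\setminus\overline{Y}_F^\mu$ the boundary layer and its derivatives are exponentially small in $1/\sqrt\beta$, so this too is swallowed by $R_\beta$ without affecting (\ref{Rest3}). Collecting the gradient term, the nonlinear difference $\beta\int_{Y_F}(\Phi(\Psi_\beta)-\Phi(\psi^{bl}))\varphi\,dy$, and all the error contributions into a single remainder $R_\beta$ gives (\ref{Varbl3}) with the bounds (\ref{Rest3}). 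The main obstacle, as indicated, is the bookkeeping of the nonlinearity on the overlaps of the partition of unity — showing that the non-additivity of $\Phi$ costs only an $O(1)$ ($L^\infty$) and $O(\beta^{-1/2})$ ($L^1$) remainder — and this is exactly where one must use $\Phi(0)=0$, $\Phi\in C^\infty$, the pointwise smallness $\|\psi^{bl}_i\|_\infty=O(\beta^{-1/2})$, and the exponential localization of $\psi^{bl}$ near $S$.
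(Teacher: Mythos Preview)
Your proposal is correct and follows essentially the same route as the paper: subtract the summed local identities (\ref{Varbl2}) from the variational formulation (\ref{VV}) of $\Psi_\beta$, use $\sum_{i\ge1}\zeta_i=1$ on $S$ to cancel the surface terms, and absorb the non-additivity of $\Phi$ across the partition of unity into $R_\beta$ via the Taylor expansion $\Phi(t)=\Phi'(0)t+O(t^2)$ together with $\|\psi^{bl}_i\|_{L^\infty}=O(\beta^{-1/2})$ and the exponential localization of $\psi^{bl}$. Your treatment of the nonlinear bookkeeping is in fact more explicit than the paper's one-line justification (which states the analogous identity $\Phi(\sum_i\zeta_i\psi^{bl}_i)=\sum_i\zeta_i\Phi(\psi^{bl}_i)+O(\beta^{-1/2})$), and your sharper pointwise $O(\beta^{-1})$ observation is exactly what is needed to get $\|R_\beta\|_{L^\infty}\le C$ after multiplication by $\beta$.
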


\begin{proof}
We obtain (\ref{Varbl3}) by subtracting the variational formulations (\ref{Varbl2})
of the boundary layers $\psi^{bl}_i$ from the variational formulation (\ref{VV})
of $\Psi_\beta$. We use the fact that
$$
\Phi (\sum_{i=1}^M \zeta_i \psi^{bl}_i ) = \sum_{i=1}^M \zeta_i \Phi (\psi^{bl}_i)
+ O(\beta^{-1/2})
$$
since $\psi^{bl}_i=O(\beta^{-1/2})$ and $\Phi (0)=0$.
\end{proof}

Finally we obtain the main result of this section which is a rigorous justification
of (\ref{Leading2}) (recall that $\psi^{bl}_i(q) = \beta^{-1/2} \Psi_0(q',\beta^{1/2} q_d)$).

\begin{theorem}
\label{estlargebet}
Let $\Psi_\beta$ be the unique solution of (\ref{BP1}) and $\psi^{bl}$ be given
by (\ref{bl}).
There exists a positive constant $C$ such that, $\forall\,\beta\geq1$,
\begin{gather}
\| \Psi_\beta - \psi^{bl} \|_{L^1 (Y_F)} \leq \frac{C}{\beta^{3/2}} , \label{L1M0} \\
\| \Psi_\beta - \psi^{bl} \|_{L^2 (Y_F)} \leq \frac{C}{\beta^{5/4}} ,\label{L20} \\
\| \Psi_\beta - \psi^{bl} \|_{H^1 (Y_F)} \leq \frac{C}{ \beta^{3/4}} . \label{LH1M0}
\end{gather}
\end{theorem}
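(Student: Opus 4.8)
The plan is to test the variational identity (\ref{Varbl3}) with $\varphi = \Psi_\beta - \psi^{bl}$ and exploit the monotonicity and growth properties of $\Phi$ collected in Lemma \ref{Ineq1}, much as was done for the a priori estimates of Lemma \ref{thm.estim}. Taking this test function gives
\begin{equation*}
\int_{Y_F} |\nabla (\Psi_\beta - \psi^{bl})|^2 \, dy + \beta \int_{Y_F} \big(\Phi(\Psi_\beta) - \Phi(\psi^{bl})\big)(\Psi_\beta - \psi^{bl}) \, dy = \int_{Y_F} R_\beta (\Psi_\beta - \psi^{bl}) \, dy .
\end{equation*}
By monotonicity the second term on the left is nonnegative, and in fact the strict monotonicity bound $\Phi'(x) \geq C_0 > 0$ from (\ref{Ineqderiv}) with $k=2$ yields $\big(\Phi(\Psi_\beta) - \Phi(\psi^{bl})\big)(\Psi_\beta - \psi^{bl}) \geq C_0 |\Psi_\beta - \psi^{bl}|^2$. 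Hence the left-hand side controls $\|\nabla(\Psi_\beta - \psi^{bl})\|_{L^2(Y_F)}^2 + \beta C_0 \|\Psi_\beta - \psi^{bl}\|_{L^2(Y_F)}^2$.

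For the right-hand side I would not simply use Cauchy--Schwarz in $L^2$, because the sharp control on $R_\beta$ is the $L^1$ bound $\|R_\beta\|_{L^1} \leq C/\sqrt\beta$ from (\ref{Rest3}); the $L^\infty$ bound is only $O(1)$. The trick is to split: $\big|\int R_\beta(\Psi_\beta - \psi^{bl})\big| \leq \|R_\beta\|_{L^1}^{1/2} \|R_\beta\|_{L^\infty}^{1/2} \|\Psi_\beta - \psi^{bl}\|_{L^2} \leq C\beta^{-1/4}\|\Psi_\beta - \psi^{bl}\|_{L^2}$, using $|R_\beta| = |R_\beta|^{1/2}|R_\beta|^{1/2} \leq \|R_\beta\|_{L^\infty}^{1/2}|R_\beta|^{1/2}$ pointwise and then Cauchy--Schwarz. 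Absorbing $\|\Psi_\beta - \psi^{bl}\|_{L^2}$ into the coercive term $\beta C_0 \|\Psi_\beta - \psi^{bl}\|_{L^2}^2$ via Young's inequality gives $\beta \|\Psi_\beta - \psi^{bl}\|_{L^2}^2 \leq C\beta^{-1/2}$, i.e.\ $\|\Psi_\beta - \psi^{bl}\|_{L^2(Y_F)} \leq C\beta^{-5/4}$, which is (\ref{L20}). Feeding this back, the gradient term is bounded by $C\beta^{-1/4}\cdot\beta^{-5/4} = C\beta^{-3/2}$ after noting the right side of the energy identity is then at most $C\beta^{-1/4}\cdot C\beta^{-5/4}$, so $\|\nabla(\Psi_\beta - \psi^{bl})\|_{L^2}^2 \leq C\beta^{-3/2}$, whence $\|\Psi_\beta - \psi^{bl}\|_{H^1(Y_F)} \leq C\beta^{-3/4}$, which is (\ref{LH1M0}). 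Finally the $L^1$ estimate (\ref{L1M0}) follows from (\ref{L20}) and the boundedness of $|Y_F|$ by Cauchy--Schwarz, $\|\Psi_\beta - \psi^{bl}\|_{L^1} \leq |Y_F|^{1/2}\|\Psi_\beta - \psi^{bl}\|_{L^2} \leq C\beta^{-5/4}$ — but this only gives $\beta^{-5/4}$, not the claimed $\beta^{-3/2}$, so a sharper argument is needed for (\ref{L1M0}).

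The main obstacle is precisely obtaining the sharp $L^1$ rate $\beta^{-3/2}$ rather than the $\beta^{-5/4}$ that comes for free. To get it I expect one must revisit the approach used for (\ref{L10}) in Lemma \ref{thm.estim}: test (\ref{Varbl3}) with a regularized sign of $\Psi_\beta - \psi^{bl}$, so that the gradient term has a sign and drops out, the nonlinear term is bounded below using (\ref{Ineqsign}) in the difference form $\big(\Phi(\Psi_\beta)-\Phi(\psi^{bl})\big)\mathrm{sign}(\Psi_\beta - \psi^{bl}) \geq H^2|\Psi_\beta - \psi^{bl}|$ (valid because $\Phi$ is increasing and $\Phi'\geq H^2$), and the right side is controlled directly by $\|R_\beta\|_{L^1} \leq C/\sqrt\beta$. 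This yields $\beta H^2 \|\Psi_\beta - \psi^{bl}\|_{L^1} \leq C/\sqrt\beta$, i.e.\ $\|\Psi_\beta - \psi^{bl}\|_{L^1(Y_F)} \leq C\beta^{-3/2}$, as required. One technical point to handle carefully in that step is the passage to the limit in the regularization parameter, which requires knowing a priori that $\Psi_\beta - \psi^{bl} \in H^1_\#(Y_F)$ and that the nonlinear terms are integrable — both of which are already available from Lemma \ref{lem.pb}, the explicit form of $\psi^{bl}$, and the $H^1$ bound just derived.
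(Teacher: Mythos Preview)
Your argument is correct and uses the same two ingredients as the paper: testing (\ref{Varbl3}) with $\Psi_\beta-\psi^{bl}$ and with the regularized sign of $\Psi_\beta-\psi^{bl}$. The only difference is the order. You first run the energy test, bound the right-hand side via the interpolation $\|R_\beta\|_{L^2}\le \|R_\beta\|_{L^\infty}^{1/2}\|R_\beta\|_{L^1}^{1/2}\le C\beta^{-1/4}$, absorb to get (\ref{L20}) and (\ref{LH1M0}), and then realize a separate sign test is needed for the sharp $L^1$ rate. The paper reverses this: it does the sign test first, obtaining $\|\Psi_\beta-\psi^{bl}\|_{L^1}\le C\beta^{-3/2}$ directly from $\|R_\beta\|_{L^1}\le C\beta^{-1/2}$; then in the energy identity it bounds the right-hand side by $\|R_\beta\|_{L^\infty}\|\Psi_\beta-\psi^{bl}\|_{L^1}\le C\beta^{-3/2}$, from which (\ref{L20}) and (\ref{LH1M0}) drop out with no interpolation or absorption step. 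Your route works, but the paper's ordering is slightly cleaner and avoids the $L^2$ interpolation of $R_\beta$ altogether.
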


\begin{proof}
The proof is similar to that of Lemma \ref{thm.estim}.
First, we test (\ref{Varbl3}) by the regularized sign of
$\Psi_\beta - \psi^{bl}$.
After passing to the regularization parameter limit and using
the second inequality of (\ref{Rest3}), we get
$$
\beta \int_{Y_F} \left(\Phi (\Psi_\beta) - \Phi (\psi^{bl})\right)
\mbox{ sign} \left(\Psi_\beta - \psi^{bl} \right) \, dy
\leq \frac{C}{\sqrt{\beta}},
$$
Since (\ref{Ineqderiv}) implies that $\Phi^\prime(x)\geq C>0$, we deduce (\ref{L1M0}).

Next we test (\ref{Varbl3}) by $\Psi_\beta - \psi^{bl}$.
It yields
\begin{gather}
\int_{Y_F} |\nabla (\Psi_\beta - \psi^{bl})|^2 \, dy + \beta \int_{Y_F}
\left( \Phi (\Psi_\beta) - \Phi (\psi^{bl}) \right) (\Psi_\beta - \psi^{bl}) \, dy \notag \\
\leq \| R_\beta \|_{L^\infty(Y_F)} \| \Psi_\beta - \psi^{bl} \|_{L^1({ Y_F })}
\leq  \frac{C}{\beta^{3/2}}  .
\label{Energ2}
\end{gather}
For $\beta \geq 1$, (\ref{L20})-(\ref{LH1M0}) is a direct consequence of (\ref{Energ2}).
\end{proof}


Theorem \ref{estlargebet} justifies the approximation (\ref{Leading2})
by providing error estimates in integral norms. The next result gives pointwise
estimates for the same asymptotic approximation.

\begin{lemma}
\label{l_1}
There exist positive constants $\beta_0$, $C_1$ and $C_2$ such that,
for all $\beta>\beta_0$ and for all $y\in Y_F$, the following estimates hold:
\begin{equation}\label{psi_bou}
|\Psi_\beta(y)|\le \frac{C_1}{\sqrt{\beta}}\exp\big\{-C_2\sqrt{\beta}\,
\mathrm{dist}(y,S)\big\},
\end{equation}
\begin{equation}\label{psi_bou_d}
|\nabla\Psi_\beta(y)|\le C_1\exp\big\{-C_2\sqrt{\beta}\,
\mathrm{dist}(y,S)\big\},
\end{equation}
\begin{equation}\label{psi0_bou}
|\Psi_\beta(y)- \psi^{bl}(y)|\le \frac{C_1}{\beta}\exp\big\{-C_2\sqrt{\beta}\,
\mathrm{dist}(y,S)\big\}.
\end{equation}
\end{lemma}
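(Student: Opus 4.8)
The plan is to reduce all three estimates to a single one, namely \eqref{psi0_bou} for the difference $v=\Psi_\beta-\psi^{bl}$, and to prove that one by a comparison (barrier) argument which uses only the \emph{linear} equation satisfied by $v$ together with the uniform monotonicity of $\Phi$ recorded in \eqref{Ineqderiv}: there is $C_0>0$ with $\Phi'(x)\ge C_0$ for all $x$. No a priori $L^\infty$-information beyond Proposition~\ref{Linft} will be needed.

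\textbf{A smooth surrogate for the distance.} I would first replace $d$, which is only Lipschitz across its cut locus, by a smooth substitute. Since $S$ is $C^3$, $d$ is $C^3$ with $|\nabla d|=1$ and $|\Delta d|$ bounded on a collar $\{d<\mu_0\}$, and the singular set of $d$ lies at distance $\ge\mu_0$ from $S$; hence one can fix $\tilde d\in C^2(\overline{Y_F})$ with $\tilde d=d$ on $\{d\le\mu_0/2\}$, $0\le\tilde d\le d$, $|\nabla\tilde d|\le1$, $\|D^2\tilde d\|_{L^\infty}\le C$ and $\tilde d\ge c_1 d$ globally (take $\tilde d=g(d)$ for a concave smooth cut-off $g$ that is constant for arguments $\ge\mu_0/2$). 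The weight $\exp\{-\lambda\sqrt\beta\,\tilde d\}$ plays the role that $\exp\{-\lambda\sqrt\beta\, d\}$ "should" play: the latter is not a legitimate supersolution across the cut locus (the semiconcavity of $d$ gives a singular part of the wrong sign), while $\tilde d$ still coincides with $d$ exactly where the decay must be sharp.

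\textbf{Estimate \eqref{psi0_bou}.} By \eqref{Varbl3}, $v=\Psi_\beta-\psi^{bl}$ solves $-\Delta v+\beta\,c(y)\,v=R_\beta$ in $Y_F$ with Neumann datum $\nabla v\cdot\mathbf{n}=\rho_\beta$ on $S$ and periodic boundary conditions, where $c(y)=\int_0^1\Phi'(\psi^{bl}+t v)\,dt\ge C_0>0$ by \eqref{Ineqderiv}; from the explicit formula \eqref{bl} for $\psi^{bl}$ one has $\nabla\psi^{bl}\cdot\mathbf{n}=-\sigma+O(\beta^{-1/2})$ on $S$, so $\|\rho_\beta\|_{L^\infty(S)}\le C\beta^{-1/2}$, and — refining \eqref{Rest3} — the pointwise bound $|R_\beta(y)|\le C\exp\{-c_2\sqrt\beta\, d(y)\}$, which is evident from the construction of $R_\beta$ (tangential derivatives of $\psi^{bl}_i$, metric corrections and the quadratic part of $\Phi$, all carrying the factor $\exp\{-\sqrt{\beta\Phi'(0)}\,q_d\}$). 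Then, for $0<\lambda<\min(c_2,\sqrt{C_0})$ and $A$ large, the functions $W_\pm=\pm A\beta^{-1}\exp\{-\lambda\sqrt\beta\,\tilde d\}$ are, for $\beta\ge\beta_0$, a super- and a subsolution of $-\Delta W+\beta c W=R_\beta$ (a short computation using $|\nabla\tilde d|\le1$ gives $-\Delta W_++\beta cW_+\ge A e^{-\lambda\sqrt\beta\tilde d}(C_0-\lambda^2-\lambda\beta^{-1/2}\|D^2\tilde d\|_{L^\infty})\ge|R_\beta|$ once $A$ and $\beta_0$ are chosen large) whose normal derivatives on $S$ dominate, resp.\ are dominated by, $\rho_\beta$. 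Since $c>0$, the maximum principle (Hopf lemma at $S$) yields $-W_+\le v\le W_+$ in $Y_F$, i.e.\ $|v(y)|\le A\beta^{-1}e^{-\lambda\sqrt\beta\tilde d(y)}\le A\beta^{-1}e^{-\lambda c_1\sqrt\beta d(y)}$, which is \eqref{psi0_bou}.

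\textbf{The other two estimates.} Now \eqref{psi_bou} follows from $|\Psi_\beta|\le|\psi^{bl}|+|v|$ together with the explicit decay $|\psi^{bl}(y)|\le C\beta^{-1/2}e^{-\sqrt{\beta\Phi'(0)}\,d(y)}$ read off from \eqref{bl}. For \eqref{psi_bou_d} I would use $|\nabla\Psi_\beta|\le|\nabla\psi^{bl}|+|\nabla v|$, the bound $|\nabla\psi^{bl}(y)|\le C e^{-\sqrt{\beta\Phi'(0)}\,d(y)}$ (the normal derivative of $\psi^{bl}$ is $O(1)$, the tangential ones $O(\beta^{-1/2})$), and a scale-invariant elliptic estimate: blowing up by the natural factor $\beta^{1/2}$ around each point turns the equation for $v$ into a $\beta$-independent one on a unit ball (with an $O(\beta^{-1})$ right-hand side and $O(\beta^{-1})$ Neumann datum after rescaling), so interior, resp.\ boundary, Schauder/$W^{2,p}$ estimates fed with $|v|\le C\beta^{-1}e^{-c\sqrt\beta d}$ give $|\nabla v(y)|\le C\beta^{-1/2}e^{-c'\sqrt\beta d(y)}$, whence \eqref{psi_bou_d}.

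\textbf{Main obstacle.} Two points need care. The first is the pointwise exponential decay of the remainder $R_\beta$: the earlier propositions only record its $L^\infty$- and $L^1$-norms, but the barrier $W_+$ is a supersolution in the interior \emph{only} because $R_\beta$ is itself exponentially small there, so one has to revisit the construction of $R_\beta$ and check that it inherits the decay of $\psi^{bl}$ and its derivatives. The second is the cut locus of $d$, dealt with above by the smooth surrogate $\tilde d$, without which the natural exponential weight is not a legitimate (super)solution. Everything else is a routine application of the comparison principle and of scaled elliptic regularity.
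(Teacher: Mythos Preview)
Your argument is correct, but it is organized differently from the paper's and uses a slightly different linearization.

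The paper proves \eqref{psi_bou} \emph{first}, by writing $\Phi(\Psi_\beta)=p(\Psi_\beta)\Psi_\beta$ with $p(s)=\Phi(s)/s\ge H^2$ (so the nonlinear equation becomes a \emph{linear} one with coefficient $\ge H^2$), then comparing $\Psi_\beta$ with the solution of the constant-coefficient problem $-\Delta u+\beta H^2 u=0$ and an explicit barrier $U=\frac{2\Sigma}{H\sqrt\beta}\exp\{-\tfrac{H}{2}\sqrt\beta\,\mathcal G(q_d)\}$, where $\mathcal G(s)=s-s^2/R_0$ near $S$ and is constant past $R_0/2$ (this is exactly your ``smooth surrogate for the distance'', written in one chart). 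Estimate \eqref{psi_bou_d} is then obtained, as you do, by rescaling $z=\sqrt\beta\,y$ and invoking local elliptic estimates. Finally \eqref{psi0_bou} is proved by a second barrier argument for $V_\beta=\Psi_\beta-\psi^{bl}$, but the linearization is taken around $0$ (coefficient $\Phi'(0)$), and the pointwise bound \eqref{psi_bou} just proved is used to show that the remainder, i.e.\ the part of $\beta\big(\Phi(\Psi_\beta)-\Phi(\psi^{bl})-\Phi'(0)V_\beta\big)$ pushed to the right-hand side, decays like $e^{-c\sqrt\beta\,q_d}$.

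Your route inverts this: you go straight to \eqref{psi0_bou} by linearizing with the mean-value coefficient $c(y)=\int_0^1\Phi'(\psi^{bl}+tv)\,dt\ge C_0$, which avoids any prior knowledge of $\Psi_\beta$ and needs only one barrier; \eqref{psi_bou} and \eqref{psi_bou_d} then follow by the triangle inequality and the explicit decay of $\psi^{bl}$. This is more economical. The price you pay---and you identify it correctly---is that you must revisit the construction of $R_\beta$ and extract the pointwise bound $|R_\beta(y)|\le C e^{-c\sqrt\beta\,d(y)}$, which is indeed implicit in the computations behind \eqref{Rest1}--\eqref{Rest3} (each contribution carries the factor $e^{-\sqrt{\beta\Phi'(0)}\,q_d}$). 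The paper's order has the complementary advantage that \eqref{psi_bou} is obtained without opening up $R_\beta$ at all. One small remark: by the very form of \eqref{Varbl3} (no surface term for arbitrary $\varphi\in H^1_\#(Y_F)$), the Neumann datum for $v$ is in fact $\rho_\beta=0$, so your $O(\beta^{-1/2})$ is an overestimate---harmless, but it simplifies the Hopf step.
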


\begin{proof}
Introduce a function of $s\in\mathbb R$
$$
p(s) = \left\{\begin{array}{ll}
\Phi(s)/s \qquad  \hbox{for }s\not=0,\\[2mm]
\dsp \sum_{j=1}^N z_j^2n_j^0(\infty) \qquad  \hbox{for }s=0.
\end{array}
\right.
$$
From (\ref{Ineqsign}) in Lemma \ref{Ineq1}, we deduce $p(s)\geq H^2>0$ for all
$s\in\mathbb R$. It also readily follows from the definition of $p$ that $p$
is a continuous function of $s$.

Recall that $Y_F$ and $S$ are subsets of the unit torus $\mathbb T^d$.
Therefore, 1-periodic boundary conditions are implicit for all boundary
value problems below. For the sake of brevity we do not indicate them.
Introducing a function $B_\beta(y)=p(\Psi_\beta(y))$ (which is continuous
and satisfies $B_\beta(y)\geq H^2$), the Poisson-Boltzmann equation (\ref{BP1})
can be rewritten as
$$
\left\{
\begin{array}{ll}
-\Delta\Psi_\beta+\beta B_\beta(y)\Psi_\beta=0,\qquad&\hbox{in }Y_F,\\[2mm]
\nabla\Psi_\beta\cdot{\bf n}=-\sigma, \qquad&\hbox{on }S.
\end{array}
\right.
$$
Denote $\Sigma=\|\sigma\|_{L^\infty(S)}$. Then, by the maximum principle,
$$
|\Psi_\beta|\le \Psi_\beta^+ \qquad\hbox{in }Y_F,
$$
where $\Psi_\beta^+$ is the unique solution of
$$
\left\{
\begin{array}{ll}
-\Delta\Psi_\beta^+ +\beta H^2\Psi_\beta^+=0,\qquad&\hbox{in }Y_F,\\[2mm]
\nabla\Psi_\beta\cdot{\bf n}=\Sigma, \qquad&\hbox{on }S.
\end{array}
\right.
$$
Thus, it suffices to show that
\begin{equation}
\label{main_ineq}
|\Psi^+_\beta(y)|\le \frac{C_1}{\sqrt{\beta}}\exp\big\{-C_2\sqrt{\beta}\,
\mathrm{dist}(y,S)\big\},\qquad \hbox{\rm for all }y\in Y_F.
\end{equation}

In order to prove (\ref{main_ineq}), we are going to construct a so-called
barrier function. For any $y\in S$ denote by $R(y)$ the radius of curvature of $S$
at $y$. Under our standing assumption on the smoothness of $S$ we have
$R(y)\geq R_0>0$, $\forall\,y\in S$. For $R_0$ small enough, each of
the neighborhoods $\mathcal{N}(y_i^0)$, $1\leq i\leq M$, covering $S$
contains a ball of center $y_i^0$ and radius $(R_0/2)$. In this ball,
we rewrite the Laplace operator in terms of the new coordinates $q=q(y)$,
introduced in Subsection \ref{Geo},
$$
-\Delta_y=  -\frac{\partial^2}{\partial q_d^2}+\sum\limits_{i,j=1}^{d-1}
Q_{ij}(q)\frac{\partial^2}{\partial q_i\partial q_j}+\sum\limits_{j=1}^{d}
Q^0_{j}(q)\frac{\partial}{\partial q_j}  ,
$$
with regular bounded functions $Q_{ij}(q)$ and $Q_j^0(q)$ defined in
terms of $K_{ij}$ and $J$.
Setting
$$
\mathcal{G}(s)=\left\{
\begin{array}{ll}
s-\frac{s^2}{R_0}\qquad&\hbox{if }s\le \frac{R_0}{2},\\[2mm]
\frac{R_0}{4},&\hbox{otherwise,}
\end{array}\right.
$$
and
$$
U(y )= \frac{2\Sigma}{H\sqrt{\beta}} \exp \left\{ -\sqrt{\beta}\frac{H}{2}
\mathcal{G}(q_d)\right\} ,
$$
after straightforward computations we obtain
\begin{equation}
\label{b_c_aux}
\nabla U \cdot {\bf n}\Big|_S = - \frac{\partial U}{\partial q_d}\Big|_{q_d=0}=\Sigma.
\end{equation}
Notice also that  $U\in H^2(Y_F)$ because $\nabla U(y)=0$ if $\mathrm{dist}(y,S)=R_0/2$.

Substituting $U$ in the equation yields, for all $y\in Y_F$ such that $q_d\le R_0/2$,
$$
-\Delta U+\beta H^2 U= \beta\Big\{\!-\frac{H^2}{4}\Big(1-\frac{2q_d}{R_0}\Big)^2
-\frac{H}{R_0\sqrt{\beta}}-\frac{Q_d^0(q)H}{2\sqrt{\beta}}
\Big(1-\frac{2q_d}{R_0}\Big)+H^2\Big\}U .
$$
Clearly, for all sufficiently large $\beta$ the above right hand side is positive.
Therefore, for $\beta>\beta_0$,
$$
-\Delta U+\beta H^2 U\ge 0 \qquad\hbox{in }Y_F.
$$
Combining this relation with (\ref{b_c_aux}), and using the maximum principle,
we conclude that $\Psi^+_\beta\le U$ in $Y_F$. Since $\mathrm{dist}(y,S)$ in $Y_F$
is bounded by some constant $C_3$ and $\mathcal{G}(s)\ge C_4 s$ for any $s\in(0,C_3)$,
this implies the first desired estimate (\ref{psi_bou}).

Estimate (\ref{psi_bou_d}) follows from (\ref{psi_bou}) thanks
to the standard elliptic estimates. Indeed, in the rescaled coordinates
$z=\sqrt{\beta}y$ equation (\ref{BP1}) reads
$$
\begin{array}{c}
\displaystyle
-\Delta_z\Psi_\beta=-\Phi(\Psi_\beta) \qquad \hbox{in }\sqrt{\beta}Y_F\\[2mm]
\displaystyle
\nabla_z\Psi_\beta\cdot n_z=-\frac{1}{\sqrt{\beta}}\sigma\qquad \hbox{on } \sqrt{\beta}S.
\end{array}
$$
By (\ref{psi_bou}) for any ball $B_{z_0,1}=\{z\,:\,|z-z_0|\le 1\}$ with $y_0\in \sqrt{\beta}Y_F$ we have
$$
|\Psi_\beta(z)|\le \frac{C}{\sqrt{\beta}}e^{-c_2 \mathrm{dist}(z_0,\sqrt{\beta}S)},\quad |\Phi(\Psi_\beta(z))|\le \frac{C}{\sqrt{\beta}}e^{-c_2 \mathrm{dist}(z_0,\sqrt{\beta}S)},
$$
$z\in B_{z_0,1}\cap\sqrt{\beta}Y_F$.
Considering our regularity assumptions on $S$ and $\sigma$, by the local elliptic estimates for Poisson equation including those near the boundary, we obtain
$$
|\nabla_z \Psi_\beta(z)|\le  \frac{C}{\sqrt{\beta}}e^{-c_2 \mathrm{dist}(z_0,\sqrt{\beta}S)},\qquad z\in B_{z_0,1/2}\cap\sqrt{\beta}Y_F.
$$
In the coordinates $y$ this yields the desired estimate (\ref{psi_bou_d}).

Estimate (\ref{psi0_bou}) can be obtained by means of similar arguments
as in the proof of (\ref{psi_bou}). Here we just outline  the proof and leave the details to the reader.
From the definition of $\Psi_\beta$ and $\psi^{bl}$ and estimate (\ref{psi_bou}) it readily follows that the difference $V_\beta=\Psi_\beta-\psi^{bl}$ satisfies in $Y_F^\mu$, $\mu=R_0/2$, the following problem:
$$
\begin{array}{c}
-\Delta V_\beta(y)+ \beta\Phi'(0)V_\beta(y)=g_1(y)\qquad \hbox{in }Y_F^\mu,\\[2mm]
\nabla V_\beta\cdot {\bf n}\big|_S=0,\qquad V_\beta\big|_{|q_d(y)|=\frac{R_0}{2}}=g_2(y)
\end{array}
$$
with
$$
|g_1(y)|\le c_1 e^{-c_2\sqrt{\beta}q_d(y)},\qquad
|g_2(y)|\le \frac{1}{\sqrt{\beta}}c_1e^{-c_2\sqrt{\beta}R_0/2}\le \frac{1}{\beta}c_1e^{-c_3\sqrt{\beta}R_0/2};
$$
here the constants $c_2>0$ and $c_3>0$ do not depend on $\beta$. Setting $\overline V_\beta=\frac{C}{\beta}e^{-\sqrt{\beta}H_1q_d(y)}$
and choosing large enough $C>0$ and small enough $H_1>0$,
we obtain that for all sufficiently large $\beta$ it holds
$$
-\Delta\overline V_\beta+\beta\Phi'(0)\overline V_\beta> g_1,
\quad\nabla\overline V_\beta\cdot{\bf n}\big|_S<0,\quad
\overline V_\beta\big|_{|q_d(y)|=\frac{R_0}{2}}> g_2.
$$
Therefore, $V_\beta\leq \overline V_\beta$ in $Y_F^\mu$. Similarly, $V_\beta\geq -\overline V_\beta$ in $Y_F^\mu$. This yields (\ref{psi0_bou}) in $Y_F^\mu$.  In $Y_F\setminus Y_F^\mu$ (\ref{psi0_bou}) follows by the maximum principle.
\end{proof}

\section{Dirichlet boundary condition or $\zeta$ potential at the boundary}
\label{dirichlet}

The previous asymptotic analysis was specific to the Neumann boundary
condition (or given charge density $\sigma$) imposed on the pore walls $S$.
The situations is quite different for Dirichlet boundary condition (or
$\zeta$ potential) on $S$. We briefly investigate this case. We modify
the boundary condition in the Poisson-Boltzmann equation
\begin{equation}
\label{BP1Z}
\left\{ \begin{array}{l}
\dsp - \Delta \Psi_\beta + \beta  \Phi ( \Psi_\beta  ) =0  \ \mbox{ in } \ Y_F , \\
\dsp   \Psi_\beta  = \zeta \ \mbox{ on } \, S , \\
 \Psi_\beta \; \mbox{ is } 1-\mbox{periodic}.
\end{array} \right.
\end{equation}
All unknowns and parameters are exactly the same as the ones in Section \ref{PBE}.

\subsection{The limit case of small $\beta$}

We start by studying the behavior of $\Psi_\beta$ when $\beta$ goes to zero.
Performing a formal asymptotic expansion
$$
\Psi_\beta = \Psi_0 + \beta \Psi_1 + \dots ,
$$
it is easy to check that the zero-order term is constant
$$
\Psi_0(y)  \equiv \zeta ,
$$
while the first-order term is the solution of the linear problem
\begin{equation}
\label{BP2Z}
\left\{ \begin{array}{l}
\dsp - \Delta \Psi_1 = - \Phi(\zeta)  \ \mbox{ in } \ Y_F , \\
\dsp   \Psi_1  = 0 \ \mbox{ on } \, S , \\
 \Psi_1 \; \mbox{ is } 1-\mbox{periodic}.
\end{array} \right.
\end{equation}
It is not difficult to justify this ansatz and to prove the following
error estimate.

\begin{lemma}
\label{Dir.small}
There exists a positive constant $C$ such that
$$
\| \Psi_\beta - \zeta - \beta \Psi_1 \|_{H^1(Y_F)} \leq C \beta^2 .
$$
\end{lemma}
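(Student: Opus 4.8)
The plan is to treat this as a standard (non-singular) perturbation energy estimate: since the Dirichlet datum $\Psi_\beta=\zeta$ on $S$ (with $\zeta$ the prescribed constant surface potential) does not involve $\beta$, no boundary layer forms and $\Psi_\beta$ converges to the constant function $\zeta$ at rate $O(\beta)$, so the argument is much shorter than in the Neumann case. Throughout I set $w_\beta:=\Psi_\beta-\zeta-\beta\Psi_1$, which is $1$-periodic and vanishes on $S$, and $v_\beta:=\Psi_\beta-\zeta$. The strategy is: (i) a $\beta$-independent $L^\infty$ enclosure of $\Psi_\beta$; (ii) an $O(\beta)$ a priori $H^1$-estimate for $v_\beta$; (iii) an $O(\beta^2)$ energy estimate for $w_\beta$.

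\textbf{Step 1: uniform $L^\infty$ bound.} Recall that $\Phi$ is nondecreasing with $\Phi(0)=0$ by the bulk electroneutrality condition (\ref{Neutrality}). Hence, for the semilinear operator $u\mapsto-\Delta u+\beta\Phi(u)$, the two constants $0$ and $\zeta$ are a subsolution and a supersolution of (\ref{BP1Z}) when $\zeta\ge0$ (and the roles are exchanged when $\zeta\le0$), and both agree with $\Psi_\beta$ on $S$. By elementary comparison arguments, exactly of the type used in the proof of Proposition \ref{Linft} (testing the difference of equations with the negative, resp. positive, part), this yields $\min(0,\zeta)\le\Psi_\beta\le\max(0,\zeta)$ in $Y_F$ for every $\beta>0$. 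In particular $\|\Psi_\beta\|_{L^\infty(Y_F)}\le|\zeta|$, so $\Phi(\Psi_\beta)$ is bounded and $\Phi$ is Lipschitz on the (fixed) range of $\Psi_\beta$, with a constant $L$ independent of $\beta$.

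\textbf{Step 2: the $O(\beta)$ a priori estimate.} The function $v_\beta$ is periodic, vanishes on $S$, and solves $-\Delta v_\beta=-\beta\Phi(\Psi_\beta)$. Testing with $v_\beta$ (admissible since $\Phi(\Psi_\beta)$ is bounded) and writing $\Phi(\Psi_\beta)v_\beta=(\Phi(\Psi_\beta)-\Phi(\zeta))(\Psi_\beta-\zeta)+\Phi(\zeta)v_\beta$, the first term is nonnegative by monotonicity of $\Phi$, so
\[
\int_{Y_F}|\nabla v_\beta|^2\,dy\le-\beta\Phi(\zeta)\int_{Y_F}v_\beta\,dy\le\beta\,|\Phi(\zeta)|\,|Y_F|^{1/2}\,\|v_\beta\|_{L^2(Y_F)}.
\]
Since $v_\beta=0$ on $S$, the Poincar\'e inequality $\|v_\beta\|_{L^2(Y_F)}\le C\|\nabla v_\beta\|_{L^2(Y_F)}$ applies, and we deduce $\|\Psi_\beta-\zeta\|_{H^1(Y_F)}\le C\beta$ with $C$ independent of $\beta$ (compare Lemma \ref{unif}).

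\textbf{Step 3: conclusion.} The corrector $\Psi_1$ is the unique (smooth, periodic) solution of (\ref{BP2Z}), so using $-\Delta\Psi_1=-\Phi(\zeta)$ and $-\Delta\Psi_\beta=-\beta\Phi(\Psi_\beta)$ we get that $w_\beta$ solves
\[
-\Delta w_\beta=-\beta\bigl(\Phi(\Psi_\beta)-\Phi(\zeta)\bigr)\ \text{ in }Y_F,\qquad w_\beta=0\ \text{ on }S,\qquad w_\beta\ \text{1-periodic}.
\]
Testing with $w_\beta$, then using the Lipschitz bound of Step 1 and the estimate of Step 2,
\[
\int_{Y_F}|\nabla w_\beta|^2\,dy\le\beta\,\|\Phi(\Psi_\beta)-\Phi(\zeta)\|_{L^2(Y_F)}\,\|w_\beta\|_{L^2(Y_F)}\le\beta L\,\|\Psi_\beta-\zeta\|_{L^2(Y_F)}\,\|w_\beta\|_{L^2(Y_F)}\le C\beta^2\,\|\nabla w_\beta\|_{L^2(Y_F)},
\]
the last inequality using the Poincar\'e inequality for $w_\beta$ (valid since $w_\beta=0$ on $S$). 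Hence $\|\nabla w_\beta\|_{L^2(Y_F)}\le C\beta^2$, and Poincar\'e once more gives $\|w_\beta\|_{H^1(Y_F)}\le C\beta^2$, which is the claimed estimate.

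\textbf{Main difficulty.} There is essentially no obstacle here: once the $\beta$-independent pointwise enclosure $0\le\Psi_\beta\le\zeta$ (or its mirror image) is in hand, all nonlinear terms are integrable and every test function is admissible, just as in Proposition \ref{Linft} and Theorem \ref{th2}, and the rest is a two-line energy estimate. The only point requiring a little care is Step 1 — extracting that enclosure from the comparison principle and the monotonicity of $\Phi$ — and, conceptually, noticing that in the Dirichlet case the singular perturbation of the Neumann analysis has simply disappeared, which is exactly why this lemma is so much easier than Theorem \ref{th1}.
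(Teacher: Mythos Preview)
The paper does not actually give a proof of Lemma \ref{Dir.small}; it only states the result after writing ``It is not difficult to justify this ansatz and to prove the following error estimate.'' Your proof is correct and is precisely the natural argument the paper is alluding to: a comparison argument for a $\beta$-independent $L^\infty$ enclosure, followed by two successive energy estimates, mirroring the structure used earlier (Proposition \ref{Linft} and Theorems \ref{th1}--\ref{th2}) in the Neumann setting.

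One minor remark: you explicitly assume $\zeta$ is a constant. This is consistent with the paper's formal expansion, which asserts $\Psi_0(y)\equiv\zeta$; that identity only holds if $\zeta$ is constant (otherwise $\Psi_0$ would be the periodic harmonic extension of $\zeta$ and the corrector problem (\ref{BP2Z}) would have $\Phi(\Psi_0(y))$ on the right-hand side). Since later, in Subsection \ref{Form1}, the paper treats $\zeta=\zeta(q')$ as variable, it is worth flagging that your proof covers exactly the case the lemma is stated for. For variable $\zeta$, Step 1 would need the obvious modification (barriers $\min_S\zeta$, $\max_S\zeta$ together with $0$) and Step 3 would go through unchanged once $\zeta$ is replaced everywhere by the harmonic extension $\Psi_0$.
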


\subsection{The limit case of large $\beta$: formal asymptotics}
\label{Form1}

We are now interested in the behavior of $\Psi_\beta$ for large $\beta$.
As in subsection \ref{Form} we are going to use matching asymptotic expansion.
Of course, the outer expansion, being independent of the boundary condition,
is the same and we get again (\ref{Outerexp}), namely
\begin{equation}
\label{Outerexp1}
\Psi_\beta (y) = O (\frac{1}{\beta} ) \quad \mbox{away from the boundary }  S.
\end{equation}
To the contrary, the behavior close to $S$ differs significantly from Section \ref{Lb}.
We study it locally, in the same geometrical setting as before.
We obtain the same differential operator close to the boundary with
a different boundary condition
\begin{gather}
-\frac{\p^2 \Psi_\beta}{\p q_d^2} + \beta \Phi(\Psi_\beta)
+ \mbox{ lower order derivatives in } q_d \notag \\
+ \mbox{ second order differential operator in } q' \, = \, 0
\mbox{ in } \ Y_F \cap \mathcal{N}(y_0) , \label{BP1flat1Z} \\
\Psi_\beta = \zeta (q_1 , \dots , q_{d-1} )  \ \mbox{ on } \, S\cap \mathcal{N}(y_0) .\label{BP1flat2Z}
\end{gather}
The boundary condition (\ref{BP1flat2Z}) (given $\zeta$ potential)
is a much stronger constraint than the previous Neumann condition (\ref{BP1flat2}).
Consequently, we change the inner asymptotic expansion which, instead of (\ref{expan}),
is now of the form
\begin{gather}
\Psi_\beta(q',q_d)=
\Psi_{0, \zeta} (q',\beta^{1/2} q_d)+\beta^{-1/2}\Psi_{1,\zeta} (q',\beta^{1/2} q_d)+\dots .
\label{expan1}
\end{gather}
Introducing $\xi=\beta^{1/2} q_d$, substituting (\ref{expan1}) in
(\ref{BP1flat1Z})-(\ref{BP1flat2Z}) and collecting power-like terms
in the resulting equations, we arrive at the following problem for
the leading term of the expansion
\begin{gather}
-\frac{d^2}{d\xi^2}\Psi_{0, \zeta } (q',\xi) + \Phi (\Psi_{0, \zeta} (q',\xi))=0 \; \mbox{for} \;  \xi>0 , \label{Inner3} \\
\Psi_{0, \zeta} (q',0)=\zeta(q').\label{Inner4}
\end{gather}
After matching with the outer solution $\Psi_\beta=O (\beta^{-1})$, we impose
additionally that $\Psi_{0,\zeta} (q',+\infty)=0$ and the square integrability
of the derivative.

Let $\Cc(x) = \dsp \sum^N_{j=1} n^0_j(\infty) e^{-z_j x}$ be the primitive of $\Phi (x)$.
Then the problem (\ref{Inner3})-(\ref{Inner4}) admits the first integral
\begin{equation}
\label{Firstint}
- \frac{1}{2} (\frac{d}{d\xi} \Psi_{0,\zeta} )^2 + \Cc (\Psi_{0, \zeta } )= C_1 = \mbox{ constant}.
\end{equation}
As we impose $\dsp \Psi_{0, \zeta } (q',+\infty)=0$ and the square integrability
of the derivative, it follows that
\begin{equation}
\label{Cte}
C_1 = \Cc (0) = \sum^N_{j=1} n^0_j(\infty) > 0.
\end{equation}
Thus, (\ref{Inner4}), (\ref{Firstint}) and (\ref{Cte}) give
\begin{equation}
\label{Cauchy}
\left\{ \begin{array}{l}
\dsp \Psi_{0, \zeta } |_{\xi =0} =\zeta ,  \\
\dsp \frac{d}{d\xi} \Psi_{0,\zeta} = -2\,\mbox{sign} (\zeta)
\sqrt{ \Cc (\Psi_{0,\zeta} ) - \Cc (0)}.
\end{array} \right.
\end{equation}

\begin{proposition}
\label{Bdrylay}
The Cauchy problem (\ref{Cauchy}) has a unique smooth solution $\Psi_{0,\zeta}$
on $(0, +\infty )$, satisfying (\ref{Inner3}) and
\begin{equation}
\label{Decay}
\begin{array}{l}
\dsp | \Psi_{0,\zeta} (q',\xi) | \leq |\zeta (q') | e^{-\sqrt{C_s} \xi}, \\
\dsp | \frac{d}{d\xi} \Psi_{0 \zeta} (q',\xi)| \leq \sqrt{C_0} |\zeta (q') |^{1/2}
e^{-\sqrt{C_s }\xi /2 },
\end{array}
\end{equation}
where $C_s = \min \{ \sum_{j\in j^-} z^2_j n_j^0 (\infty) , \sum_{j\in j^+} z^2_j n_j^0 (\infty) \} $ and $\dsp C_0 = 2 \max_S | \Phi (\zeta (q')  )| $.
\end{proposition}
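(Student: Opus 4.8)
The plan is to treat the tangential variable $q'$ as a fixed parameter, so that $\zeta=\zeta(q')$ is a number and (\ref{Cauchy}) is the scalar autonomous Cauchy problem $\frac{d}{d\xi}\Psi_{0,\zeta}=F(\Psi_{0,\zeta})$, $\Psi_{0,\zeta}(0)=\zeta$, with $F(s)=-2\,\mathrm{sign}(\zeta)\sqrt{\Cc(s)-\Cc(0)}$; by the reflection $s\mapsto-s$ it suffices to consider $\zeta>0$ (the case $\zeta=0$ gives $\Psi_{0,\zeta}\equiv0$). Everything rests on two elementary properties of $\Cc$. First, $\Cc'=\Phi$, $\Phi(0)=0$ by (\ref{Neutrality}), and $\Phi'(s)=\sum_j z_j^2 n_j^0(\infty)e^{-z_j s}$; keeping only the terms with $z_j<0$ gives $\Phi'(s)\ge\sum_{j\in j^-}z_j^2 n_j^0(\infty)\ge C_s$ for $s\ge0$, and symmetrically $\Phi'(s)\ge C_s$ for $s\le0$, so $\Phi(t)\,\mathrm{sign}(t)\ge C_s|t|$ (a sharpening of (\ref{Ineqsign})) and hence $\Cc(s)-\Cc(0)=\int_0^s\Phi(t)\,dt\ge\frac{C_s}{2}s^2$ for all $s$. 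Second, writing $\Cc(s)-\Cc(0)=s^2 g(s)$ with $g$ real-analytic and strictly positive near $0$ (with $g(0)=\frac12\Phi'(0)$), the function $s\mapsto\sqrt{\Cc(s)-\Cc(0)}=|s|\sqrt{g(s)}$ is locally Lipschitz on $\RR$ -- smooth away from $0$, and near $0$ the product of $|s|$ with a $C^1$ factor -- and therefore so is $F$.

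Picard--Lindel\"of then yields a unique maximal solution. Since $\Psi\equiv0$ is also a solution ($F(0)=0$), uniqueness prevents $\Psi_{0,\zeta}$ from ever meeting $0$, so $\frac{d}{d\xi}\Psi_{0,\zeta}=-2\sqrt{\Cc(\Psi_{0,\zeta})-\Cc(0)}<0$ as long as $\Psi_{0,\zeta}>0$; hence $\Psi_{0,\zeta}$ is strictly decreasing and stays in $(0,\zeta)$ for $\xi>0$. Being bounded, it is global on $[0,\infty)$, and since the range $(0,\zeta]$ avoids the only non-smooth point of $F$, bootstrapping $\Psi'=F\circ\Psi$ gives $\Psi_{0,\zeta}\in C^\infty([0,\infty))$. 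Monotone and bounded below, $\Psi_{0,\zeta}(\xi)\to\ell\in[0,\zeta)$; were $\ell>0$, $\frac{d}{d\xi}\Psi_{0,\zeta}$ would approach the negative constant $-2\sqrt{\Cc(\ell)-\Cc(0)}$ and $\Psi_{0,\zeta}$ would diverge, so $\ell=0$ and $\Psi_{0,\zeta}(q',+\infty)=0$. Finally, $\Psi_{0,\zeta}$ satisfies the first integral (\ref{Firstint})--(\ref{Cte}) underlying (\ref{Cauchy}); differentiating it and using $\Cc'=\Phi$ gives $\frac{d}{d\xi}\Psi_{0,\zeta}\bigl(-\frac{d^2}{d\xi^2}\Psi_{0,\zeta}+\Phi(\Psi_{0,\zeta})\bigr)=0$, and since $\frac{d}{d\xi}\Psi_{0,\zeta}$ is nowhere zero on $[0,\infty)$ the second-order equation (\ref{Inner3}) follows.

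For the decay estimates (\ref{Decay}), inserting $\Cc(\Psi_{0,\zeta})-\Cc(0)\ge\frac{C_s}{2}\Psi_{0,\zeta}^2$ into (\ref{Cauchy}) gives $\bigl|\frac{d}{d\xi}\Psi_{0,\zeta}\bigr|\ge\sqrt{C_s}\,|\Psi_{0,\zeta}|$, hence $\frac{d}{d\xi}\bigl(|\Psi_{0,\zeta}|\,e^{\sqrt{C_s}\,\xi}\bigr)\le0$ and $|\Psi_{0,\zeta}(q',\xi)|\le|\zeta(q')|\,e^{-\sqrt{C_s}\,\xi}$, the first line of (\ref{Decay}). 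For the derivative, monotonicity of $\Phi$ together with $|\Psi_{0,\zeta}|\le|\zeta|$ give $|\Cc(\Psi_{0,\zeta})-\Cc(0)|=\bigl|\int_0^{\Psi_{0,\zeta}}\Phi(t)\,dt\bigr|\le|\Phi(\zeta)|\,|\Psi_{0,\zeta}|\le|\Phi(\zeta)|\,|\zeta|\,e^{-\sqrt{C_s}\,\xi}$, and substituting into (\ref{Cauchy}) yields $\bigl|\frac{d}{d\xi}\Psi_{0,\zeta}(q',\xi)\bigr|\le\sqrt{C_0}\,|\zeta(q')|^{1/2}e^{-\sqrt{C_s}\,\xi/2}$ with $C_0=2\max_S|\Phi(\zeta(q'))|$; in particular $\frac{d}{d\xi}\Psi_{0,\zeta}\in L^2(0,\infty)$, which is the matching requirement imposed after (\ref{Inner4}).

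The genuine difficulty is the conjunction of global existence and the statement that $\Psi_{0,\zeta}$ never reaches $0$ at a finite $\xi$: both hinge on the exact quadratic vanishing of $\Cc(\cdot)-\Cc(0)$ at the origin, which is what makes the square-root nonlinearity Lipschitz there and lets the constant solution $0$ act as an impassable barrier; without it one could not exclude a solution that degenerates in finite ``time''. Once this is in hand, the pointwise bounds on $\Phi$, the Gronwall-type comparison, and the bootstrapping for smoothness are all routine.
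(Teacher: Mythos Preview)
Your proof is correct and follows essentially the same approach as the paper's: reduce to a scalar autonomous ODE with $q'$ frozen, argue that the solution stays strictly between $0$ and $\zeta$ and is therefore global, and then read off the two decay bounds from the lower and upper estimates $\frac{C_s}{2}\Psi^2\le\Cc(\Psi)-\Cc(0)\le|\Phi(\zeta)|\,|\Psi|$ substituted into the first integral.

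The one place where you are more careful than the paper is worth noting. The paper simply asserts that ``$0$ is a critical point of (\ref{Cauchy}) and no trajectory can leave or reach that point''. For the first--order equation $\Psi'=-2\,\mathrm{sign}(\zeta)\sqrt{\Cc(\Psi)-\Cc(0)}$ this is not automatic: a square-root right-hand side is generically only H\"older--$\tfrac12$ at its zero, and then nonuniqueness (finite-time absorption into the equilibrium) is the typical behaviour. Your observation that $\Cc(s)-\Cc(0)$ vanishes \emph{quadratically} at $0$ --- precisely because the electroneutrality condition (\ref{Neutrality}) kills the linear term --- is exactly what upgrades $F$ to locally Lipschitz and makes the barrier argument with the constant solution $0$ legitimate. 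The paper's terse phrase is really shorthand for this, or alternatively for the phase-plane picture of the second-order equation (\ref{Inner3}), where $(0,0)$ is a hyperbolic equilibrium and (\ref{Cauchy}) is its stable manifold; either viewpoint gives the same conclusion, and you have supplied the missing justification.
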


\begin{proof}
For $\zeta =0$, the unique solution is $\Psi_{0, \zeta } =0$ and there is nothing to prove.

For $\zeta \neq 0$, the Cauchy problem (\ref{Cauchy}) has a unique maximal smooth solution on some interval $(0,\ell )$. If $\zeta >0$, then the solution is positive, monotone decreasing and it reaches the value $0$ at $\xi =\ell$. For $\zeta <0$, it is the opposite situation. But $0$ is a critical point of (\ref{Cauchy}) and no trajectory can leave or reach that point. So the solution cannot be zero for some finite $\ell$.
Therefore, the Cauchy problem (\ref{Cauchy}) has a unique maximal smooth solution on the entire real line $(0, +\infty )$.

Next, a simple calculation gives
$$
| \frac{d}{d\xi } \Psi_{0, \zeta } (q' , \xi )|^2 \geq C_s |\Psi_{0, \zeta } (q' , \xi )|^2 ,
$$
with $C_s = \min \{ \sum_{j\in j^-} z^2_j n_j^0 (\infty) , \sum_{j\in j^+} z^2_j n_j^0 (\infty) \} $.
Consequently, for $\zeta >0$, we have
$$ \frac{d}{d\xi } \Psi_{0, \zeta } (q' , \xi ) \leq \zeta (q') e^{-\sqrt{C_s} \xi} $$
and we establish the exponential decay of $ \Psi_{0, \zeta }$ and the first part of (\ref{Decay}). For $\zeta <0$ everything is analogous.

The ordinary differential equation (\ref{Cauchy}) gives
$$
| \frac{d}{d\xi } \Psi_{0, \zeta } (q' , \xi )|^2 \leq 2 \max_S | \Phi (\zeta ) |  |\Psi_{0, \zeta } (q' , \xi )|
$$
and we conclude the remaining part of  estimate (\ref{Decay}).
\end{proof}

\begin{remark}
In many situations, for a symmetric electrolyte with ion charges $\pm Q$,
the explicit solutions are known. A classical reference is the book \cite{Verwey}.
For example, in the case $-z_1 =1=z_2$ and $n^0_1(\infty) = n^0_2(\infty)=1/2$,
we have the following Gouy-Chapman solution
$$
\Psi_{0, \zeta } ( q', \xi ) = 2\ln \frac{1+ \tanh (\zeta /2) e^{-\xi} }{1- \tanh (\zeta /2) e^{-\xi} } .
$$
By direct computation we can check that this solution satisfies the properties
established in Proposition \ref{Bdrylay}. In the general case $\Psi_{0, \zeta }$
can be expressed using elliptic functions. Nevertheless, our simple analysis
gave us the properties of the solution without using its explicit form.
\end{remark}

\subsection{The limit case of large $\beta$: rigorous error estimate}
\label{REE2}

We start with the study of the boundary layer function $\Psi_{0, \zeta }$.
As in section \ref{Lb} we use the local change  of variables $y\to q$.

\begin{proposition}
For each open subset $W_i = Y_F \cap {\cal N}(y_i^0)$, let the boundary layer function
be defined by
\begin{equation}
\label{BLfZ}
\psi^{bl}_i (q ) = \Psi_{0, \zeta } (q', q_d \sqrt{\beta})
\end{equation}
for the given boundary data $\zeta$ on $S\cap {\overline W}_i$.
Then, for any smooth function $\varphi$ such that $\varphi =0$ on
$\p \mathbf{q}(S \cap {\overline W}_i)$, we have
\begin{gather}
\hskip-10pt\int_{\mathbf{q}(W_i)} K^i \nabla_q \psi^{bl}_i (q ) \cdot \nabla_q \varphi \ J^i \ dq +
\beta \int_{\mathbf{q}(W_i)} \Phi ( \psi^{bl}_i (q ) ) \, \varphi J^i \ dq
= \int_{\mathbf{q}(W_i)} R^i_1 \nabla_{{ q'}} \varphi \, J^i \, dq \notag \\
+\int_{\mathbf{q}(W_i)} R^i_2  \varphi \, J^i \, dq  = { \int_{\mathbf{q}(W_i)} R^i_3  \varphi \, J^i \, dq  + \int_{\mathbf{q}(\p W_i \setminus (S\cap {\overline W}_i))} \sigma^i_1  \varphi  \, dq',}
\label{Varbl1Z}
\end{gather}
where $J^i$ is the Jacobian of the map $q \to y$, defined by (\ref{jacobian}),
$K^i$ is the metric matrix defined by (\ref{metric}), $\nabla_{q'}$ is gradient
with respect to $q'$ and
\begin{gather}
\hskip-16pt \sqrt{\beta} ( \|  R^i_1 \|_{L^\infty ( {q} (W_i))}  + \|  \sigma^i_1 \|_{L^\infty ( {q} (\p W_i \setminus (S\cap {\overline W}_i)))} )+ \|  R^i_j \|_{L^\infty ( {q} (W_i))} \leq C\sqrt{\beta} ; \label{Rest1Z}\\
\sqrt{\beta} (\|  R^i_1 \|_{L^r ( {q} (W_i))}+ \|  \sigma^i_1 \|_{L^\infty ( {q} (\p W_i \setminus (S\cap {\overline W}_i)))} ) + \notag \\
\|  R^i_j \|_{L^r ( {q} (W_i))} \leq C\beta^{(1-1/r)/2} \; j=2,3 . \label{Rest2Z}
\end{gather}
\end{proposition}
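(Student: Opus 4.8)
The plan is to establish (\ref{Varbl1Z}) by a direct computation, in the same spirit as the proof of the corresponding Proposition of Section \ref{REE} for the Neumann problem; the two new features are that here $\psi^{bl}_i=\Psi_{0,\zeta}(q',\sqrt\beta\,q_d)$ is of size $O(1)$ instead of $O(\beta^{-1/2})$, so that $\partial_{q_d}\psi^{bl}_i$ is of size $O(\sqrt\beta)$, and that the profile depends on the tangential variable $q'$ only through the boundary datum $\zeta(q')$. Writing $\xi=\sqrt\beta\,q_d$ and using $K^i_{kd}=0$ for $k\le d-1$, $K^i_{dd}=1$, the integrand of the first term of (\ref{Varbl1Z}) splits as
$$K^i\nabla_q\psi^{bl}_i\cdot\nabla_q\varphi=\sqrt\beta\,\big(\partial_\xi\Psi_{0,\zeta}\big)\,\partial_{q_d}\varphi+\sum_{k,r=1}^{d-1}K^i_{kr}\,\big(\partial_{q_k}\Psi_{0,\zeta}\big)\,\partial_{q_r}\varphi ,$$
the tangential sum being already of the form $R^i_1\cdot\nabla_{q'}\varphi$ with $R^i_1=(K^i)'\nabla_{q'}\Psi_{0,\zeta}(q',\xi)$; note no factor $\sqrt\beta$ is produced here, since $q'\mapsto\xi$ is constant.

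For the normal part I would integrate by parts in $q_d$. Since $\partial_{q_d}^2\psi^{bl}_i=\beta\,\partial_\xi^2\Psi_{0,\zeta}=\beta\,\Phi(\Psi_{0,\zeta})$ by the boundary-layer equation (\ref{Inner3}), the resulting $O(\beta)$ contribution cancels exactly against $\beta\int\Phi(\psi^{bl}_i)\varphi\,J^i\,dq$, leaving only the first-order term $-(\partial_{q_d}J^i)\sqrt\beta\,(\partial_\xi\Psi_{0,\zeta})\,\varphi$, which is $J^iR^i_2\varphi$ with $R^i_2=-(J^i)^{-1}(\partial_{q_d}J^i)\sqrt\beta\,\partial_\xi\Psi_{0,\zeta}$. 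Integrating in addition the tangential flux $R^i_1$ by parts turns $R^i_2$ into $R^i_3=R^i_2-(J^i)^{-1}\div_{q'}(J^i R^i_1)$ and produces a boundary flux $\sigma^i_1$, built from $R^i_1$, on $\p\mathbf{q}(W_i)\setminus\mathbf{q}(S\cap\overline W_i)$. The boundary term generated on $\mathbf{q}(S\cap\overline W_i)$ by the $q_d$-integration by parts equals $-J^i\,\partial_{q_d}\psi^{bl}_i\,\varphi=-\sqrt\beta\,J^i\,(\partial_\xi\Psi_{0,\zeta})(q',0)\,\varphi$, i.e. the effective surface charge of the Dirichlet layer (the Grahame relation); it disappears because $\varphi$ vanishes on $\mathbf{q}(S\cap\overline W_i)$ by hypothesis, and the $q'$-component of the normal also vanishes there, so $\sigma^i_1$ receives no contribution from that face.

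It then remains to bound the remainders, for which the only input besides the boundedness of $J^i$, $K^i$ and their first and second derivatives is the exponential decay in $\xi$ of $\Psi_{0,\zeta}$ and of its $\xi$- and $q'$-derivatives. The decay of $\Psi_{0,\zeta}$ and $\partial_\xi\Psi_{0,\zeta}$ is precisely (\ref{Decay}) of Proposition \ref{Bdrylay}; that of the tangential derivatives follows by differentiating the Cauchy problem (\ref{Cauchy}) with respect to the parameter $\zeta=\zeta(q')$ — the derivative $\partial_\zeta\Psi_{0,\zeta}$ solves the linear equation $-\partial_\xi^2 w+\Phi'(\Psi_{0,\zeta})w=0$ with $\Phi'\ge C_0>0$, which forces exponential decay of its decaying branch — together with the smoothness $\zeta\in C^\infty_\#(S)$. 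Since each remainder is a bounded function times such a decaying profile evaluated at $\xi=\sqrt\beta\,q_d$, the $L^\infty$ bounds are immediate, while in $L^r(\mathbf{q}(W_i))$ the substitution $\xi=\sqrt\beta\,q_d$ gains a factor $\beta^{-1/(2r)}$; this yields $\sqrt\beta\,\|R^i_1\|_{L^r}\le C\beta^{(1-1/r)/2}$ and $\|R^i_2\|_{L^r},\|R^i_3\|_{L^r}\le C\beta^{(1-1/r)/2}$, together with the exponential (hence in particular $O(\beta^{-1/2})$) smallness of $\sigma^i_1$, since the remaining faces stay at a distance bounded away from zero from $S$.

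The step I expect to require the most care is twofold. First, the bookkeeping in the successive integrations by parts (in $q_d$, then tangentially), so that every boundary contribution is correctly placed either on $\mathbf{q}(S\cap\overline W_i)$, where it vanishes, or on the remaining faces, where it becomes $\sigma^i_1$, and so that the factors $J^i$ are tracked consistently with the measure $J^i\,dq$ on the right-hand side. Second, and more substantially, establishing the exponential decay of the tangential derivatives of $\Psi_{0,\zeta}$, that is, the smooth dependence of the nonlinear boundary-layer profile on its boundary value with exponentially decaying derivatives, uniformly in $q'\in S$; all the quantitative estimates on $R^i_1,R^i_2,R^i_3,\sigma^i_1$ reduce to this decay together with the width-$\beta^{-1/2}$ concentration of the layer.
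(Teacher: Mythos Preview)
Your approach is exactly the one the paper takes---the paper's own proof consists of the single sentence ``By direct calculation, using the estimate (\ref{Decay}). We note that the higher derivatives of $\Psi_{0,\zeta}$ with respect to $q'$ satisfy also the estimate (\ref{Decay}).'' Your expansion of this into an explicit integration by parts in $q_d$ (using (\ref{Inner3}) to cancel the $O(\beta)$ terms) followed by a tangential integration by parts, and your identification of the need for exponential decay of $\partial_{q'}\Psi_{0,\zeta}$ via differentiation of (\ref{Cauchy}) in the parameter $\zeta$, are all correct and in fact make the paper's argument precise.

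There is, however, one genuine slip in your treatment of $\sigma^i_1$. You assert that ``the remaining faces stay at a distance bounded away from zero from $S$'', and from this conclude that $\sigma^i_1$ is exponentially small. This is not true: $W_i=Y_F\cap\mathcal{N}(y_i^0)$, so $\partial W_i\setminus(S\cap\overline W_i)$ includes the \emph{lateral} faces of the chart neighborhood, and those faces run all the way down to $S$ (i.e.\ to $q_d=0$). On such a face the tangential flux $R^i_1=(K^i)'\nabla_{q'}\Psi_{0,\zeta}(q',\sqrt\beta\,q_d)$ is of order $1$ near $q_d=0$, so $\|\sigma^i_1\|_{L^\infty}$ is only $O(1)$, not exponentially small. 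This is consistent with (\ref{Rest1Z}), which merely asserts $\sqrt\beta\,\|\sigma^i_1\|_{L^\infty}\le C\sqrt\beta$. The smallness you need for (\ref{Rest2Z}) comes instead from the $L^r$-type integration in $q_d$ along the lateral face (the profile $e^{-c\sqrt\beta\,q_d}$ has $L^r$-mass $O(\beta^{-1/(2r)})$ in $q_d$), not from any pointwise decay. So the estimate mechanism for $\sigma^i_1$ is the same thin-layer concentration argument you already use for $R^i_1$, and your justification for it should be corrected accordingly.
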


\begin{proof}
By direct calculation, using the estimate (\ref{Decay}). We note that the higher derivatives of $\Psi_{0, \zeta }$ with respect to $q'$ satisfy also the estimate (\ref{Decay}).
\end{proof}

\begin{corollary}
For all $\varphi \in H^1(W_i)$ such that $\varphi =0$ on $S\cap \p W_i$,
we have
\begin{gather}
\int_{W_i} \nabla_y \psi^{bl}_i \cdot \nabla_y \varphi \, dy +
\beta \int_{W_i} \Phi (\psi^{bl}_i) \varphi \, dy
= \int_{W_i} R^i_1 \nabla_y \varphi \, dy \notag \\
+ \int_{W_i} R^i_2 \varphi \, dy = \int_{W_i} R^i_3  \varphi \, dy
+ \int_{\p W_i \setminus (S\cap {\overline W}_i)} \sigma^i_1  \varphi  \, dS,
\label{Varbl2Z}
\end{gather}
where redefined reminders $\sigma^i_1$, $R^i_j$, $j=1,2,3$, satisfy (\ref{Rest1Z})-(\ref{Rest2Z}).
\end{corollary}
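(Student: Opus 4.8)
The plan is to obtain (\ref{Varbl2Z}) by transporting the identity (\ref{Varbl1Z}) of the preceding Proposition from the $q$-coordinates back to the $y$-coordinates through the local diffeomorphism $y\mapsto q=q(y)$ of Subsection~\ref{Geo}, and then to check that this transport does not spoil the bounds (\ref{Rest1Z})--(\ref{Rest2Z}). Recall that on $W_i$ the map $y\mapsto q$ is a $C^2$-diffeomorphism onto $q(W_i)$, with Jacobian $J^i>0$ and metric matrix $K^i$; by the $C^3$-regularity of $S$, the quantities $J^i$, $(J^i)^{-1}$, $K^i$ and the Jacobian matrices $(\partial q_k/\partial y_j)$, $(\partial y_j/\partial q_k)$ are all bounded in $C^1(\overline{W}_i)$. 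Writing $\tilde u = u\circ y(\cdot)$ for the pull-back, one has $K^i=(Dq)(Dq)^{T}$, hence the pointwise identity $\nabla_y u\cdot\nabla_y v = K^i\nabla_q\tilde u\cdot\nabla_q\tilde v$, so that $\int_{W_i}\nabla_y u\cdot\nabla_y v\,dy=\int_{q(W_i)}K^i\nabla_q\tilde u\cdot\nabla_q\tilde v\,J^i\,dq$, while for scalars $\int_{q(W_i)}f\,J^i\,dq=\int_{W_i}(f\circ q)\,dy$ and similarly on the relevant pieces of the boundary.

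First I would reduce to smooth $\varphi$ vanishing on $S\cap\partial W_i$ by density in the corresponding closed subspace of $H^1(W_i)$, so that (\ref{Varbl1Z}) applies with test function $\tilde\varphi=\varphi\circ y(\cdot)$. Applying the above change-of-variables identities to each term: the two terms on the left of (\ref{Varbl1Z}) become exactly $\int_{W_i}\nabla_y\psi^{bl}_i\cdot\nabla_y\varphi\,dy$ and $\beta\int_{W_i}\Phi(\psi^{bl}_i)\varphi\,dy$; the term $\int_{q(W_i)}R^i_1\cdot\nabla_{q'}\tilde\varphi\,J^i\,dq$ becomes $\int_{W_i}R^i_1\cdot\nabla_y\varphi\,dy$, the redefined $R^i_1$ being a linear combination — with $C^1(\overline{W}_i)$-bounded coefficients built from $J^i$ and $(\partial y/\partial q)$ — of the \emph{tangential} components of the old $R^i_1$, so that, since only $\nabla_{q'}$ and not $\partial/\partial q_d$ enters, no factor $\sqrt{\beta}$ is created; the terms tested against $\tilde\varphi$ give the redefined $R^i_2$, $R^i_3$; and the boundary integral $\int_{q(\partial W_i\setminus(S\cap\overline{W}_i))}\sigma^i_1\tilde\varphi\,dq'$ becomes $\int_{\partial W_i\setminus(S\cap\overline{W}_i)}\sigma^i_1\varphi\,dS$, the redefined density differing from the old one by the (bounded, bounded below) surface-measure Jacobian of the map on that face. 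The hypothesis $\varphi=0$ on $S\cap\partial W_i$ is what is used here to rule out any surface term over $q(S\cap W_i)$: on that face ($q_d=0$) the conormal for the $q'$-integration by parts has no $q'$-component, and, above all, the normal derivative $\partial_{q_d}\psi^{bl}_i|_{q_d=0}=\sqrt{\beta}\,\Psi_{0,\zeta}'(q',0)$, which is $O(\sqrt{\beta})$ and in general nonzero (since $\psi^{bl}_i$ carries a Dirichlet, not a Neumann, condition on $S$), is killed against $\varphi$.

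It then remains to verify (\ref{Rest1Z})--(\ref{Rest2Z}) for the redefined remainders. The $L^\infty$-bounds are immediate, as each redefined quantity equals the old one times $C^1(\overline{W}_i)$-bounded factors. For the $L^r$-bounds one uses that the diffeomorphism distorts Lebesgue measure by bounded factors only and — since $q_d=\mathrm{dist}(y,S)$ — carries the layer $\{0<q_d<c\beta^{-1/2}\}$ onto a comparable neighbourhood of $S$; since by (\ref{Decay}), together with the exponential decay of the $q'$-derivatives of $\Psi_{0,\zeta}$ recorded in the preceding Proposition, every remainder is, up to an exponentially small tail, supported in that layer, the powers of $\beta$ in (\ref{Rest2Z}) are unchanged. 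The one step that genuinely requires care — and the main obstacle — is the tangential/normal bookkeeping: one must check that under $y\mapsto q$ the redefined $R^i_1$ and $\sigma^i_1$ still contract against a tangential gradient only, so that they keep the extra factor $\sqrt{\beta}$ appearing in (\ref{Rest1Z})--(\ref{Rest2Z}), whereas $R^i_2$, $R^i_3$, which absorb the $O(\sqrt{\beta})$ normal derivative of $\psi^{bl}_i$, carry the weaker weight; and that the two equalities in (\ref{Varbl1Z}) — the second of which already contains the integration by parts in $q'$ performed at the level of the $q$-variables — transport consistently with the $J^i$-weights, so that no spurious surface or volume term appears.
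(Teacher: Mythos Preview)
Your argument is correct and is precisely the change-of-variables computation that the paper leaves implicit: in the paper this result is stated as a corollary with no proof, the understanding being that it follows from the preceding Proposition by pulling back the identity (\ref{Varbl1Z}) along the local diffeomorphism $y\mapsto q$. Your write-up spells out exactly this, including the one point that actually matters --- that $R^i_1$ is contracted only against $\nabla_{q'}$, so under the change of variables it picks up bounded Jacobian factors but no extra $\sqrt{\beta}$, and hence retains the sharper weight in (\ref{Rest1Z})--(\ref{Rest2Z}).
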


Using the definition of $\psi^{bl}_i$, we see that $\psi^{bl}_i = \psi^{bl}_j$
on $W_i \cap W_j$, when $W_i \cap W_j$ is nonempty. Now let $\psi^{bl} =\psi^{bl}_i$ on $W_i$.
As in Subsection \ref{REE}, we make a smooth extension of $\psi^{bl}$ into
$Y_F \setminus {\overline Y}_F^\mu$. In $Y_F \setminus {\overline Y}_F^\mu $,
$\psi^{bl}$, together with its derivatives, is exponentially small with respect
to $1/ \beta$. We simply ignore the exponentially small terms in our estimates.
We have

\begin{proposition}
For any $\varphi \in H^1(Y_F)$ such that $\varphi =0$ on $S$, we have
\begin{gather}
\int_{Y_F}  \nabla (\Psi_\beta -  \psi^{bl}) \cdot \nabla \varphi \ dy + \beta  \int_{Y_F} (\Phi (\Psi_\beta ) - \Phi ( \psi^{bl}) ) \varphi \ dy =\int_{Y_F } R^1_\beta \nabla \varphi \ dy \notag \\
+ \int_{Y_F } R^2_\beta \varphi \ dy = \int_{Y_F} R^3_\beta  \varphi  \, dy
+ \sum_{i=1}^M\int_{\p W_i \setminus (S\cap {\overline W}_i)} \sigma^i_\beta  \varphi  \, dS,
\label{Varbl3Z}
\end{gather}
where the global reminders $R_\beta^j$, $j=1,2,3$ and $\sigma_\beta^i$, $i=1, \dots , M$
satisfy
\begin{gather}
\sqrt{\beta} (\| R_\beta^1 \|_{L^r(Y_F)} + \| \sigma_\beta^{i} \|_{L^r(\p W_i \setminus (S\cap {\overline W}_i))}) + \notag \\
 \| R_\beta^j \|_{L^r(Y_F)} \leq C\beta^{(1-1/r)/2} , \; \forall r\in [1, +\infty ], \,  j=2,3 , \, i=1, \dots, M .\label{Rest3Z}
\end{gather}
\end{proposition}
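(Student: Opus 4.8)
The plan is to reproduce, in the Dirichlet setting, the argument that led from the local identities (\ref{Varbl2}) to the global one (\ref{Varbl3}), with the local identities now being those of the corollary containing (\ref{Varbl2Z}) and the admissible test functions being the $\varphi\in H^1_\#(Y_F)$ vanishing on $S$. First I would record the weak formulation of (\ref{BP1Z}): for every such $\varphi$,
$$\int_{Y_F}\nabla\Psi_\beta\cdot\nabla\varphi\,dy+\beta\int_{Y_F}\Phi(\Psi_\beta)\varphi\,dy=0 .$$
Then I would take the partition of unity $\{\zeta_i\}_{i=0}^M$ subordinate to the cover $\{W_i\}_{i=0}^M$ from Subsection \ref{REE}, with $W_0\subset\subset Y_F$ chosen so that $\psi^{bl}$, together with its derivatives, is exponentially small in $1/\sqrt\beta$ on $W_0$ (recall $Y_F^\mu\subset\bigcup_{i\ge1}W_i$, so $W_0$ can be placed where the boundary layer has already decayed). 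Writing $\varphi=\sum_{i=0}^M\zeta_i\varphi$, each $\zeta_i\varphi$ vanishes on $S$, so it is an admissible test function in the local identity (\ref{Varbl2Z}) on $W_i$.

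The core computation is to insert $\zeta_i\varphi$ into (\ref{Varbl2Z}), expand $\nabla(\zeta_i\varphi)=\zeta_i\nabla\varphi+\varphi\nabla\zeta_i$, and sum over $i$. Since the local boundary-layer profiles coincide on overlaps and equal the globally defined $\psi^{bl}$ on each $W_i$ (up to exponentially small terms where $W_i$ leaves $Y_F^\mu$), the leading Dirichlet-form terms reassemble, via $\sum_i\zeta_i\equiv1$, into $\int_{Y_F}\nabla\psi^{bl}\cdot\nabla\varphi\,dy$ and the reaction terms into $\beta\int_{Y_F}\Phi(\psi^{bl})\varphi\,dy$; the gradient cross-terms cancel,
$$\sum_{i=0}^M\int_{W_i}\varphi\,\nabla\psi^{bl}_i\cdot\nabla\zeta_i\,dy=\int_{Y_F}\varphi\,\nabla\psi^{bl}\cdot\nabla\Big(\sum_{i=0}^M\zeta_i\Big)\,dy=0 .$$
Subtracting the resulting identity from the weak formulation above produces (\ref{Varbl3Z}) with (up to signs absorbed in the definitions) $R^1_\beta=\sum_i\zeta_i R^i_1$, $\sigma^i_\beta=\zeta_i\sigma^i_1$, $R^3_\beta=\sum_i\zeta_i R^i_3$, and $R^2_\beta=\sum_i(\zeta_i R^i_2+R^i_1\cdot\nabla\zeta_i)$, the last summand being the zeroth-order piece produced by the product rule; the two equivalent forms of the right-hand side of (\ref{Varbl3Z}) come from summing the two forms already present in (\ref{Varbl2Z}). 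The $W_0$-contributions, and the discrepancy along the seam $\p Y_F^\mu$, are exponentially small and are absorbed into the remainders.

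For the bounds (\ref{Rest3Z}) I would argue termwise. The cover is finite and $\zeta_i,\nabla\zeta_i$ are bounded, so $\sqrt\beta\|R^1_\beta\|_{L^r(Y_F)}$ inherits the $O(\beta^{(1-1/r)/2})$ bound from (\ref{Rest1Z})--(\ref{Rest2Z}), and likewise $\sqrt\beta\|\sigma^i_\beta\|_{L^r}$ — here one passes from the $L^\infty$-bound on $\sigma^i_1$ to an $L^r$-bound using that $\p W_i\setminus(S\cap\overline W_i)$ has finite $(d-1)$-dimensional measure. The extra terms $R^i_1\cdot\nabla\zeta_i$ carry the $O(\beta^{-1/2})$ smallness of $R^i_1$, hence are dominated by the target bound for $R^2_\beta$, and the bounds $\|R^j_\beta\|_{L^r(Y_F)}\le C\beta^{(1-1/r)/2}$ for $j=2,3$ then follow directly by summation. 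I do not expect a deep obstacle: once the decomposition is set up, the analytic content reduces to an integration by parts and finitely many triangle inequalities. The genuinely delicate points are all of bookkeeping type — verifying that $\psi^{bl}$ is a single, globally $H^1$ (in fact smooth) function, which rests on the intrinsic character of the coordinates $(q',q_d)$ and of the datum $\zeta$ on $S$ so that the pieces $\psi^{bl}_i$ really match on $W_i\cap W_j$; controlling the transition to the exponentially small smooth extension on $Y_F\setminus\overline{Y_F^\mu}$ exactly as in Subsection \ref{REE}; and carrying both the divergence-form remainder $R^1_\beta$ and the purely zeroth-order remainder $R^3_\beta$, together with the explicit interface terms $\sigma^i_\beta$, consistently through the summation, keeping in mind that their $\beta$-scalings differ by a factor $\sqrt\beta$.
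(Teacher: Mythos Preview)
Your proposal is correct and is precisely the natural argument implicit in the paper, which states this proposition without proof as an immediate consequence of the local corollary (\ref{Varbl2Z}) once the $\psi^{bl}_i$ have been patched into a single global $\psi^{bl}$. Note only that in your construction the interface terms $\sigma^i_\beta=\zeta_i\sigma^i_1$ actually vanish, since the partition-of-unity functions satisfy $\zeta_i=0$ on $\partial W_i\setminus(S\cap\overline{W_i})$; this is consistent with --- and a fortiori implies --- the stated bound (\ref{Rest3Z}).
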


\begin{theorem}
\label{estlargebetZ}
Let $\Psi_\beta$ be given by (\ref{BP1Z}) and $\psi^{bl}$ by (\ref{BLfZ}).
Then we have the following behavior for large $\beta$:
\begin{gather}
\| \Psi_\beta - \psi^{bl} \|_{L^1 (Y_F)} \leq \frac{C}{\beta} , \label{L1M0Z} \\
\| \Psi_\beta -  \psi^{bl}\|_{L^2 (Y_F)} \leq \frac{C}{\beta^{1/2}} \label{L20Z} \\
\| \Psi_\beta -  \psi^{bl} \|_{H^1 (Y_F)} \leq C . \label{LH1M0Z}
\end{gather}
\end{theorem}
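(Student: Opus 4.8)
The plan is to mimic the argument used for Lemma~\ref{thm.estim} and Theorem~\ref{estlargebet}, with the variational identity~(\ref{Varbl3}) replaced by its Dirichlet counterpart~(\ref{Varbl3Z}). Two structural facts will carry the whole proof: the monotonicity of $\Phi$ and its uniform strict convexity $\Phi'(x)\geq C_0+C_2>0$, which follows from~(\ref{Ineqderiv}) in Lemma~\ref{Ineq1}. One should first record that, unlike in Section~\ref{Lb}, the boundary layer $\psi^{bl}=\Psi_{0,\zeta}(q',\sqrt\beta\,q_d)$ is of size $O(1)$ near $S$ (it equals $\zeta$ on $S$), so no Taylor expansion of $\Phi$ around $0$ is available; everything must be done with $\Phi$ globally. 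I would also note at the outset that $\Psi_\beta$ is bounded in $L^\infty(Y_F)$ uniformly in $\beta$ — by the maximum principle applied to~(\ref{BP1Z}), comparing $\Psi_\beta$ with the constants $\max(0,\max_S\zeta)$ and $\min(0,\min_S\zeta)$ — while $\psi^{bl}$ is bounded by $\|\zeta\|_{L^\infty(S)}$ thanks to~(\ref{Decay}); hence $\Phi(\Psi_\beta)$ and $\Phi(\psi^{bl})$ are bounded. Setting $v_\beta:=\Psi_\beta-\psi^{bl}$, one has $\Psi_\beta=\psi^{bl}=\zeta$ on $S$, so $v_\beta$ and any bounded Lipschitz function of $v_\beta$ vanish on $S$ and are admissible test functions in~(\ref{Varbl3Z}).

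For the $L^1$-bound~(\ref{L1M0Z}) I would test~(\ref{Varbl3Z}) with $\varphi=s_\delta(v_\beta)$, where $s_\delta$ is a smooth nondecreasing odd regularization of $\mathrm{sign}$ with $|s_\delta|\leq1$, using the \emph{second} (integrated-by-parts) form of the right-hand side. The Dirichlet term becomes $\int_{Y_F}s_\delta'(v_\beta)|\nabla v_\beta|^2\,dy\geq0$ and is discarded; the nonlinear term $\beta\int_{Y_F}(\Phi(\Psi_\beta)-\Phi(\psi^{bl}))s_\delta(v_\beta)\,dy$ tends, as $\delta\to0$, by dominated convergence (using the $L^\infty$-bounds), to $\beta\int_{Y_F}|\Phi(\Psi_\beta)-\Phi(\psi^{bl})|\,dy$; and the right-hand side is bounded uniformly in $\delta$ by $\|R^3_\beta\|_{L^1(Y_F)}+\sum_{i=1}^M\|\sigma^i_\beta\|_{L^1(\p W_i\setminus(S\cap\overline{W}_i))}\leq C$, by~(\ref{Rest3Z}) with $r=1$. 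Since~(\ref{Ineqderiv}) yields $|\Phi(a)-\Phi(b)|\geq(C_0+C_2)|a-b|$, this gives $\beta\|v_\beta\|_{L^1(Y_F)}\leq C$, which is~(\ref{L1M0Z}).

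For~(\ref{L20Z})--(\ref{LH1M0Z}) I would test~(\ref{Varbl3Z}) with $\varphi=v_\beta$ itself, this time using the \emph{first} form of the right-hand side (no boundary term). The mean value theorem together with $\Phi'\geq C_0+C_2$ bounds the nonlinear term below by $\beta(C_0+C_2)\|v_\beta\|_{L^2(Y_F)}^2$, so one arrives at
\[
\int_{Y_F}|\nabla v_\beta|^2\,dy+\beta(C_0+C_2)\|v_\beta\|_{L^2(Y_F)}^2\leq\Big|\int_{Y_F}R^1_\beta\cdot\nabla v_\beta\,dy\Big|+\Big|\int_{Y_F}R^2_\beta\,v_\beta\,dy\Big|.
\]
The first term on the right I would absorb by Young's inequality into $\tfrac{1}{2}\|\nabla v_\beta\|_{L^2(Y_F)}^2+C\|R^1_\beta\|_{L^2(Y_F)}^2$, and~(\ref{Rest3Z}) with $r=2$ gives $\|R^1_\beta\|_{L^2(Y_F)}^2\leq C\beta^{-1/2}$; the second term I would bound by $\|R^2_\beta\|_{L^\infty(Y_F)}\|v_\beta\|_{L^1(Y_F)}\leq C\beta^{1/2}\cdot C\beta^{-1}=C\beta^{-1/2}$ using the previous step and~(\ref{Rest3Z}) with $r=+\infty$ (one could instead absorb it into $\beta(C_0+C_2)\|v_\beta\|_{L^2}^2$ by weighted Young, since $\|R^2_\beta\|_{L^2}^2\leq C\beta^{1/2}$). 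This leaves $\|\nabla v_\beta\|_{L^2(Y_F)}^2+\beta\|v_\beta\|_{L^2(Y_F)}^2\leq C\beta^{-1/2}$, whence~(\ref{L20Z}) and~(\ref{LH1M0Z}) (in fact with some room to spare).

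The step I expect to be the real obstacle is the treatment of the remainder term $\int R^1_\beta\cdot\nabla\varphi$, which has no analogue in Theorem~\ref{estlargebet}: it comes from the tangential variation of the imposed potential $\zeta=\zeta(q')$ along $S$, and because $\psi^{bl}$ is $O(1)$ near $S$, $\|R^1_\beta\|_{L^\infty}$ is only $O(1)$ and cannot be thrown away. The way around it is twofold — in the $L^1$-estimate one is forced onto the integrated-by-parts form of~(\ref{Varbl3Z}) so that a remainder is paired with $\varphi$ rather than with $\nabla\varphi$, at the price of the interior-surface terms $\sigma^i_\beta$, which are harmless because they carry the extra factor $\beta^{-1/2}$; in the energy estimate $\int R^1_\beta\cdot\nabla v_\beta$ is absorbed into the Dirichlet energy, which is affordable precisely because $R^1_\beta$ is of size $\beta^{-1/2}$ in $L^2$. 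The remaining technical points — justifying the $\delta\to0$ passage from the uniform $L^\infty$-bounds, and checking the admissibility (vanishing on $S$) of the test functions — are routine.
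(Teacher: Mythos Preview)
Your argument is correct and follows essentially the same two-step strategy as the paper: test (\ref{Varbl3Z}) with a regularized $\mathrm{sign}(v_\beta)$ to obtain the $L^1$-bound, then with $v_\beta$ itself for the energy estimate. Your bookkeeping is in fact slightly cleaner than the paper's---by pairing $R^2_\beta$ with $v_\beta$ in $L^\infty$--$L^1$ (using the $L^1$-bound just obtained) and absorbing $R^1_\beta\cdot\nabla v_\beta$ via Young, you actually get $\|\nabla v_\beta\|_{L^2}^2+\beta\|v_\beta\|_{L^2}^2\le C\beta^{-1/2}$, which is sharper than the stated (\ref{L20Z})--(\ref{LH1M0Z}).
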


\begin{proof}
First we test the variant of (\ref{Varbl3Z}) involving boundary terms by the
regularized sign of $(\Psi_\beta - \psi^{bl})$.
After passing to the regularization parameter limit, we get
$$
\beta \int_{Y_F} (\Phi (\Psi_\beta ) - \Phi ( \psi^{bl}))
\mbox{ sign } (\Psi_\beta - \psi^{bl} ) \, dy \leq C,
$$
and after applying a slight generalization of the inequality (\ref{Ineqsign})
we get (\ref{L1M0Z}).

Next we use the variant of (\ref{Varbl3Z}) involving only volume terms and
test (\ref{Varbl3Z}) by $g_\beta =\Psi_\beta - \psi^{bl}$. It yields
\begin{gather}
\int_{Y_F} | \nabla( \Psi_\beta - \psi^{bl})|^2 \, dy
+ \beta \int_{Y_F} (\Phi (\Psi_\beta ) - \Phi ( \psi^{bl} ))
(\Psi_\beta - \psi^{bl}) \, dy \leq \notag \\
C\beta^{1/4} \max_i \| g_\beta \|_{L^2 (\p W_i \cap \mathcal{T})}
+ C \beta^{1/4} \| g_\beta \|_{L^2 (Y_F)} \notag \\
\leq \frac{\beta \min \Phi'}{2} \| g_\beta \|_{L^2 (Y_F)}^2
+ \frac{1}{2} \| \nabla g_\beta \|_{L^2 (Y_F)}^{2} +C .
\label{Energ2Z}
\end{gather}
For $\beta \geq \beta_0$, (\ref{L20Z})-(\ref{LH1M0Z}) is a direct consequence of (\ref{Energ2Z}).
\end{proof}
\begin{remark}
By the same arguments as in Section \ref{Lb}, one can obtain, in addition to the inequalities of Theorem~\ref{estlargebetZ}, pointwise estimates for $\Psi_\beta$ and for the discrepancy $\Psi_\beta- \psi^{bl}$. In the case of Dirichlet boundary condition these estimates read, for $\beta\ge 1$,
\begin{equation*}
|\Psi_\beta(y)|\le C_1\exp\big\{-C_2\sqrt{\beta}\,
\mathrm{dist}(y,S)\big\},
\end{equation*}
\begin{equation*}
|\nabla\Psi_\beta(y)|\le C_1\sqrt{\beta}\exp\big\{-C_2\sqrt{\beta}\,
\mathrm{dist}(y,S)\big\},
\end{equation*}
\begin{equation*}
|\Psi_\beta(y)- \psi^{bl}(y)|\le \frac{C_1}{\sqrt{\beta}}\exp\big\{-C_2\sqrt{\beta}\,
\mathrm{dist}(y,S)\big\},
\end{equation*}
where $C_1$ and $C_2$ are positive constants, independent of $\beta$. 
\end{remark}

\end{document}